\documentclass[10pt]{amsart}
\usepackage{verbatim}
\usepackage{amsmath,amsfonts,amscd,amssymb,epsfig,euscript,bm}
\usepackage{amsxtra,mathrsfs}
\usepackage{pdfsync}
\frenchspacing
\newtheorem{theorem}{Theorem}[section]
\newtheorem{proposition}{Proposition}[section]
\newtheorem{lemma}{Lemma}[section]
\newtheorem{corollary}[theorem]{Corollary}
\theoremstyle{definition}
\newtheorem{definition}{Definition}{}

\theoremstyle{remark} 
\newtheorem{remark}{Remark}[section]

\newcommand{\hth}{\hat{\theta}}

\newcommand{\spbp}{\sqrt{-1}\partial \bar{\partial}}

\newcommand{\vep}{\varepsilon}
\newcommand{\al}{\alpha}
\newcommand{\la}{\lambda}

\newcommand{\ddbar}{\sqrt{-1}\partial \bar\partial}
 
\newcommand{\be}{\beta}
\newcommand{\CC}{\mathbb{C}}
\newcommand{\RR}{\mathbb{R}}
\newcommand{\hvarphi}{\hat\varphi}
\numberwithin{equation}{section}
\usepackage{hyperref}
\hypersetup{colorlinks,citecolor=blue,plainpages=false,hypertexnames=false}
\begin{document}
\title[A numerical criterion for generalised Monge-Amp\`ere equations]{A numerical criterion for generalised Monge-Amp\`ere equations on projective manifolds}
\author{Ved V. Datar}
\address{Department of Mathematics, Indian Institute of Science, Bangalore, India - 560012}
\email{vvdatar@iisc.ac.in}
\author{Vamsi Pritham Pingali}
\address{Department of Mathematics, Indian Institute of Science, Bangalore, India - 560012}
\email{vamsipingali@iisc.ac.in}
\begin{abstract} 
We prove that generalised Monge-Amp\`ere equations (a family of equations which includes the inverse Hessian equations like the $J$-equation, as well as the Monge-Amp\`ere equation) on projective manifolds have smooth solutions if certain intersection numbers are positive.  As corollaries of our work, we improve a result of Chen (albeit in the projective case) on the existence of solutions to the $J$-equation, and prove a conjecture of Sz\'ekelyhidi in the projective case on the solvability of certain inverse Hessian equations. The key new ingredient in improving Chen's result is a degenerate concentration of mass result. We also prove an equivariant version of our results, albeit under the assumption of uniform positivity. In particular, we can recover existing results on manifolds with large symmetry such as projective toric manifolds. 
\end{abstract}
\maketitle
\section{Introduction}\label{Introsec}

\indent The Monge-Amp\`ere equation and related PDE such as the J-equation \cite{Donaldson, XXchen, SongWeinkove, Weinkove, Weinkoveearlier, collinsgabor, gaborlejmi, FangLaiMa, songweinkoveb, Xiaojian, Xiao, rui, yao, hashi, songd} and the deformed Hermitian Yang-Mills (dHYM) equation \cite{jacobyau, collinsjacobyau, collinsxieyau, jacob, yama, pingthree, han, hanyama, pindhym, gchen,  Stoppa, hanjintwo, tak1, tak2, tak3, kawai} play an important role in K\"ahler geometry. Very often, their solvability hinges on the existence of a subsolution (which, in a sense, functions as a ``barrier"). Unlike the Monge-Amp\`ere equation, which can be solved in every K\"ahler class \cite{Yau}, other equations have complicated necessary conditions that are almost as hard to verify as solving the equations themselves. To remedy this problem, numerical criteria akin to the Nakai-Moizeshon criterion for projective manifolds (and the Demailly-Paun \cite{dp} criterion in the K\"ahler case) have been conjectured \cite{gabor, collinsjacobyau} to be sufficient for the existence of smooth solutions. Results supporting such conjectures had been proven earlier under some symmetry assumptions (i.e., the toric case) \cite{collinsgabor, pindhym}, but recently, building on the work of Demailly and Paun \cite{dp}, Chen \cite{gchen} proved far more general results in the case of the J-equation and the dHYM equation. The aim of this paper is to prove similar results on projective manifolds for generalised Monge-Amp\`ere equations \cite{Pingen} - a family of PDE which includes the Monge-Amp\`ere equations, the inverse Hessian equations, and special cases of the dHYM equations.\\
\indent The setting is as follows. Let $(M,\chi)$ be a projective K\"ahler manifold and let $[\Omega_0]$ be a K\"ahler class on $M$. Let $c_k \geq 0, \ 1\leq k \leq n-1$ be constants and $f$ be a smooth function such that either
\begin{align*}
\sum_{k=1}^{n-1}c_k=0,~ f>0,
\end{align*} 
or
\begin{align*}
\sum_{k=1}^{n-1}c_k>0,~ \int_M f \chi^n \geq 0.
\end{align*} 
Assume that the following condition is met.
\begin{equation}\label{integralcondition}
\displaystyle \int_M [\Omega_0]^n = \sum_{k=1}^{n-1} \int_M c_k [\chi]^{n-k} [\Omega_0]^{k}+ \int_Mf \chi^n. 
\end{equation}
Here, and in what follows, we will suppress the wedge product to condense notation. That is, if $\al$ and $\be$ are two forms or cohomology classes, then we denote $\al\wedge\be$ by  $\al\be$. Furthermore, we assume that 
\begin{equation}\label{eqn:f}
f>f_m=- \min  \left (\frac{1}{16n}\left ( \frac{\zeta c_{\zeta}}{2n} \right )^{\frac{\zeta}{n-\zeta}} \frac{c_{\zeta} (n-\zeta)}{2n},  \frac{\zeta c_{\zeta}^2}{4n}  , \frac{c_{\zeta}^{\frac{n}{n-\zeta}}}{4n},\frac{\displaystyle \int_M [\Omega_0]^n}{4 \int_M [\chi]^n} , \frac{K}{2n}\left ( \frac{c_{\zeta}}{2 {n \choose \zeta}}\right )^{\frac{n}{n-\zeta}} \right ),
\end{equation} where  $1\leq \zeta \leq n-1$ is the largest integer such that $c_{\zeta} \neq 0,$  and $1>K>0$ is any constant less than the smallest eigenvalue of the matrix $\sum_{\vert I\vert =\zeta}E_I$ where $I$ is a multi-index and $(E_I)_{ij}=1$ if $i,j \notin I$ and $0$ otherwise. Our main result is  the  following.  
 
\begin{theorem}\label{thm:stability}
Let  $M$ be a projective connected $n$-dimensional manifold. Let $\chi$ be a fixed K\"ahler metric, and $[\Omega_0]$ be a K\"ahler class. Furthermore, let $f:M\rightarrow \RR$ be a smooth function and $\{c_k\}_{k=1}^{n-1}$ constants satisfying the  conditions above. Then the following conditions are equivalent. 
\begin{enumerate}
\item The generalised Monge-Amp\`ere equation
\begin{gather}
\Omega^n = \sum_{k=1}^{n-1} c_k \chi^{n-k} \Omega^{k}+f \chi^n,
\label{genmaequation}
\end{gather}
 has a unique smooth solution $\Omega\in[\Omega_0]$ satisfying the cone condition $$n\Omega^{n-1} - \sum_{k=1}^{n-1} c_k k \chi^{n-k} \Omega^{k-1} >0$$ 
\item (Cone condition) There exists a  K\"ahler metric $\Omega \in [\Omega_0]$ satisfying the cone condition, i.e., $$n\Omega^{n-1} - \sum_{k=1}^{n-1} c_k k \chi^{n-k} \Omega^{k-1} >0.$$
\item (Numerical condition) For all subvarieties $V\subset M$ of co-dimension $p$, we have $$\int_V\Big({n\choose p}[\Omega_0]^{n-p} - \sum_{k=p}^{n-1}c_k{k\choose p}[\chi^{n-k}]\cdot [\Omega_0^{k-p}]\Big) >0.$$ 
\end{enumerate} 
\label{thm:main2}
\end{theorem}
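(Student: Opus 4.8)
The plan is to prove the cycle of implications $(1)\Rightarrow(2)\Rightarrow(3)\Rightarrow(2)\Rightarrow(1)$. The implication $(1)\Rightarrow(2)$ is immediate, since a smooth solution $\Omega\in[\Omega_0]$ of \eqref{genmaequation} satisfying the cone condition is itself the metric required in (2). Thus there are three substantive steps: the elementary cohomological implication $(2)\Rightarrow(3)$, the complex-geometric implication $(3)\Rightarrow(2)$, and the analytic implication $(2)\Rightarrow(1)$.

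For $(2)\Rightarrow(3)$, let $\Omega\in[\Omega_0]$ satisfy the cone condition. I would first show that the cone condition, which is the case $p=1$, propagates to every $p$: for all $1\le p\le n$,
\begin{equation*}
{n\choose p}\Omega^{n-p}-\sum_{k=p}^{n-1}c_k{k\choose p}\chi^{n-k}\Omega^{k-p}>0
\end{equation*}
as an $(n-p,n-p)$-form. Diagonalising $\chi$ with respect to $\Omega$ at a point, with eigenvalues $\lambda_1,\dots,\lambda_n>0$, and setting $b_j:=c_{n-j}/{n\choose j}\ge 0$, positivity of the displayed form along the coordinate $(n-p)$-plane indexed by a subset $T$ of size $n-p$ amounts, after dividing by $n!/p!$, to $1-\sum_{j=1}^{n-p}b_j\,e_j(\lambda|_T)>0$, where $e_j$ is the $j$-th elementary symmetric polynomial. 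Since every $b_j\ge 0$ and $e_j(\lambda|_T)\le e_j(\lambda|_{T'})$ whenever $T\subseteq T'$, this reduces to the case $p=1$ by choosing any $(n-1)$-subset $T'\supseteq T$. Restricting these forms to a codimension-$p$ subvariety $V$ — after pulling back to a resolution $\pi\colon\widetilde V\to V$, where the pullback stays semipositive and is strictly positive on a dense open set — and integrating then yields exactly the strict inequality of (3).

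The heart of the argument is $(3)\Rightarrow(2)$, which I would carry out along the lines of Demailly--Paun \cite{dp} and Chen \cite{gchen}. The goal is to manufacture, out of positivity of the intersection numbers on all subvarieties, an actual K\"ahler metric $\Omega\in[\Omega_0]$ lying in the open cone defined by $n\Omega^{n-1}-\sum_{k=1}^{n-1}c_k k\,\chi^{n-k}\Omega^{k-1}>0$. The scheme is an induction on dimension: the numerical hypothesis is used to produce a K\"ahler current with analytic singularities along a proper subvariety $Z$, and a \emph{degenerate concentration of mass} estimate — the new ingredient, designed to work even when the intersection numbers are merely non-negative, rather than strictly positive, on some subvarieties, or when the class in play is only nef (not K\"ahler) on them — forces the pertinent Monge--Amp\`ere mass to accumulate near $Z$; the inductive hypothesis, applied to the components of $Z$ (which have strictly smaller dimension), then yields the cone condition on all of $M$. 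This is the step I expect to be the main obstacle: the mass-concentration estimates are delicate, the degenerate and boundary cases excluded by Chen's hypotheses must be treated head-on, and the constants have to be tracked carefully enough to land the resulting metric inside the cone while respecting the lower bound $f>f_m$ of \eqref{eqn:f}, whose explicit shape encodes precisely the pointwise positivity margins that this step (and the next) require.

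Finally, for $(2)\Rightarrow(1)$: with a metric satisfying the cone condition in hand, I would run the continuity method on a one-parameter deformation of \eqref{genmaequation}. Openness holds because the cone condition is exactly the condition that the linearisation of \eqref{genmaequation} — the operator $\psi\mapsto\eta\wedge\ddbar\psi$ with $\eta=n\Omega^{n-1}-\sum_{k=1}^{n-1}c_k k\,\chi^{n-k}\Omega^{k-1}$ — be elliptic along the path. Closedness reduces, via Evans--Krylov and Schauder theory, to a priori $C^0$ and complex $C^2$ estimates, which keep the solutions in a compact subset of the cone; these are obtained as in the existing theory of generalised Monge--Amp\`ere equations \cite{Pingen}, with the cone-condition metric furnishing the barrier and the bound $f>f_m$ ensuring positivity of the error terms that arise in the maximum-principle and second-derivative computations. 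Uniqueness of the solution in the cone follows from the concavity structure of the equation by a standard maximum-principle argument, and assembling the four implications completes the proof.
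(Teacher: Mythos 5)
Your overall architecture coincides with the paper's (the PDE equivalence $(1)\Leftrightarrow(2)$ by a continuity method, $(2)\Rightarrow(3)$ by pointwise positivity of the codimension-$p$ forms plus integration over $V$, and $(3)\Rightarrow(2)$ by a Demailly--Paun/Chen-type induction with mass concentration), and your $(2)\Rightarrow(3)$ monotonicity argument is correct. The problem is that $(3)\Rightarrow(2)$ --- which is the entire content of the theorem and of the paper --- is only described, not proved; you yourself flag it as ``the main obstacle'' and defer to ``the lines of \cite{dp} and \cite{gchen}.'' Concretely, what is missing is everything in Sections 3--5 of the paper. The argument is not a single mass-concentration step on $M$ followed by induction ``applied to the components of $Z$'': one first proves the \emph{uniform} version (Theorem \ref{thm:main}), in which mass is concentrated along an ample divisor $Y$ (Theorem \ref{thm:conc-mass}), the bad set $Z$ is $Y$ together with a Lelong sublevel set of the limit current, a cone-condition metric near $Z$ is built from the induction hypothesis via an (equivariant) resolution of singularities together with the delicate intersection-number estimate of Lemma \ref{num-resolution}, and the regularised current and the local metric are glued by the Richberg/Blocki--Kolodziej-type Proposition \ref{prop:gluing} with quantitative Lelong-number control (Lemma \ref{lem:largelelong}). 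The non-uniform statement is then reduced to constructing a cone-condition metric near an \emph{arbitrary} analytic subset (Proposition \ref{prop:nonuniformlocal}), and this is where the genuinely new ingredient lives: a degenerate mass-concentration result (Proposition \ref{prop:conc-massnonuniform}) carried out on a resolution $\tilde Z$, with the merely semi-positive reference form $\pi^*\chi$, perturbed classes $\tilde\Omega_s$, $\tilde\chi_s$, an ample divisor chosen (via Bertini and Fulton--Hansen) to meet every irreducible component of $Z$ in a divisor, the new notions of $(\vep,n-p)$-uniform and degenerate strong cone conditions, and a second induction on the maximal dimension of the components of $Z$. None of these steps is supplied, or even identified, in your outline, so the proposal does not establish $(3)\Rightarrow(2)$.

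There is also a secondary gap in $(2)\Rightarrow(1)$. You defer the a priori estimates to ``the existing theory of generalised Monge--Amp\`ere equations,'' but that theory covers $f\geq 0$; here $f$ is allowed to be negative down to the explicit threshold $f_m$ of \eqref{eqn:f}, and this is precisely where new analysis is required. The paper has to re-establish the structural properties of the operator along the continuity path (Lemma \ref{usefulpropslemma}), where convexity survives only in the weakened form \eqref{strongconvexity} with $\alpha=\tfrac12$ when $2\zeta<n$, and then prove a second-order estimate adapted to this weaker convexity (Proposition \ref{c2estimateprop}) before the blow-up argument, Evans--Krylov and Schauder theory can be invoked. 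Saying that ``the bound $f>f_m$ ensures positivity of the error terms'' names the issue but does not discharge it; closedness of the continuity path for negative $f$ is a substantive step that your proposal leaves unproved.
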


Note that even for the $J$-equation, this is an improvement of Gao Chen's result \cite[Theorem 1.1]{gchen}, at least for projective manifolds. Theorem \ref{thm:main2} immediately implies Conjecture 23 of Sz\'ekelyhidi \cite{gabor} (which itself generalises a conjecture of Lejmi-Sz\'ekelyhidi \cite{gaborlejmi}) for the inverse Hessian equations in the projective case. 
\begin{corollary}
Let $[\Omega_0]$ be a K\"ahler class and $\chi$ be a K\"ahler form on a projective manifold $M$. Define the constant $c:=\displaystyle \frac{\int_M [\Omega_0]^n}{\int_M [\Omega_0]^{n-k}[\chi]^k}.$ The inverse Hessian equation $$\Omega^n = c\Omega^{n-k} \wedge \chi^k$$ has a solution if and only if for every subvariety $V$ of codimension $n-k\leq p\leq n-1$, the following inequality holds.
$$ \displaystyle \int_V {n \choose p} [\Omega_0]^{n-p} - \int_V c{k \choose p} [\chi]^k [\Omega_0]^{n-p-k} >0.$$
\label{gaborcorollary}
\end{corollary}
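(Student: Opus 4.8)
The plan is to obtain Corollary~\ref{gaborcorollary} as the special case of Theorem~\ref{thm:main2} in which all but one of the constants $c_k$ vanish and the twisting function is identically zero. Concretely, I would set $c_{n-k}:=c$, $c_j:=0$ for $j\neq n-k$, and $f\equiv 0$. With this choice the generalised Monge-Amp\`ere equation \eqref{genmaequation} is exactly the inverse Hessian equation $\Omega^n=c\,\Omega^{n-k}\wedge\chi^k$, and the integer $\zeta$ appearing in \eqref{eqn:f} equals $n-k$.

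The only thing that requires genuine checking is that this data satisfies the standing hypotheses that precede Theorem~\ref{thm:main2}. Since $[\Omega_0]$ and $[\chi]$ are K\"ahler classes on the compact manifold $M$, the intersection numbers $\int_M[\Omega_0]^n$ and $\int_M[\chi]^k[\Omega_0]^{n-k}$ are both strictly positive, so $c>0$ and hence $\sum_j c_j=c>0$; the relevant branch of the dichotomy then only demands $\int_M f\,\chi^n\geq 0$, which holds with equality since $f\equiv 0$. The integral identity \eqref{integralcondition} becomes $\int_M[\Omega_0]^n=c\int_M[\chi]^k[\Omega_0]^{n-k}$, which is precisely the definition of $c$. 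Finally, \eqref{eqn:f} asks that $f>f_m$, where $f_m$ is minus the minimum of five explicit expressions; each of these is built by multiplication and division from the strictly positive quantities $c_\zeta=c$, $\int_M[\Omega_0]^n$, $\int_M[\chi]^n$ and $K$ (the exponents being finite and nonnegative because $n-\zeta=k\geq 1$), and is therefore strictly positive. Hence $f_m<0=f$, so \eqref{eqn:f} holds and all hypotheses are verified.

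Granted the hypotheses, Theorem~\ref{thm:main2} yields the equivalence of its three conditions for this data. Condition~(1) is exactly the assertion that the inverse Hessian equation has a (unique, smooth, cone-admissible) solution in $[\Omega_0]$, which gives the solvability statement of the corollary. In condition~(3), for a subvariety $V$ of codimension $p$ the sum $\sum_{j=p}^{n-1}c_j\binom{j}{p}[\chi^{n-j}]\cdot[\Omega_0^{j-p}]$ contains at most one nonzero term, namely the $j=n-k$ term, which is present precisely when $p\leq n-k$; for $p>n-k$ the sum is empty and the residual inequality $\int_V\binom{n}{p}[\Omega_0]^{n-p}>0$ holds automatically because $[\Omega_0]$ is K\"ahler and $\dim V\geq 1$. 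Thus condition~(3) is equivalent to the displayed family of inequalities over the subvarieties on which they are not vacuous, and combining this with the equivalence $(1)\Leftrightarrow(3)$ of Theorem~\ref{thm:main2} proves the corollary.

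I expect no real obstacle beyond this bookkeeping: all of the analytic content is contained in Theorem~\ref{thm:main2}, and the only point that must be argued rather than merely asserted is the admissibility of the choice $f\equiv 0$, i.e.\ the strict negativity of $f_m$ --- which, as noted above, rests on the positivity of $c$, of the K\"ahler volumes of $[\Omega_0]$ and $[\chi]$, and of the constant $K$.
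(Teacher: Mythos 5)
Your route coincides with the paper's: Corollary \ref{gaborcorollary} is meant to follow by specialising Theorem \ref{thm:main2} to $c_{n-k}=c$, $c_j=0$ for $j\neq n-k$, $f\equiv 0$, and your verification of the standing hypotheses is correct and is essentially all there is to check on that side (the branch $\sum_j c_j=c>0$ with $\int_M f\chi^n=0$, the identity \eqref{integralcondition} being precisely the definition of $c$, and $f=0>f_m$ in \eqref{eqn:f} because $\zeta=n-k$ and $f_m$ is minus a minimum of strictly positive quantities). One small omission: condition (1) of Theorem \ref{thm:main2} asks for a solution satisfying the cone condition, whereas the corollary says only ``has a solution''; to get the forward implication you should invoke Lemma \ref{necessity} with $f\equiv 0\geq 0$, which shows that any K\"ahler solution of the inverse Hessian equation automatically satisfies the cone condition.

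The genuine gap is in the final step, which you call bookkeeping but never carry out. Specialising condition (3) of Theorem \ref{thm:main2} gives, for $V$ of codimension $p$, the inequality $\int_V\bigl({n\choose p}[\Omega_0]^{n-p}-c{n-k\choose p}[\chi]^k[\Omega_0]^{n-k-p}\bigr)>0$, which is non-vacuous exactly in the range $1\leq p\leq n-k$; the corollary's display instead carries the coefficient ${k\choose p}$ and ranges over codimensions $n-k\leq p\leq n-1$. These are not the same family of inequalities, so your assertion that condition (3) ``is equivalent to the displayed family over the subvarieties on which they are not vacuous'' is precisely the claim that needs proof, and taken literally it fails: for the $J$-equation ($k=1$) the displayed range would constrain only curves, while the specialised (3) imposes a condition in every codimension $1\leq p\leq n-1$. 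What is true, and what the corollary (as Sz\'ekelyhidi's Conjecture 23) intends, is that the specialised (3) is equivalent, via the identity ${n\choose k}{n-k\choose d-k}={n\choose d}{d\choose k}$, to the family $\hat c\int_V[\Omega_0]^{d}>{d\choose k}\int_V[\chi]^k[\Omega_0]^{d-k}$ over all $d$-dimensional subvarieties with $k\leq d\leq n-1$, where $\hat c={n\choose k}/c$. Your proof must either supply this computation and reconcile (or flag) the indices in the corollary's statement, or it proves a different statement from the one displayed; as written it does neither.
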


To prove Theorem \ref{thm:stability},  it is  first necessary to obtain a version of the theorem assuming uniform positivity of intersection numbers. Namely, we  first prove the following, which is an exact extension of \cite[Theorem 1.1]{gchen} to more general inverse Hessian equations on projective manifolds.  In  fact, we can prove an equivariant version. 

\begin{theorem}\label{thm:main}
Assume that a connected compact Lie group $G$ acts on $M$ by biholomorphisms such that $M$ is strongly $G$-compatible, and $G$ preserves $\chi$ and $f$. The following conditions are equivalent.
\begin{enumerate}
\item The generalised Monge-Amp\`ere equation
\begin{gather}
\Omega^n = \sum_{k=1}^{n-1} c_k \chi^{n-k} \Omega^{k}+f \chi^n,
\label{genmaequation}
\end{gather}
 has a unique smooth $G$-invariant solution $\Omega\in[\Omega_0]$ satisfying the cone condition $$n\Omega^{n-1} - \sum_{k=1}^{n-1} c_k k \chi^{n-k} \Omega^{k-1} >0.$$ 
\item (Cone condition) There exists a $G$-invariant K\"ahler metric $\Omega \in [\Omega_0]$ satisfying the cone condition, i.e., $$n\Omega^{n-1} - \sum_{k=1}^{n-1} c_k k \chi^{n-k} \Omega^{k-1} >0.$$
\item (Uniform numerical condition) There exists a constant $\varepsilon>0$ such that for all $G$-invariant subvarieties $V\subset M$ of co-dimension $p$, we have $$\int_V\Big({n\choose p}[\Omega_0]^{n-p} - \sum_{k=p}^{n-1}c_k{k\choose p}[\chi^{n-k}]\cdot [\Omega_0^{k-p}]\Big) \geq \vep{n\choose p}\int_V[\Omega_0]^{n-p}.$$ 
\end{enumerate} 
\end{theorem}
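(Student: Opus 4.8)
The plan is to prove the equivalence via the cycle $(1)\Rightarrow(2)\Rightarrow(3)\Rightarrow(1)$. The implication $(1)\Rightarrow(2)$ is immediate, since the solution furnished by $(1)$ is itself a $G$-invariant K\"ahler metric in $[\Omega_0]$ satisfying the cone condition; so it remains to establish $(2)\Rightarrow(3)$ and $(3)\Rightarrow(1)$. For $(2)\Rightarrow(3)$ the plan is to first isolate a pointwise linear-algebra statement: if $\Omega$ is a positive $(1,1)$-form with $n\Omega^{n-1}-\sum_{k=1}^{n-1}c_kk\,\chi^{n-k}\Omega^{k-1}>0$ as an $(n-1,n-1)$-form, then for every $1\le p\le n-1$ the $(n-p,n-p)$-form $\binom{n}{p}\Omega^{n-p}-\sum_{k=p}^{n-1}c_k\binom{k}{p}\chi^{n-k}\Omega^{k-p}$ is again strictly positive. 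At a point one diagonalises $\chi$ and $\Omega$ simultaneously and argues by induction on $p$: the inequality attached to a coordinate monomial indexed by a set $S$ of size $n-p$ (known at the previous level) splits, after collecting the terms containing a fixed eigenvalue $\lambda_\ell$ of $\Omega$ with $\ell\in S$, as $\lambda_\ell$ times a positive multiple of the inequality wanted at the next level for the index set $S\setminus\{\ell\}$, minus a sum of terms each of which is $\ge 0$ because every $c_k\ge 0$; since $\lambda_\ell>0$, positivity descends. Compactness of $M$ and strictness of the cone condition then give a $\delta>0$ with $\binom{n}{p}\Omega^{n-p}-\sum_{k=p}^{n-1}c_k\binom{k}{p}\chi^{n-k}\Omega^{k-p}\ge\delta\,\Omega^{n-p}$ for all $p$. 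Restricting to any $G$-invariant subvariety $V$ of codimension $p$ — where a positive $(n-p,n-p)$-form restricts to a nonnegative top form and $\int_V\Omega^{n-p}=\int_V[\Omega_0]^{n-p}$ — and integrating yields $(3)$ with $\varepsilon=\min_{1\le p\le n-1}\delta/\binom{n}{p}>0$.

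The substance of the theorem is $(3)\Rightarrow(1)$, which we would prove by running Chen's strategy \cite{gchen} equivariantly. Set up an appropriate continuity path of generalised Monge--Amp\`ere equations in $[\Omega_0]$ (as in \cite{gchen}), starting from a trivially solvable endpoint, and let $S$ be the set of parameters for which the equation has a smooth solution in $[\Omega_0]$ obeying the cone condition. The equation with a cone solution has at most one solution (comparison principle, cf.\ \cite{Pingen}); hence, since $G$ is connected and fixes $\chi$ and $f$, for $g\in G$ the pullback $g^{*}\Omega$ is again a cone solution in $[\Omega_0]$, so $g^{*}\Omega=\Omega$ and every solution along the path is automatically $G$-invariant. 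Openness of $S$ follows from the implicit function theorem together with openness of the cone condition; and, granted a uniform bound on the oscillation of the $\Omega_0$-relative potentials $\varphi_t$, the higher-order a priori estimates needed to close up the path are precisely those established for the generalised Monge--Amp\`ere operator under the cone hypothesis in \cite{Pingen}. Thus closedness of $S$ reduces to a uniform oscillation estimate along the path.

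This oscillation estimate is where the real work lies, and we would obtain it by a dichotomy in the spirit of the Demailly--Paun concentration-of-mass argument. If the oscillations were unbounded as the parameter increases to $T^{*}:=\sup S$, then the potentials, normalised so that $\sup_M\varphi_t=0$, would subconverge decreasingly to a $G$-invariant $\Omega_0$-plurisubharmonic $\varphi_\infty$ for which the closed positive current $\Omega_0+\spbp\varphi_\infty$ has a strictly positive Lelong number along some irreducible analytic set; using that $M$ is strongly $G$-compatible, this set, and indeed the whole Siu decomposition of the current, can be taken $G$-invariant. Tracking the mass this current carries along the path — controlling the additional $c_k\chi^{n-k}\Omega^{k}$ terms by the same pointwise inequalities used in $(2)\Rightarrow(3)$ — then forces, for this $G$-invariant subvariety $V$ of some codimension $p$, the quantity $\int_V\big(\binom{n}{p}[\Omega_0]^{n-p}-\sum_{k\ge p}c_k\binom{k}{p}[\chi^{n-k}]\cdot[\Omega_0^{k-p}]\big)$ below $\varepsilon\binom{n}{p}\int_V[\Omega_0]^{n-p}$, contradicting $(3)$. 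Hence the oscillations stay bounded, $S$ is closed, $T^{*}$ is the endpoint of the path, and $(1)$ holds; uniqueness and $G$-invariance of the resulting solution were noted above.

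The main obstacle is exactly this last step: upgrading the breakdown of estimates to a genuine $G$-invariant destabilising subvariety and extracting a sharp enough quantitative contradiction with the \emph{uniform} numerical condition $(3)$, all for the full generalised Monge--Amp\`ere operator rather than just the $J$-equation. The generality of the operator should be absorbed by replacing Chen's Hessian estimates with those of \cite{Pingen} and by the pointwise positivity lemma above, while the equivariance rests on strong $G$-compatibility keeping limit currents, their Siu decompositions, and the attendant intersection-number computations inside the $G$-invariant category, after which uniqueness of the solution propagates $G$-invariance at no extra cost.
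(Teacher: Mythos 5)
The equivalence $(1)\Leftrightarrow(2)$ and the implication $(2)\Rightarrow(3)$ in your proposal are fine: your pointwise lemma that the cone condition at order $n-1$ implies positivity of $\binom{n}{p}\Omega^{n-p}-\sum_{k\ge p}c_k\binom{k}{p}\chi^{n-k}\Omega^{k-p}$ for all $p$ is correct (in eigenvalue coordinates it is just the monotonicity $S_{n-k;I}(1/\lambda)\le S_{n-k;i}(1/\lambda)$ for $i\in I$, since all $c_k\ge 0$ — no induction on $p$ is needed), and integrating over $V$ with the compactness constant $\delta$ gives the uniform $\varepsilon$. The paper treats this direction as standard, and your observation that uniqueness plus connectedness of $G$ forces $G$-invariance of the solution matches the paper.

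The genuine gap is in $(3)\Rightarrow(1)$, which you describe as ``running Chen's strategy equivariantly'' but which is in fact a different, and unsubstantiated, argument. You propose to establish closedness of the continuity path by contradiction: if the oscillation of the potentials blows up, extract a limit current with positive Lelong number along a $G$-invariant analytic set $V$, and then ``track the mass'' to show that $V$ violates the uniform numerical condition. This last step — passing from degeneration of the potentials to a quantitative reversal of the intersection-number inequality on the Lelong set — is precisely the implication that is \emph{not} known to hold for these equations (outside of surfaces and the toric case), and neither Chen's paper nor the present one argues this way. The actual mechanism is constructive and runs in the opposite logical direction: one proves $(3)\Rightarrow(2)$ by (i) solving perturbed equations in the classes $(1+t)[\Omega_0]$ for $t>0$ and extracting a weak limit current $\Theta\in[\Omega_0]$ with $\Theta\ge\beta[Y]$ along a $G$-invariant \emph{ample divisor} $Y$ and satisfying a degenerate cone condition (Theorem \ref{thm:conc-mass}; this is where projectivity and $G$-compatibility enter — your proposal never uses either); (ii) building a smooth metric satisfying the cone condition on a neighbourhood of $E_c(\Theta)\cup Y$ by induction on dimension, using resolution of singularities and the numerical condition restricted to subvarieties (Lemmas \ref{lem:local}--\ref{num-resolution}); and (iii) gluing via the regularised maximum (Proposition \ref{prop:gluing}). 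Only then does one invoke the PDE equivalence $(2)\Leftrightarrow(1)$. A secondary but real issue: the a priori estimates cannot simply be quoted from \cite{Pingen}, because $f$ is allowed to be negative here; the paper must re-establish the $C^2$ estimate under the hypotheses \eqref{eqn:f} (Proposition \ref{c2estimateprop}), including the weaker convexity property when $2\zeta<n$.
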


\indent Before proceeding, we explain the $G$-compatibility. We say that $M$ is {\em $G$-compatible} if every $G$-invariant closed analytic subset contains an ample $G$-invariant divisor. Furthermore, suppose $Z$ is a $G$-invariant closed analytic subset. Consider a canonical resolution of singularities (which always exists by the results of \cite{Kollar,bieberstone} that improve on the foundational work of Hironaka on a possibly non-canonical resolution \cite{HironakaI, HironakaII}) $Z_0=Z\leftarrow Z_1 \leftarrow \ldots \leftarrow Z_r$ obtained by blowing up $M$ successively (to get $M_i$) at $G$-invariant (with respect to the lifted $G$-action on $M_i$) smooth centres $C_i \subset Z_i$ and taking the proper transform. If for any $Z$ and any such canonical resolution, $M_i$ is $G$-compatible for all $i$, then $M$ is defined to be {\em strongly $G$-compatible}. 
Note that in this paper, ``$G$-invariant divisor" simply means that the union of the irreducible subvarieties of the divisor is a $G$-invariant set. (That is, the multiplicities are not considered in this terminology.)
\begin{remark}\label{rem:toric}
\indent A projective toric manifold $M$ is strongly $(S^1)^n$-compatible. Indeed, there is an ample toric line bundle $L$ over $M$ that restricts to an ample toric line bundle over every toric subvariety $Z$. Hence, for large $k$, taking the zero locus of a section of $L^k$ that generates an invariant subspace (for the lifted action of $(S^1)^n$), we get an ample invariant divisor. Blowing up a projective toric variety at a smooth toric subvariety gives a toric variety with a toric ample line bundle $L^k \otimes [-E]$ for sufficiently large $k$. The same argument as above shows that smooth projective $T$-varieties, i.e., projective manifolds with an effective $G=(S^1)^k$ action (where $0\leq k \leq n$), are also strongly $G$-compatible.
\end{remark}

\begin{remark}
If $c_k =0 \ \forall \ k$ and $f>0$ everywhere, then the equation \eqref{genmaequation} reduces to the usual Monge-Ampere equation, and hence Theorem \ref{thm:main} follows from the proof of Calabi conjecture by \cite{Yau}, and the numerical characterization of the K\"ahler cone by  Demailly and Paun \cite{dp}. The statement $1\Leftrightarrow 2$ is known in the case $f\geq 0$ in much greater generality. (However, very few cases where $f$ is allowed to be negative appear to be studied \cite{gchen, zheng, pingthree}.) The case of the $J$-equation was studied originally in \cite{XXchen, SongWeinkove, Weinkove}. Other inverse Hessian-type equations were first studied in \cite{FangLaiMa}. \\
\indent We recover many other existing results when they are specialised to the projective case. If $G$ is taken to be trivial, $c_k=0$ for $k  = 1,2\cdots,n-2$, and $c_{n-1}>0$, then this result reduces to the main theorem in \cite{gchen}. If $G=(S^1)^n$, we recover the main theorem in \cite{collinsgabor}. Note that in the toric case,  $G$-equivariant positivity of the numerical condition and its uniform version are equivalent. More generally, by Remark \ref{rem:toric}, our theorem also applies to $T$-varieties. For certain values of the coefficients and phase angles, as explained in \cite{pindhym}, the dHYM equation is a generalised Monge-Amp\`ere equation with non-negative coefficients. Hence, for such phase angles we recover the results of \cite{gchen} when $G$ is trivial and those of \cite{pindhym} when $G=(S^1)^n$. As far as we know, our results on generalised Monge-Amp\`ere equations on projective manifolds are new in the case of $G=(S^1)^k$ (where $1\leq k\leq n-1$).
\end{remark}
\begin{remark}[A comparison with Chen's work \cite{gchen}]
The argument of  Chen in \cite{gchen} relies on the ``diagonal trick" of Demailly and Paun. This  involves solving a certain $J$-equation on the product $M\times M$ and obtaining currents that concentrate along the diagonal $\Delta\subset M\times M$.  Though our strategy of proof is inspired from \cite{gchen}, replicating the diagonal trick in the setting of more general inverse Hessian equations poses significant additional technical challenges. In particular, it  is not clear what equation must one solve on $M\times M$. The extension of our theorem to all K\"ahler manifolds is a work in progress. In this paper, we instead take a different route and use divisors as barriers, and rely on the existence of many sub-varieties on projective manifolds.  More crucially we are able to {\em  improve} Chen's result, even in the case of the $J$ equation (albeit only on projective manifolds), in that we assume only positivity of the intersection numbers, as opposed to {\em uniform positivity} in \cite[Theorem 1.1]{gchen}. This is one of the main contributions of the present work. 
\end{remark}

\begin{remark}[A comparison to the recent preprint of Jian  Song \cite{song}]
In \cite{song}, Jian Song proved a version of Theorem \ref{thm:stability} for the $J$-equation on K\"ahler manifolds using a method that parallel's Gao Chen's method in \cite{gchen}. Our attempts at relaxing the uniform numerical criteria in Theorem \ref{thm:main} grew, in part, out of some discussions with him.  Note  that the key ingredient in both  our work and  Song's work is a degenerate mass concentration  result (compare Proposition \ref{prop:conc-massnonuniform} to \cite[Theorem 5.1]{song}). It would be interesting to remove the projectivity  assumption in Theorems \ref{thm:main} and \ref{thm:stability}, but as remarked above, there are new technical difficulties that need to be  overcome. 
\end{remark}
\begin{remark}
 Using a reformulation of the dHYM equation as a generalised Monge-Amp\`ere equation, Theorem \ref{thm:main2} implies that in the three-dimensional case, when $\tan(\hth)\geq 0$, positivity of some intersection numbers is equivalent to existence of a solution. The result in \cite{gchen} proved such an assertion only under the uniform positivity assumption. Recently, \cite{tak3} proved the the most general result in this direction. 
\end{remark}
\begin{remark}
Combining Theorem \ref{thm:stability} above with Theorem 5 and Remark 1 in \cite{Zak}, it follows that the optimal lower bound for the $J$-functional is attained by some sub-variety. We refer the interested reader to \cite{Zak} for more details. \end{remark}


We end this section with outlines of the proofs of Theorem \ref{thm:main} and Theorem \ref{thm:stability}. As mentioned earlier, the proof of Theorem \ref{thm:stability} relies on the weaker Theorem \ref{thm:main}.

\subsection*{Outline of the proof of Theorem \ref{thm:main}} 
We only need to focus on the implication $3 \Rightarrow 1$ because the others are somewhat standard. In fact since $(1)\iff  (2)$ by Theorem \ref{purePDEthm}, we  actually prove that $(3)\implies (2)$. That is, given the numerical condition, we produce a $G$-invariant $\Omega\in [\Omega_0]$ satisfying the cone condition. Just as in \cite{dp, gchen}, we proceed by means of a continuity method. For $t\geq 0$, consider the equation
\begin{gather}
\Omega_{t}^n = \sum_{k=1}^{n-1} c_k \chi^{n-k} \Omega_{t}^{k}+ f\chi^n+ a_t\chi^n,
\label{cont-genmaeqnp}
\end{gather}
where $\Omega_t\in (1+t)[\Omega_0]$. Note that $a_t\xrightarrow{t\rightarrow 0^+}0$, and hence we want to  solve the above equation at $t=0$. For $t>>1$, $\hat \Omega_t =  (1+t)\Omega_0$  satisfies the cone condition   $$n\hat \Omega_{t}^{n-1} - \sum_{k=1}^{n-1} c_k k \chi^{n-k}\hat \Omega_{t}^{k-1} >0.$$ Moreover, for all $t\geq 0$, the assumptions on $f_m$ are met. By Theorem \ref{purePDEthm}, there exists a solution to \eqref{cont-genmaeqn} for $t>>1$. In particular, if we let $$I = \{t\in [0,\infty)~|~ \eqref{cont-genmaeqnp} \text{ has a solution}\},$$ then $I$ is non-empty. The infinite-dimensional implicit function theorem implies that $I$ is open. We need to show that the set is closed. By the nature of the cone condition, if $t\in I$, then $t'\in I$ for all $t'>t$. So let $t_0 = \inf I.$ It is enough to prove that $t_0\in I$. Replacing $\Omega_0$ by $(1+t_0)\Omega_0$, without loss of generality, we may assume that $t_0 = 0$ i.e.\ we have a solution to \eqref{cont-genmaeqn} for all $t>0$. 
\begin{itemize}
\item {\bf Step-1.}  Let $Y$ be a $G$-invariant ample divisor (which exists by hypothesis). By Theorem \ref{thm:conc-mass}, there is a closed, non-negative current $\Theta\in [\Omega_0]$ such that $\Theta \geq 2\beta [Y]$ for some $\beta<<1$, and $\Theta - \beta[Y]$ satisfies the cone condition (not necessarily with strict positivity though) outside $Y$ in the sense of Chen. 
\item {\bf Step-2.} For any $c>0$, the Lelong sub-level set $E_c = \{x~|~\nu(\Theta - \beta[Y],x)\geq c\}$ is a closed analytic subset that is $G$-invariant if $\Theta$ and $Y$ are $G$-invariant. Let $Z = E_c  \cup Y$, and let $Z \subset V \subset\subset U $. In a neighbourhood $U$ of $Z$, we construct a metric $\Omega_U = \Omega_0 + \ddbar\psi_U$ satisfying the cone condition using the induction hypothesis and the existence of $G$-equivariant resolutions of singularities (which follows from functoriality in \cite{bieberstone, Kollar}) (cf.\ Lemma \ref{localmetric}). 
\item {\bf Step-3.} Let $\chi_Y \in [Y]$ be a K\"ahler metric. Outside a neighbourhood (of pre-determined size) $U$ of the $G$-invariant ample divisor $Y$, we regularise $T=\Theta - \beta[Y]+\beta\chi_Y$ to obtain a metric in $[\Omega_0]$ satisfying the cone condition on $U^c$.  
\item {\bf Step-4.} Using Proposition \ref{prop:gluing} we glue the metrics obtained in Step-2 and Step-3 to get a smooth metric satisfying the cone condition. In the $G$-equivariant case, the metric obtained  can be averaged over $G$ to make it $G$-invariant. Since $G$ is connected and compact, the averaging process does not change the K\"ahler class. Moreover, since the cone condition can be written in a convex manner, the averaged metric continues to satisfy it, and we are done.
\end{itemize}

\subsection*{Outline of the proof of Theorem \ref{thm:stability}} The proof of Theorem \ref{thm:main} carries through almost verbatim, as long as one can construct a K\"ahler metric satisfying the (non-uniform) cone condition in the neighbourhood of {\em any} analytic subset $Z\subset M$ (cf. Proposition \ref{prop:nonuniformlocal}). If $Z$ is smooth, this again follows almost verbatim from the proof of Lemma \ref{lem:localconesmooth}. In general we use a resolution of singularities, a stronger mass concentration lemma for certain semi-ample classes (cf. Proposition \ref{prop:conc-massnonuniform}), and an induction on the maximal dimension of an irreducible component of $Z$. During the course of our proof, we need to choose an ample divisor $Y$ (akin to Step-1 in the proof of Theorem \ref{thm:main}) satisfying some properties. It is not clear to us if such a choice can be made in the $G$-invariant case (even under the assumption of $G$-compatibility).

\subsection*{Acknowledgements} This is work partially supported by grant F.510/25/CAS-II/2018(SAP-I) from UGC (Govt. of India). The first author (V. Datar) is grateful to the Infosys Foundation for their support through the Young Investigator Award. The second author (V. Pingali) is partially supported by a MATRICS grant MTR/2020/000100 from SERB (Govt. of India) We are deeply indebted to Gao Chen for many clarifications regarding his paper, and for his comments on all drafts of this work. We thank Jian Song for his comments on the first draft of the paper, and for many useful discussions on the issue of relaxing uniform positivity in the Theorem \ref{thm:main}. We also thank Indranil Biswas,  Apoorva Khare, Kapil Paranjape, Venkatesh Rajendran for help with regard to some algebraic aspects of the work. Finally, we thank the anonymous reviewer for detailed and useful comments.

\section{The PDE aspects}\label{purePDE}
\indent In this section we prove that the generalised Monge-Amp\`ere equation with a slight negative ``constant" term can be solved (if $\displaystyle \sum_k c_k>0$) assuming the cone condition holds. 
\begin{theorem}\label{purePDEthm}
$(1)\Leftrightarrow (2)$ in Theorem \ref{thm:main}.
\end{theorem}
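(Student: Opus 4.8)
The implication $(1)\Rightarrow(2)$ is immediate: a smooth solution $\Omega\in[\Omega_0]$ of \eqref{genmaequation} satisfying the cone condition is itself a K\"ahler metric in $[\Omega_0]$ satisfying the cone condition, and it is $G$-invariant when the solution is, so there is nothing to prove. (Here one should check that $\Omega$ is actually positive; this follows from the cone condition together with the sign hypotheses on $f$ and the $c_k$, since $n\Omega^{n-1}>\sum_k c_k k\chi^{n-k}\Omega^{k-1}\ge 0$ forces $\Omega^{n-1}>0$, and then $\Omega^n=\sum_k c_k\chi^{n-k}\Omega^k+(f+a)\chi^n$ with the right-hand side controlled in sign gives $\Omega>0$ by a standard positivity-cone argument.) The substance is the converse $(2)\Rightarrow(1)$, and the plan is the continuity method already sketched after Theorem \ref{thm:main}.

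The plan is to fix a $G$-invariant K\"ahler metric $\Omega_0\in[\Omega_0]$ satisfying the cone condition (possible by $(2)$ after averaging over $G$, which preserves the class since $G$ is connected and compact, and preserves the cone condition since it is convex), and to run the continuity path \eqref{cont-genmaeqnp} with $\Omega_t\in(1+t)[\Omega_0]$ and $a_t$ the unique constant making the cohomological/integral identity \eqref{integralcondition} hold at parameter $t$; note $a_t\to 0$ as $t\to 0^+$. First I would record that for $t\gg 1$ the metric $\hat\Omega_t=(1+t)\Omega_0$ satisfies the cone condition and that the hypotheses on $f_m$ persist along the path (the bound \eqref{eqn:f} is stable under $f\mapsto f+a_t$ because the last entry involving $\int[\Omega_0]^n/\int[\chi]^n$ only improves as the class grows, and $a_t\to 0$). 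Openness of the solution set $I$ follows from the implicit function theorem on suitable H\"older spaces: the linearisation of \eqref{cont-genmaeqnp} at a solution satisfying the cone condition is (up to a positive factor) the operator $u\mapsto \Lambda_{\Theta}(\ddbar u)$ for the positive-definite endomorphism $\Theta$ built from $n\Omega^{n-1}-\sum_k c_k k\chi^{n-k}\Omega^{k-1}$, which is elliptic and invertible modulo constants; the free constant is absorbed by letting $a_t$ vary, so the map is a local diffeomorphism. Closedness is the a priori estimates part: one needs $C^0$, then a Chern–Lu / second-order estimate giving $C^2$ bounds, then Evans–Krylov and Schauder bootstrapping, all uniform on compact $t$-intervals; this is where the structural hypotheses (concavity of the relevant operator, the cone condition as a uniform ellipticity reservoir, the lower bound $f>f_m$ of \eqref{eqn:f} to control the zeroth-order term and keep the solution in the cone) are used. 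Since the cone condition propagates upward in $t$, $I$ is an interval of the form $[t_0,\infty)$ or $(t_0,\infty)$; openness plus closedness on $[t_0,\infty)\cap I$ forces $t_0\in I$, and rescaling $\Omega_0\rightsquigarrow(1+t_0)\Omega_0$ reduces to $t_0=0$, i.e. a solution of \eqref{genmaequation} itself.

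The main obstacle is the closedness step, specifically the a priori estimates for a fully nonlinear equation that is \emph{not} of the standard Monge–Amp\`ere or inverse-Hessian form but a generalised Monge–Amp\`ere equation with a possibly sign-changing right-hand side. The key points to get right are: (i) a $C^0$ estimate — likely via an Alexandrov–Bakelman–Pucci or Moser iteration argument adapted to the operator $G(\Omega)=\Omega^n-\sum_k c_k\chi^{n-k}\Omega^k$, using that the cone condition makes $G$ elliptic and concave along the solution; (ii) the second-order estimate, where one differentiates the equation twice and applies the maximum principle to a quantity like $\log\Lambda_\chi\Omega - A\varphi$, contracting against the linearised operator and absorbing the bad third-order terms using concavity — here the precise constant $f_m$ in \eqref{eqn:f} is engineered exactly so that the zeroth-order contributions have a favourable sign and the solution stays strictly inside the cone, which is what keeps the ellipticity constants under control; and (iii) verifying that these estimates are uniform as $t\downarrow t_0$, which is automatic once they depend only on $\|f+a_t\|_{C^2}$, the geometry of $\chi$, and the fixed class data, all of which are bounded on $[t_0,t_0+1]$. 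I expect (ii) to be the technically heaviest, and I would expect this theorem to be proved in the body of Section \ref{purePDE} by assembling these estimates; much of it parallels the corresponding arguments in \cite{Pingen, gchen} for the non-equivariant case, with the $G$-invariance carried along trivially since all operators and the averaging are $G$-equivariant.
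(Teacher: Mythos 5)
There is a genuine gap in your proof of $(2)\Rightarrow(1)$: the continuity path you propose is the wrong one, and as set up it has no valid starting point. The path \eqref{cont-genmaeqnp}, in which the class $(1+t)[\Omega_0]$ varies and the equation is kept fixed, is the path used in the outline of Theorem \ref{thm:main} (the implication $(3)\Rightarrow(2)$), and there the solvability at $t\gg 1$ is justified precisely by invoking Theorem \ref{purePDEthm}: the fact that $(1+t)\Omega_0$ satisfies the cone condition for large $t$ does not by itself produce a solution of the equation in that class --- that is exactly the existence statement you are trying to prove, so your argument is circular. For large $t$ the equation along \eqref{cont-genmaeqnp} is still a generalised Monge--Amp\`ere equation (the coefficients $c_k$ do not shrink), so Yau's theorem does not apply directly either. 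The paper instead fixes the class $[\Omega_0]$ and deforms the \emph{equation} via \eqref{continuitypath}, interpolating between the Calabi--Yau equation at $t=0$ (solvable by \cite{Yau}, since $c_0>0$) and \eqref{genmaequation} at $t=1$. Openness then requires knowing that any solution along the path automatically satisfies the cone condition; this is the content of Lemma \ref{necessity}, proved by a path-connectedness argument starting from a point where the right-hand side is nonnegative and ruling out degeneration using Maclaurin's inequalities together with the quantitative lower bound $f>f_m$ of \eqref{eqn:f}. Your sketch treats the cone condition as something to be ``kept under control'' by the estimates, but the openness step genuinely needs this a priori propagation statement, which you neither state nor prove. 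Also, in your scheme the constant $a_t$ is determined cohomologically at each $t$ and is not a free parameter available to absorb the cokernel of the linearisation; in the paper's path no such device is needed.

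A secondary, less fatal, underestimate is in the closedness step. The second-order estimate is not a routine Chern--Lu computation: the paper first establishes the structural properties of the operator $F_{t,p}$ in Lemma \ref{usefulpropslemma} (in particular the refined convexity inequality \eqref{strongconvexity}, with the extra term $\alpha=\tfrac12$ needed when $2\zeta<n$), then proves the gradient-dependent bound $\Delta_\chi\phi_t\le C(1+|\nabla_\chi\phi_t|^2)$ in Propositions \ref{secondderivative} and \ref{c2estimateprop} using the test function $-\gamma(\phi)+\log\lambda_n$, and removes the gradient term by the blow-up argument of \cite{collinsjacobyau}. Moreover, since $f$ is not constant, Evans--Krylov is not applied directly; the $C^{2,\alpha}$ estimate is again obtained by a blow-up argument. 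Your overall architecture for this part (a $C^0$ bound by Moser iteration, second-order estimate, higher regularity, uniformity in $t$) matches the paper, but the proof cannot be assembled as written until the continuity path and the cone-propagation lemma are replaced by the ones above.
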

\indent We prove Theorem \ref{purePDEthm} using the method of continuity. Uniqueness is standard. Given uniqueness, the equivariant version of the theorem follows easily. Hence, we may ignore $G$ for the remainder of this section. Since the case $c_k=0 \ \forall \ k, f>0$ is well-known \cite{Yau}, we assume without loss of generality that $\sum_k c_k>0$. \\
\indent Note that since $\int_M f\chi^n \geq 0$, there is a point $p\in M$ where $f(p) \geq 0$. At such a point, it is easy to see that the cone condition is met. This observation combined with the following lemma shows the necessity of the cone condition at all points on $M$.
\begin{lemma}
Suppose $f(p)\geq 0$ for some $p\in M$. Assume that Equation \ref{genmaequation} has a smooth solution $\Omega$. Then $\Omega>0$ and $n\Omega^{n-1} - \displaystyle \sum_{k=1}^{n-1} c_k \chi^{n-k} k \Omega^{k-1}>0$.
\label{necessity}
\end{lemma}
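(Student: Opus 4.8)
The plan is to exploit the pointwise/algebraic nature of the claim: both positivity of $\Omega$ and the cone inequality $n\Omega^{n-1} - \sum_k c_k k\,\chi^{n-k}\Omega^{k-1} > 0$ are conditions on the eigenvalues of $\Omega$ relative to $\chi$ at a single point, so it suffices to argue fiberwise and then propagate. Fix local coordinates at a point $x$ so that $\chi$ is the identity and $\Omega$ is diagonal with (real) entries $\lambda_1, \dots, \lambda_n$. Then the equation \eqref{genmaequation} reads $\prod_i \lambda_i = \sum_{k=1}^{n-1} c_k\, e_k(\lambda) + f(x)$, where $e_k$ is the $k$-th elementary symmetric polynomial, and the cone condition at $x$ becomes $n\,e_{n-1}(\lambda) - \sum_{k=1}^{n-1} c_k k\, e_{k-1}(\lambda) > 0$; equivalently $\partial_{\lambda_j}\big(\prod_i \lambda_i - \sum_k c_k e_k(\lambda)\big) > 0$ for every $j$, since $\partial_{\lambda_j} e_k(\lambda) = e_{k-1}(\lambda \setminus \lambda_j)$ and $\sum_j e_{k-1}(\lambda\setminus\lambda_j) = (n-k+1)e_{k-1}(\lambda)$, while the symmetry of the configuration forces all $n$ partial derivatives to be compared simultaneously.

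First I would establish the statement at the distinguished point $p$ where $f(p) \geq 0$. There one argues directly: since $\Omega$ is a smooth $(1,1)$-form solving the equation, I would first show $\Omega$ is positive \emph{somewhere} (e.g.\ at a point where a suitable integral quantity is extremal, or by noting $\int_M \Omega^n = \int_M\Omega_0^n > 0$ forces $\Omega^n > 0$ somewhere, hence all eigenvalues nonzero and — by continuity and connectedness of the positive orthant argument below — positive on an open set), and then use the structure of \eqref{genmaequation}: if some $\lambda_j \leq 0$ at a point where the other eigenvalues are positive, the right-hand side $\sum_k c_k e_k(\lambda) + f$ can be analysed term by term to derive a contradiction with $\prod_i \lambda_i \leq 0$, using $c_k \geq 0$, $\sum c_k > 0$, and $f(p) \geq 0$. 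The clean way is: at a point with $f \geq 0$, the hypersurface $\{\prod \lambda_i = \sum c_k e_k(\lambda) + f\}$ in eigenvalue space, together with the solvability, pins the eigenvalues into the positive cone $\Gamma = \{\lambda : \lambda_i > 0, \text{ cone inequality holds}\}$, because $\Gamma$ is precisely the connected component of the complement of the degeneracy locus on which the equation is elliptic and the right-hand side has the correct sign.

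Second — and this is where I expect the main obstacle — I would propagate positivity and the cone condition from the set where they hold to all of $M$. The standard mechanism: let $U \subset M$ be the (nonempty, open) set where $\Omega > 0$ and the cone condition holds strictly. On $U$ the operator $F(\Omega) := \Omega^n - \sum_k c_k \chi^{n-k}\Omega^k$ is elliptic (this is exactly the content of the cone condition, which is the positivity of the linearisation), and $\Omega$ solves $F(\Omega) = f\chi^n$. I would invoke a connectedness/continuity argument: suppose $\partial U \cap M \neq \emptyset$, take $x_0 \in \partial U$; by continuity $\Omega(x_0) \geq 0$ and the cone inequality holds non-strictly at $x_0$, so $\Omega(x_0)$ lies on the boundary of the elliptic cone $\Gamma$. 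The hard part is ruling this out — one cannot in general, so the correct statement must be that $\overline{U} = M$ follows because $\Gamma$ and its closure interact with the equation in a way that forbids boundary points: at a point of $\partial\Gamma$ some eigenvalue or some "partial Hessian sum" vanishes, and plugging into \eqref{genmaequation} while using $f > f_m$ (the explicit lower bound \eqref{eqn:f}, which is designed precisely to keep $f$ large enough that the right-hand side never lets $\lambda$ reach $\partial\Gamma$) yields a contradiction. So the real engine is a maximum-principle-style estimate: at an interior minimum of the smallest "cone eigenvalue" $g(x) := n e_{n-1}(\lambda(x)) - \sum_k c_k k\, e_{k-1}(\lambda(x))$, differentiate \eqref{genmaequation} twice, use ellipticity and the concavity properties of the elementary symmetric functions on $\Gamma$, and conclude $\min_M g > 0$ provided $f$ stays above $f_m$; combined with the established positivity at $p$, this closes the argument. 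I would organise the write-up as: (i) reduce to the eigenvalue picture at a point; (ii) handle the point $p$; (iii) run the maximum principle / connectedness argument globally using the bound \eqref{eqn:f} on $f$, which is the technically delicate ingredient since it requires carefully bounding each term $c_k e_k(\lambda)$ from below in terms of $e_n(\lambda)$ and $g$ near $\partial\Gamma$.
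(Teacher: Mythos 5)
You have the right skeleton — diagonalise at a point, start from $p$ where $f(p)\geq 0$, propagate by connectedness, and rule out degeneracy using the lower bound $f>f_m$ of \eqref{eqn:f} — but the actual engine of the proof is missing. At the decisive moment ("the hard part is ruling this out") you offer two things: the assertion that $f_m$ is "designed precisely" to forbid reaching $\partial\Gamma$, which is not an argument, and a maximum-principle scheme in which you would differentiate \eqref{genmaequation} twice at an interior minimum of the cone quantity $g$ and exploit concavity. The second is both unexecuted (the "technically delicate ingredient" you flag is exactly the part left undone, and it is not clear it closes) and unnecessary: no differentiation of the equation is needed at all. The point you are missing is that the contradiction at a degeneracy point is purely pointwise and algebraic. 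Take the first point $r$ on a path from $p$ to an arbitrary $q$ where the cone condition or positivity degenerates (one checks the cone condition must degenerate first); if $f(r)\geq 0$ the situation is standard, so assume $f(r)<0$. Writing $\sigma_k$ for the elementary symmetric functions of the reciprocals $1/\lambda_j$ and $\sigma_{k;i}$ for those omitting $\lambda_i$, degeneracy means $1=\sum_k c_k\binom{n}{k}^{-1}\sigma_{n-k;i}$ for some $i$. Subtracting this from the equation itself (via $\sigma_{n-k}=\sigma_{n-k;i}+\lambda_i^{-1}\sigma_{n-k-1;i}$) gives
\begin{gather*}
\sigma_{n-1;i}=\frac{1}{|f|}\sum_{k=1}^{n-1}\frac{c_k}{\binom{n}{k}}\,\sigma_{n-k-1;i}\;\geq\;\frac{c_{\zeta}\,\sigma_{n-\zeta-1;i}}{\binom{n}{\zeta}\,|f|},
\end{gather*}
and the Maclaurin inequalities turn this into a lower bound $\sigma_{n-1;i}\geq\bigl(c_{\zeta}(n-\zeta)/(n|f|)\bigr)^{(n-1)/\zeta}$, while the degenerate cone equality plus Maclaurin gives an upper bound on $\sigma_{n-1;i}$ in terms of $c_{\zeta}$. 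Comparing the two forces $|f(r)|\geq\bigl(\zeta c_{\zeta}/n\bigr)^{\zeta/(n-\zeta)}c_{\zeta}(n-\zeta)/n$, contradicting \eqref{eqn:f}. This is the step your write-up does not contain, and without it the proposal does not prove the lemma.

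A secondary, smaller issue: your treatment of the starting point $p$ (positivity "somewhere" via $\int_M\Omega^n>0$, then "pinning" into the elliptic cone) is vaguer than needed; what is actually used is that at a point where $f\geq 0$ the cone condition for a solution is the standard fact from the inverse Hessian theory, after which the path argument above does all the work. Once you replace your maximum-principle step by the pointwise Maclaurin computation, the connectedness mechanism you describe (open set $U$ versus first degeneracy point on a path) is essentially the same as the paper's.
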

\begin{proof}
Let $q\in M$ be any other point and $\gamma(s)$ be a path connecting $p$ to $q$. Assume that $r$ is the first point on $\gamma(s)$ where either the cone condition or the K\"ahler condition becomes degenerate. It is easy to see that the cone condition must become degenerate first. We will rule out this possibility, hence showing that neither can become degenerate.\\
\indent If $f(r)\geq 0$, the result is standard. Therefore, without loss of generality assume that $f(r)<0$. Choose holomorphic normal coordinates for $\chi$ at $r$ such that $\Omega$ is diagonal with eigenvalues $\lambda_1\leq \lambda_2 \ldots$. Equation \ref{genmaequation} can be rewritten as
\begin{gather}
1=\sum_{k=1}^{n-1} c_k \frac{k! (n-k)!}{n!} S_{n-k} \left(\frac{1}{\lambda}\right) + f S_n\left(\frac{1}{\lambda}\right),
\label{eqincoord}
\end{gather}
where the symmetric polynomial $S_k(A)$ is the coefficient of $t^k$ in $\det(I+tA)$. The cone condition can be written as
\begin{gather}
1>\sum_{k=1}^{n-1} c_k \frac{k! (n-k)!}{n!} S_{n-k;i} \left(\frac{1}{\lambda}\right), 
\label{coneincoord}
\end{gather}
where $S_{n-k;i}\left(\frac{1}{\lambda}\right)$ is a symmetric polynomial in $n-1$ eigenvalues (excluding $\lambda_i$). For future use, we define $S_{n-k;i,j}=0$ when $i=j$ and the symmetric polynomial in $n-2$ eigenvalues (excluding $\lambda_i, \lambda_j$) when $i\neq j$. When it degenerates, for some $i$,
\begin{gather}
1=\sum_{k=1}^{n-1} c_k \frac{k! (n-k)!}{n!} S_{n-k;i} \left(\frac{1}{\lambda}\right).
\label{idegen}
\end{gather}
Denote $S_k \left(\frac{1}{\lambda}\right)$ by $\sigma_k$. Using $\sigma_{n-k}=\sigma_{n-k;i}+\frac{\sigma_{n-k-1;i}}{\lambda_i}$ we arrive at the following equations.
\begin{gather}
0=\displaystyle \sum_{k=1}^{n-1} c_k \frac{k! (n-k)!}{n!\lambda_i} \sigma_{n-k-1;i} +f\sigma_n \nonumber \\
\Rightarrow \sigma_{n-1;i} = \frac{1}{\vert f \vert} \displaystyle \sum_{k=1}^{n-1} c_k \frac{k! (n-k)!}{n!} \sigma_{n-k-1;i} \geq   \frac{c_{\zeta}\sigma_{n-\zeta-1;i}}{{n \choose \zeta}\vert f \vert}. 
\label{subdegensemifinal}
\end{gather}
At this juncture, we use the Maclaurin inequality to simplify the expression in \ref{subdegensemifinal}.
\begin{gather}
\frac{c_{\zeta}\sigma_{n-\zeta-1;i}}{{n \choose \zeta}\vert f \vert}\geq \frac{c_{\zeta}{n-1 \choose \zeta}\sigma_{n-1;i}^{\frac{n-\zeta-1}{n-1}}}{{n \choose \zeta}\vert f \vert}\nonumber \\
\Rightarrow \sigma_{n-1;i} \geq \left ( \frac{c_{\zeta}(n-\zeta)}{n\vert f \vert} \right )^{\frac{n-1}{\zeta}}.
\label{subdegen}
\end{gather}
From \ref{idegen} we see that
\begin{gather}
1\geq \frac{c_{\zeta}}{{n \choose \zeta}}\sigma_{n-\zeta;i} \nonumber \\
\Rightarrow \frac{{n \choose \zeta}}{c_{\zeta}} \geq \sigma_{n-\zeta;i} \geq {n-1 \choose \zeta-1} \sigma_{n-1;i}^{\frac{n-\zeta}{n-1}}.
\label{afteridgen}
\end{gather}
Comparing \ref{afteridgen} and \ref{subdegen} we see that
\begin{gather}
\vert f \vert \geq \left ( \frac{\zeta c_{\zeta}}{n} \right )^{\frac{\zeta}{n-\zeta}} \frac{c_{\zeta} (n-\zeta)}{n}.
\end{gather}
By the assumption on $f$ we arrive at a contradiction. Hence the cone condition never degenerates. From the cone condition, we can clearly see that K\"ahlerness also holds.
\end{proof}
Now we prove existence assuming that the background metric $\hat\Omega_0\in [\Omega_0]$ satisfies the cone condition. To this end, consider the following continuity path depending on a parameter $0\leq t \leq 1$.
\begin{gather}
\Omega_{t}^n = t\left (\displaystyle \sum_{k=1}^{n-1} c_k \chi^{n-k} \Omega_t^{k} +f \chi^n\right ) +(1-t)c_0\chi^n,
\label{continuitypath}
\end{gather}
where $c_0$ is a constant such that $c_0\int_M[\chi]^n=\displaystyle \int_M [\Omega_0]^n$. For a smooth function $\phi$, let $\Omega_{\phi} = \hat\Omega_0 + \spbp \phi$. We normalise $\phi$ to satisfy $\inf \phi =0$. At $t$, denote by $\phi_t$ the unique such smooth function so that $\Omega_t = \hat\Omega_0+\spbp \phi_t$.  At $t=0$, there exists a smooth solution thanks to Yau's resolution of the Calabi conjecture. For $0\leq t \leq \frac{1}{2}$, we see that $tf \chi^n +(1-t)c_0\chi^n >0$ by the assumptions on $f$. For $t>\frac{1}{2}$, we see (using the assumptions on $f$ again) that $tf +(1-t)c_0 >- \left ( \frac{t \zeta c_{\zeta}}{n } \right )^{\frac{\zeta}{n-\zeta}} \frac{tc_{\zeta} (n-\zeta)}{n} $. The proof of Lemma \ref{necessity} implies that the cone condition is met. Hence, the linearisation is invertible and the set of all such $t$ where there is a solution is open. To prove that it is closed, we need \emph{a priori} estimates. Since we are given that the cone condition is preserved along the continuity path, using Yau's Moser iteration technique (akin to \cite{Weisun} for instance), one can easily prove a $C^0$ estimate. We now prove a Laplacian estimate.
\begin{proposition}
$\Delta_{\chi}{\phi_t}\leq C $ where $C$ is independent of $t$.
\label{secondderivative}
\end{proposition}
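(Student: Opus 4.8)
The plan is to establish the Laplacian estimate $\Delta_\chi \phi_t \le C$ by the standard Aubin--Yau maximum principle argument adapted to the generalised Monge--Amp\`ere operator, using the cone condition as the essential source of ellipticity. Write the equation in the form $F(\lambda) = \sum_{k=1}^{n-1} c_k \frac{k!(n-k)!}{n!} S_{n-k}(1/\lambda) + (f + a_t)S_n(1/\lambda) = 1$ in $\chi$-normal coordinates where $\Omega_t = \hat\Omega_0 + \spbp\phi_t$ is diagonal with eigenvalues $\lambda_i$. The linearised operator $L = \sum_i F^{ii}\frac{\d^2}{\d z^i \d\bar z^i}$ (coefficients $F^{ii} = \d F/\d \lambda_i$) is, after differentiating the equation in the direction of the cone condition, exactly the operator controlled by $n\Omega_t^{n-1} - \sum_k c_k k\chi^{n-k}\Omega_t^{k-1} > 0$; concretely one checks that $\sum_i F^{ii} \ge c(n,\zeta) > 0$ and each $F^{ii} > 0$, with a lower bound depending only on the constants $c_k$, the bounds on $f$ from \eqref{eqn:f}, and the $C^0$ estimate already in hand.

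**The key steps, in order, are:** (i) Record the structural identities for the operator and verify ellipticity ($F^{ii} > 0$) and the uniform lower bound on $\sum_i F^{ii}$, using the proof of Lemma \ref{necessity} together with the $C^0$ bound on $\phi_t$ to keep the eigenvalues in a region where the operator is uniformly elliptic away from degeneracy of the cone. (ii) Consider the test function $u = \log \mathrm{tr}_\chi \Omega_t - A\phi_t$ for a large constant $A$ to be chosen, and compute $L(u)$ at an interior maximum point $p$. (iii) Differentiate the equation once and twice, commute covariant derivatives (this produces terms involving the curvature of $\chi$, bounded by a constant times $\mathrm{tr}_\chi\Omega_t$ times $\sum_i F^{ii}$, and a ``good'' negative third-order term $-\sum_{i,j} F^{ii} |\nabla_i \lambda_j|^2 / (\cdots)$ which one discards or uses via Cauchy--Schwarz as in the Aubin--Yau computation). (iv) At $p$, the inequality $L(u)(p) \le 0$ combined with $L(\phi_t) = \sum_i F^{ii}(1 - \Omega_t^{ii}) \ge \sum_i F^{ii} - n$ (bounded below) and $L(\log\mathrm{tr}_\chi\Omega_t) \ge -C\,\mathrm{tr}_\chi\Omega_t \cdot \frac{\sum_i F^{ii}}{\mathrm{tr}_\chi\Omega_t} + (\text{good terms})$ yields, after choosing $A$ large relative to the curvature bound of $\chi$, a bound $\mathrm{tr}_\chi\Omega_t(p) \le C$; evaluating $u$ then gives $\sup_M \mathrm{tr}_\chi\Omega_t \le C'$, hence $\Delta_\chi\phi_t = \mathrm{tr}_\chi\Omega_t - n \le C$, with $C$ independent of $t$ since all inputs (the $C^0$ bound, the bounds on $f$, the constants $c_k$, and the geometry of $\chi$) are $t$-independent.

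**The main obstacle** I anticipate is (i): ensuring the ellipticity constants do not degenerate along the continuity path. The coefficient $f + a_t$ can be negative, so $S_n(1/\lambda)$ enters with a bad sign, and one must show that the cone condition — which is propagated along the path by hypothesis — forces a quantitative lower bound on $\min_i F^{ii}$ and on $\sum_i F^{ii}$ rather than just positivity. This is where the specific thresholds in \eqref{eqn:f} are used: the argument of Lemma \ref{necessity} shows the cone condition cannot become degenerate, but for the Laplacian estimate one needs an \emph{effective} version, i.e. that the distance to degeneracy is bounded below by a constant depending only on the fixed data. Once the eigenvalues are confined (via the $C^0$ estimate and the inequality $1 \ge \frac{c_\zeta}{\binom{n}{\zeta}}\sigma_{n-\zeta;i}$ and its consequences as in \eqref{afteridgen}--\eqref{subdegen}) to a compact region of the positive cone bounded away from the hypersurface where some $F^{ii}$ vanishes, the rest is the classical maximum-principle bookkeeping. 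A secondary technical point is handling the mixed third-order terms $F^{ii}\nabla_{\bar i}\Omega_{j\bar j}\nabla_i(\cdot)$ coming from the second derivative of a general symmetric-function operator; these are dominated in the usual way by the concavity (or, where concavity fails for the full operator, by the separate concavity of each $S_{n-k}(1/\lambda)^{1/(\text{degree})}$ in the positive cone) together with a Cauchy--Schwarz absorption into the good negative term.
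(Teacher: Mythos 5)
There are two genuine gaps here, both at the heart of the matter. First, your mechanism for ellipticity does not work: the $C^0$ bound on $\phi_t$ gives no pointwise control whatsoever on the eigenvalues $\lambda_i$ of $\Omega_t$ (that control \emph{is} the Laplacian estimate), so you cannot "confine the eigenvalues to a compact region" before the fact. For these inverse-Hessian operators the degeneration is not that some $F^{ii}$ vanishes on a hypersurface inside the cone, but that $\partial F/\partial\lambda_i\to 0$ as the eigenvalues blow up -- exactly the regime the estimate must handle -- so no uniform lower bound on $\min_i|F^{ii}|$ or on $\sum_i|F^{ii}|$ exists, and the step "$L(\phi_t)\geq \sum_iF^{ii}-n$ bounded below, choose $A$ large" does not close. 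The correct substitute, which the paper isolates as property (9) of Lemma \ref{usefulpropslemma} (in the spirit of Sz\'ekelyhidi's $\mathcal{C}$-subsolutions), is the inequality $\sum_i \frac{\partial F_{t,p}}{\partial\lambda_i}(\lambda_i-\mu_i)\geq\theta\big(1-\sum_i\frac{\partial F_{t,p}}{\partial\lambda_i}\big)$ valid once $\mathrm{tr}(A)>N$, where $\mu_i$ are the eigenvalues of a \emph{fixed} cone-condition metric serving as a subsolution. That inequality, not uniform ellipticity, is what makes the maximum principle close, and your proposal never formulates it. Relatedly, your appeal to concavity for the third-order terms is off: the paper's $F_{t,p}$ is \emph{convex} in $A$, and in the regime $2\zeta<n$ the needed convexity inequality \eqref{strongconvexity} requires the extra correction term with $\alpha=\tfrac12$, which is a nontrivial refinement forced by the negative $f$.

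Second, your endpoint is too strong for this argument. Running the maximum principle on a test function of the form $\log\lambda_n-\gamma(\phi_t)$ (or $\log\mathrm{tr}_\chi\Omega_t-A\phi_t$) leaves an unabsorbed gradient term coming from the first-derivative identity at the maximum point, and what one actually obtains is $\Delta_\chi\phi_t\leq C(1+|\nabla_\chi\phi_t|^2)$ -- this is Proposition \ref{c2estimateprop}. Passing from that to $\Delta_\chi\phi_t\leq C$ requires a separate blow-up (Liouville-type) argument, namely Proposition 5.1 of \cite{collinsjacobyau}, which your proposal omits entirely by asserting $\mathrm{tr}_\chi\Omega_t(p)\leq C$ directly at the maximum point without explaining how the gradient contribution is absorbed.
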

\begin{proof}
First we prove the following lemma.
\begin{lemma}\label{usefulpropslemma}
For each $0\leq t \leq 1$, let $S_{t,p}$ be the open subset of positive-definite Hermitian matrices satisfying the cone condition for Equation \ref{continuitypath}. Cover $M$ with finitely many holomorphic charts. For a fixed point $p$, the function $F_{t,p} : S_{t,p}\rightarrow \mathbb{R}$ given by $$F_{t,p}(A)=\sum_{k=1}^{n-1} t\frac{c_k}{{n \choose k}} \sigma_{n-k} (A) + \left(tf(p) +(1-t)c_0\right)\sigma_n(A)$$ satisfies the following properties. Let $A$ have eigenvalues $\lambda_i>0$ and let $L$ be any constant larger than the maximum over all the coordinate charts of $\vert \frac{\partial f}{\partial z^{\mu}}\vert$ and $\vert \frac{\partial^2 f}{\partial z^{\mu} \partial \bar{z}^{\mu}} \vert$.   
\begin{enumerate}
\item $S_t$ is a convex open set.
\item The function $g_{t,p}(A) =F_{t,p}(A^{-1})$ extends to a smooth function (of $t,p,A$) on the orthant $\{ \lambda_i \geq 0 \}$ (and $0\leq t \leq 1, p\in M$).
\item $\frac{\partial F_{t,p}}{\partial \lambda_i}(A)<0$.
\item If $\lambda_1 \leq \lambda_2 \ldots$, then 
\begin{gather}
-\lambda_1 \frac{\partial F_{t,p}}{\partial \lambda_1}\geq - \lambda_i \frac{\partial F_{t,p}}{\partial \lambda_i} \ \forall \ i.
\label{smallesteig}
\end{gather}
\item  If $B$ is any Hermitian matrix, then 
\begin{gather}
\displaystyle \sum_{\mu,\nu,\alpha, \beta} \frac{\partial ^2 F_{t,p}}{\partial A_{\mu\nu} \partial A_{\alpha \beta}} \bar{B}_{\nu \mu} B_{\alpha \beta}+  \sum_{i,j}\frac{\partial F_{t,p}}{\partial \lambda_i} \frac{\vert B_{ij} \vert^2}{\lambda_j} -\alpha\sum_i \frac{\partial F_{t,p}}{\partial \lambda_i} \frac{\vert B_{ii} \vert^2}{\lambda_i} \geq 0.
\label{strongconvexity}
\end{gather}
where $\alpha=0$ if $2\zeta \geq n$ and $\alpha=\frac{1}{2}$ otherwise. In particular, for all $1\leq \zeta \leq n-1$, we see that $F_{t,p}$ is convex.
\item $F_{t,p}(A) >0$.
\item There is a constant $C$ independent of $t, p, A$ such that 
\begin{gather}
\frac{F_{t,p}}{C}\leq \displaystyle \sum_k -\lambda_k \frac{\partial F_{t,p}}{\partial \lambda_k} \leq CF_{t,p}.
\label{approxeuler}
\end{gather}
\item $\sum_{\mu}\vert \frac{\partial^2 F_{t,p}(A)}{\partial z^{\mu} \bar{z}^{\mu}} \vert \leq C$ where $C$ is independent of $t, p, A$.
\end{enumerate}
Suppose that there is a matrix $A_0 \in \displaystyle \cap_{0\leq t\leq 1} S_t$ with eigenvalues $0<\mu_1\leq \mu_2\ldots \leq \mu_n$. If $F_t(A,p) = 1$ for all $p\in M$, then 
\begin{enumerate}\setcounter{enumi}{8}
\item There exist constants $\theta, N>0$ independent of $t,p,A$ such that if $tr(A)>N$, then $\displaystyle \sum_i \frac{\partial F_{t,p}}{\partial \lambda_i}(A) (\lambda_i-\mu_i) \geq \theta \left (1-\sum_i \frac{\partial F_{t,p}}{\partial \lambda_i} (A) \right ).$
\item There exists a constant $C$ independent of $t, p, A$ such that for any matrix $B$ and any number $\epsilon>0$, $$\displaystyle \sum_{i,\mu} \left \vert \frac{\partial^2 F_{t,p}(A)}{\partial \lambda_i \partial z^{\mu}} B_{i\mu} \right \vert \leq \frac{C}{\epsilon} - \epsilon \displaystyle \sum_{i, \mu} \frac{\partial F_{t,p}(A)}{\partial \lambda_i} \frac{\vert B_{i\mu} \vert^2}{\lambda_{i}}. $$
\end{enumerate}
\end{lemma}
\begin{proof}
Note that the cone condition is $1>\displaystyle \sum_{k=1}^{n-1}\frac{tc_k}{{n\choose k}} \sigma_{n-k;i}$. The first few statements are proved as follows.
\begin{enumerate}
\item Since $\sigma_{n-k;i}$ is well known to be a convex function, $S_t$ is an open convex set.
\item Trivial.
\item 
\begin{gather}
-\frac{\partial F_t}{\partial \lambda_i} = \frac{1}{\lambda_i} \left ( \sum_{k=1}^{n-1}\frac{tc_k}{{n\choose k}} \frac{1}{\lambda_i}\sigma_{n-k-1;i} +\left (tf+(1-t)c_0 \right )\sigma_{n}\right ). 
\label{formulaforftderi}
\end{gather}
When $0\leq t\leq \frac{1}{2}$ the result is clear because the $\sigma_n$ term is non-negative. If $\frac{1}{2} \leq t \leq 1$, then using the Maclaurin inequality we arrive at the following inequalities.
\begin{gather}
-\lambda_i^2 \frac{\partial F_t}{\partial \lambda_i} \geq \frac{c_{\zeta}\sigma_{n-\zeta-1;i}}{2{n \choose \zeta}} -\vert f _m \vert \sigma_{n-1;i} \geq \frac{c_{\zeta}{n-1 \choose n-\zeta -1}}{2{n \choose \zeta}} \sigma_{n-1;i}^{\frac{{n-2 \choose n-\zeta -2}}{{n-1 \choose n-\zeta -1}}}-\vert f _m \vert \sigma_{n-1;i} \nonumber \\
= \sigma_{n-1;i} ^{\frac{n-\zeta-1}{n-1}} \left ( \frac{c_{\zeta}(n-\zeta)}{2n} -\vert f _m \vert \sigma_{n-1;i} ^{\frac{\zeta}{n-1}} \right ),
\label{formulaforftderi2}
\end{gather}
which is seen to be positive using the cone condition and the assumptions on $f_m$.
\item Note that for $k<n$, $-\lambda_1 \partial_1 \sigma_k \geq -\lambda_i \partial_i \sigma_k$ and for $k=n$, $-\lambda_1 \partial_1 \sigma_n = -\lambda_i \partial_i \sigma_n$. Hence, the result follows.
\item
  From the proof of Lemma 8 in \cite{collinsgabor}, we see that 
\begin{gather}
\displaystyle \sum_{\mu,\nu,\alpha, \beta} \frac{\partial ^2 F_{t,p}}{\partial A_{\mu\nu} \partial A_{\alpha \beta}} \bar{B}_{\nu \mu} B_{\alpha \beta}+ \sum_{i,j}\frac{\partial F_{t,p}}{\partial \lambda_i} \frac{\vert B_{ij} \vert^2}{\lambda_j} \nonumber \\
\geq \sum \frac{tc_k}{{n \choose k}}\frac{B_{\alpha \alpha} \bar{B}_{\mu \mu}}{\lambda_{\alpha} \lambda_{\mu}}\frac{S_{k;\alpha,\mu}+\delta_{\alpha \mu}S_{k;\alpha}}{S_n} +\frac{tf+(1-t)c_0}{S_n}\frac{ B_{\alpha \alpha} \bar{B}_{\mu \mu}}{\lambda_{\alpha}\lambda_{\mu}}.
\label{justbeforeM}
\end{gather}
The matrix $M_{ij} = \sum \frac{tc_k}{{n \choose k}}\left (S_{k;ij}+\delta_{ij}S_{k;i} \right )+ \left (tf+(1-t)c_0 \right)$ can be written as (akin to \cite{FangLaiMa, collinsgabor}) $M=\sum \frac{tc_k}{{n \choose k}}\sum_{\Vert I \Vert =k}\lambda_I E_I  + \left ( tf+(1-t)c_0 \right ) \mathbf{1}$ where for any $k$-tuple $I$, the positive semidefinite matrix $E_I$ is defined to have entries $(E_I)_{ij}=1$ if $i,j \notin I$ and $0$ otherwise; moreover, $\lambda_I=\lambda_{i_1} \lambda_{i_2}\ldots\lambda_{i_k}$, and the matrix $\mathbf{1}$ is defined as $(\mathbf{1})_{ij}=1 \ \forall \ i,j$. \\
If $0\leq t\leq \frac{1}{2}$, it is easy to see by the assumptions on $f$ that $M$ is non-negative (because the constant term is so). Assume $t>\frac{1}{2}$. For future use, we observe that $0 \leq \mathbf{1} \leq nId$. Now we have two cases.
\begin{enumerate}
\item $2\zeta \geq n$ :Using the cone condition and the assumption that $2\zeta \geq n$, we see that $\lambda_I >\left (\frac{c_{\zeta}}{2{n \choose \zeta}}\right )^{\frac{\zeta}{n-\zeta}} \ \forall $ multi-indices $I$ with $\vert I \vert = \zeta$. Recall that $K>0$ is the least eigenvalue of $\sum_{\Vert I\Vert = \zeta} E_I$. Now note that $M>t\left (\frac{c_{\zeta}}{2{n \choose \zeta}} \right )^{\frac{n}{n-\zeta}}KI-t\vert f_m \vert nI$, which is positive-definite using the assumptions on $f_m$.
\item $1\leq \zeta \leq n-1$ : Note that
\begin{gather}
\displaystyle \sum_{\mu,\nu,\alpha, \beta} \frac{\partial ^2 F_{t,p}}{\partial A_{\mu\nu} \partial A_{\alpha \beta}} \bar{B}_{\nu \mu} B_{\alpha \beta}+ \sum_{i,j}\frac{\partial F_{t,p}}{\partial \lambda_i} \frac{\vert B_{ij} \vert^2}{\lambda_j}  - \frac{1}{2}\sum_{i}\frac{\partial F_{t,p}}{\partial \lambda_i} \frac{\vert B_{ii} \vert^2}{\lambda_i} \nonumber \\
\geq \displaystyle \sum_{i,j} \sigma_n M_{ij}\frac{B_{ii}B_{jj}}{\lambda_i \lambda_j} -\frac{1}{2}\sum_i \frac{\partial F_{t,p}}{\partial \lambda_i} \frac{\vert B_{ii} \vert^2}{\lambda_i}.
\label{tempconvex}
\end{gather} 
Using the first inequality in \ref{formulaforftderi2} we see that
\begin{gather}
\displaystyle \sum_{\mu,\nu,\alpha, \beta} \frac{\partial ^2 F_{t,p}}{\partial A_{\mu\nu} \partial A_{\alpha \beta}} \bar{B}_{\nu \mu} B_{\alpha \beta}+ \sum_{i,j}\frac{\partial F_{t,p}}{\partial \lambda_i} \frac{\vert B_{ij} \vert^2}{\lambda_j} - \frac{1}{2}\sum_{i}\frac{\partial F_{t,p}}{\partial \lambda_i} \frac{\vert B_{ii} \vert^2}{\lambda_i} \nonumber \\
\geq 2\left(-\sum_i n\sigma_{n-1;i} \vert f_m \vert \frac{\vert B_{ii} \vert^2}{\lambda_i^3} + \frac{1}{8} \sum_i  \frac{\vert B_{ii} \vert^2}{\lambda_i^3}\frac{c_{\zeta}\sigma_{n-\zeta-1;i}}{{n \choose \zeta}}\right).
\end{gather}
As before, the Newton-Maclaurin inequalities and the assumptions on $\vert f_m \vert$ imply the desired result.

\end{enumerate}
\item If $0\leq t\leq \frac{1}{2}$, it is clear (as before) that $F_t(A)>0$. If $t>\frac{1}{2}$, then $$F_t(A)\geq \frac{c_{\zeta}}{2}\sigma_{n}^{\frac{n-\zeta}{n}}-\vert f_m \vert\sigma_n=\sigma_n^{\frac{n-\zeta}{n}}\left (\frac{c_{\zeta}}{2}-\vert f_m \vert \sigma_n^{\frac{\zeta}{n}}  \right).$$  At this point we see that the cone condition implies that $1>\frac{1}{2}\frac{c_{\zeta}\sigma_{n-\zeta;i}}{{n \choose \zeta}}$ because $t>\frac{1}{2}$. Using the Maclaurin inequality we see that 
\begin{gather}
\frac{2n}{\zeta c_{\zeta}}>\sigma_{n-1;i}^{\frac{n-\zeta}{n-1}} \nonumber \\
\Rightarrow \left(\frac{2n}{\zeta c_{\zeta}}\right)^n>\Pi_{i=1}^n \sigma_{n-1;i}^{\frac{n-\zeta}{n-1}} =\sigma_n^{n-\zeta}.
\end{gather}
Therefore, by the assumptions on $f$, we see that $F_t(A)>0$. 
\item Noting that $\displaystyle \sum_i -\lambda_i \frac{\partial F_t}{\partial \lambda_i} = \sum_k \frac{t(n-k)c_k}{{n \choose k}} \sigma_{n-k}+n \left(tf+(1-t)c_0 \right) \sigma_n$, it is easy to see using the assumptions on $f$ that the result holds. 
\item Clearly, $\vert \frac{\partial^2 F_{t,p}}{\partial z^{\mu} \partial \bar{z}^{\mu}} \vert \leq L \sigma_n(A) \leq C$.
\end{enumerate}
Now we assume that $F_t(A)=1$. 
\begin{enumerate}\setcounter{enumi}{8}
\item Firstly, we note that $N<\mathrm{tr}(A)=\displaystyle \sum_{i=1}^n \lambda_i \leq n\lambda_n$. As can be seen from \cite{collinsgabor}, the above properties imply the result. Indeed, suppose the result is false. Noting that $\theta$ is allowed to depend on $\mu_i$, for a given $\mu_i$ there is a sequence $\lambda_{i,N}, t_N, p_N$ (where $N\rightarrow \infty$) such that the following inequality holds. (We drop the dependence on $N$ for ease of notation.) 
\begin{gather}
\displaystyle \sum_i \frac{\partial F_{t,p}}{\partial \lambda_i}(\lambda) (\lambda_i-\mu_i) < \frac{1}{N} \left (1-\sum_i \frac{\partial F_{t,p}}{\partial \lambda_i} (A) \right ).
\label{ineq:collinsgaborcontradictoryassumption}
\end{gather} 
Since $\frac{\partial F_{t,p}}{\partial \lambda_i}(\lambda) \lambda_i \geq -CF(A)=-C$, 
\begin{gather}
\displaystyle -C-\frac{1}{N}+\sum_i \left(-\frac{\partial F_{t,p}}{\partial \lambda_i}(\lambda) \right)\left(\mu_i-\frac{1}{N} \right)<0.
\end{gather}
If $-\frac{\partial F_{t,p}}{\partial \lambda_1}(\lambda)$ is unbounded (from above) then for sufficiently large $N$, we have a contradiction. Thus $-\frac{\partial F_{t,p}}{\partial \lambda_1}(\lambda)>0$ is bounded from above and since $-n\lambda_1 \frac{\partial F_{t,p}}{\partial \lambda_1}(\lambda)\geq \displaystyle -\sum_i \lambda_i \frac{\partial F_{t,p}}{\partial \lambda_i}(\lambda) \geq \frac{1}{C}$, we see that $\lambda_1>\frac{1}{C'}$ for some $C'$ independent of $t,p,A$. Therefore, the function $g_{t,p}$ is smoothly controlled (independent of $N$). \\
\indent As in \cite{collinsgabor}, denote by $\tilde{F}_{t,p}(\lambda)$ the smooth convex function $\displaystyle \lim_{\lambda_n\rightarrow \infty} F_{t,p}(\lambda)=g_{t,p}(\lambda_1,\ldots,\lambda_{n-1},0)$. By convexity, 
\begin{gather}
\tilde{F}_{t,p}(\mu)\geq \tilde{F}_{t,p}(\lambda)+\displaystyle \sum_{i=1}^{n-1}\frac{\partial \tilde{F}_{t,p}}{\partial \lambda_i}(\lambda) (\mu_i-\lambda_i).
\label{ineq:convexityforsmaller}
\end{gather}
Therefore,
\begin{gather}
\displaystyle \sum_i \frac{\partial F_{t,p}}{\partial \lambda_i}(\lambda) (\lambda_i-\mu_i) \geq \displaystyle \sum_{i=1}^{n} \left(\frac{\partial F_{t,p}}{\partial \lambda_i}(\lambda)- \frac{\partial \tilde{F}_{t,p}}{\partial \lambda_i}(\lambda)\right)(\lambda_i-\mu_i)+\tilde{F}_{t,p}(\lambda)-\tilde{F}_{t,p}(\mu).
\label{ineq:afterconvexityforsmaller}
\end{gather}
Since $F_{t,p}(\lambda)=1$ for sufficiently large $N$, $\tilde{F}_{t,p}(\lambda_i)> 1-\tau$ for a small $\tau$. Since $\mu$ satisfies the cone condition, $\tilde{F}_{t,p}(\mu)\leq 1-\delta$ for some $\delta>0$. It is easy to see that
\begin{gather}
\frac{\partial F_{t,p}}{\partial \lambda_i}(\lambda)- \frac{\partial \tilde{F}_{t,p}}{\partial \lambda_i}(\lambda)=-x_i^2 \left(\frac{\partial g_{t,p}(x_1,\ldots,x_n)}{\partial x_i}- \frac{\partial g_{t,p}(x_1,\ldots,0)}{\partial x_i}\right)\nonumber \\
=-x_i^2 x_n h_{t,p}(x),
\label{eq:intermsofg}
\end{gather}
where $x_i=\frac{1}{\lambda_i}$, and $h_{t,p}$ is smooth and uniformly controlled independent of $N$. Therefore, for sufficiently large $N$ (and small $\tau$),
\begin{gather}
\displaystyle \sum_i \frac{\partial F_{t,p}}{\partial \lambda_i}(\lambda) (\lambda_i-\mu_i) \geq \frac{\delta}{2}.
\label{ineq:forlargeN}
\end{gather}
Since $\frac{\partial F}{\partial \lambda_i}$ is bounded, we arrive at a contradiction between Inequalities \ref{ineq:forlargeN} and \ref{ineq:collinsgaborcontradictoryassumption} for large $N$.
\item Note that $\vert \frac{\partial^2 F_{t,p}(A)}{\partial \lambda_i \partial z^{\mu}} \vert \leq  \frac{L \sigma_n}{\lambda_i} $. Akin to the above properties, one can prove that $-\lambda_i^2 \frac{\partial F}{\partial \lambda_i} \geq \frac{\sigma_{n-1;i}}{C'}$ for some $C'$. Now, 
\begin{align}
\displaystyle \sum_i \frac{L\sigma_n \vert B_{i\mu} \vert }{\lambda_i} &\leq -\displaystyle \sum_{i,\mu}LC'\frac{\partial F_{t,p}}{\partial \lambda_i} \vert B_{i\mu} \vert \nonumber \\
&\leq -\frac{C''}{\epsilon} \sum_i \frac{\partial F_{t,p}}{\partial \lambda_i} \lambda_i -\epsilon \sum_{i,\mu} \frac{\partial F_{t,p}}{\partial \lambda_i} \frac{\vert B_{i\mu} \vert^2}{\lambda_i} \nonumber \\
&\leq \frac{C}{\epsilon} -\epsilon \sum_{i,\mu} \frac{\partial F_{t,p}}{\partial \lambda_i} \frac{\vert B_{i\mu} \vert^2}{\lambda_i}
\end{align}
\end{enumerate}
\end{proof}
 The aforementioned lemma is enough to guarantee that Proposition \ref{secondderivative} holds when $2\zeta \geq n$ by the method of \cite{pindhym}, when followed word-to-word. In fact, such an estimate works for any function $F_{t,p}$ that satisfies the above conditions (along with property \ref{strongconvexity} for $\alpha=0$). However, when $\alpha=\frac{1}{2}$, we need to do more work. To this end, we prove a general proposition about estimates.
\begin{proposition}
For any family of PDE of the form $F_{t,p} (A)=1$, where the endomorphism $A$ is given by $A_{k}^i :=\chi^{i\bar j}(\Omega_{\phi_t})_{k\bar j}$, the solution $\phi_t$ (normalised to satisfy $\inf \phi_t =0$) satisfies the  \emph{a priori} estimate 
\begin{gather}
 \Delta_{\chi} \phi_t \leq C (1+\vert \nabla_{\chi} \phi_t \vert^2),
\label{c2estimate}
\end{gather}
if $\Vert \phi_t \Vert_{C^0}$ is assumed to be uniformly bounded, all the properties in Lemma \ref{usefulpropslemma} are satisfied by a symmetric function $F_{t,p}$ whose domain $S_{t,p}$ is an open convex cone lying in the set of Hermitian positive-definite matrices, and there is a K\"ahler metric $\Omega$ in the class $[\Omega_0]$ such that $\chi(p)^{i\bar j}\Omega(p)_{k\bar j}\in \displaystyle \cap_{0\leq t \leq 1} S_{t,p}$ for all $p\in M$. 
\label{c2estimateprop}
\end{proposition}
\begin{proof}
 The following calculations are inspired from similar ones in \cite{collinsjacobyau, gabor}. \\
\indent For a function (akin to \cite{phsturm}) $\gamma : [0,  \infty) \rightarrow \mathbb{R}$ defined as $$ \gamma (x) = Ax - \frac{1}{1+x}+1,$$ where $A>1$ is a constant to be chosen later, the following properties hold. We suppress the $t$ subscript for $\phi_t$ in this proof.
\begin{gather}
Ax \leq \gamma (x) \leq Ax+1 \nonumber \\
A \leq \gamma'(x) \leq 2A \nonumber \\
\gamma''(x)=-\frac{2}{(1+x)^3} \leq 0.
\label{propsofgamma}
\end{gather}
Let $G(\Omega_{\phi})=-\gamma(\phi) + \ln (\lambda_n)$ where $\lambda_1 \leq \lambda_2 \leq \ldots \lambda_n$ are the eigenvalues of $A$. Note that $G$ is a continuous function. Suppose the maximum of $G$ is attained at a point $p$. Choose holomorphic normal coordinates for $\chi$ at $p$ such that $\Omega_{\phi}$ is diagonal. In general, $\lambda_n$ is not smooth near $p$. The standard remedy is to consider a small constant diagonal matrix $B$  such that $0=B_{nn}<B_{n-1 n-1}<\ldots<B_{11}$ and a new matrix-valued function $\tilde{A}=A-B$. $B$ is chosen to be small enough so that $\tilde{A}$ is still positive-definite and lies in $S_t$ for all points $q$ in a neighbourhood $U$ of $p$. The eigenvalues of $\tilde{A}$ are distinct and are hence smooth in $U$. The function $\tilde{G} = G(\tilde{A})$ continues to achieve a local maximum at $p$. In what follows, the notation $H_{,i}, H_{,i\bar{j}}$ denotes partial derivatives (as opposed to covariant derivatives). At $p$, we differentiate with respect to $z^i, \bar{z}^i$ to obtain the following. 
\begin{gather}
0=\tilde{G}_{,i} (\tilde{A}) = -\gamma' \phi_{,i}+\frac{1}{\lambda_n(\tilde{A})}\frac{\partial \lambda_n}{\partial \tilde{A}_{\mu \nu}}\tilde{A}_{\mu \nu, i}
\nonumber \\
 = -\gamma' \phi_{,i} +\frac{(\Omega_{\phi})_{n \bar{n}, i}}{\lambda_n(A)}
\label{firstofG}
\end{gather} 
We drop the $t,p$ subscripts from now onwards. We differentiate again to obtain the following.
\begin{align}
0&\geq \displaystyle -\sum_i \frac{\partial F}{\partial \lambda_i}(A) \tilde{G}_{,i\bar{i}} (\tilde{A})=\sum_i \left(- \frac{\partial F}{\partial \lambda_i}(A) \right ) \Bigg (-\gamma'' \vert \phi_{,i} \vert^2 -\gamma' \phi_{,i\bar{i}}-\frac{\vert (\Omega_{\phi})_{n\bar{n},i} \vert^2}{\lambda_n^2(A)} \nonumber \\
&+\frac{1}{\lambda_n(A)}\frac{\partial^2 \lambda_n(\tilde{A})}{\partial \tilde{A}_{p q} \partial \tilde{A}_{r s}}(\Omega_{\phi})_{p q, i} (\Omega_{\phi})_{r s, \bar{i}} + \frac{(\Omega_{\phi})_{n\bar{n}, i\bar{i}} + \lambda_n (A) R_{n\bar{n} i\bar{i}}}{\lambda_n(A)} \Bigg ).
\label{secondofG}
\end{align}
From now onwards, we denote $\lambda_i = \lambda_i(A)$ and $\tilde{\lambda}_i = \lambda_i(\tilde{A})= \lambda_i-B_{ii}$. Assume that $\lambda_n > N$ in order to use property $9$ of Lemma \ref{usefulpropslemma}. Assume that $\vert R_{i\bar{i} j\bar{j}} \vert\leq C_1 \ \forall \ i,j$.  Using the formulae from \cite{gabor} we see that
\begin{align}
0&\geq \sum_i \left(-\frac{\partial F}{\partial \lambda_i} \right ) \Bigg (-\gamma'' \vert \phi_{,i} \vert^2 -\frac{\vert (\Omega_{\phi})_{n\bar{n},i} \vert^2}{\lambda_n^2} + \frac{(\Omega_{\phi})_{n\bar{n}, i\bar{i}} }{\lambda_n} \nonumber \\
&+\frac{1}{\lambda_n} \sum_{p<n}\frac{\vert(\Omega_{\phi})_{p\bar{n},i}\vert^2 + \vert (\Omega_{\phi})_{n\bar{p},i} \vert^2}{\lambda_n-\tilde{\lambda}_p}  \Bigg ) + \frac{\gamma' \theta}{2} \left (1- \sum_i \frac{\partial F}{\partial \lambda_i}\right ) 
\label{derivofGbeforePDE}
\end{align}
where we assumed that $\frac{\gamma' \theta}{2} \geq \frac{A\theta}{2} \geq C_1$. Now we compute the derivatives of the PDE $F(A,p)=1$. 
\begin{align}
F_{,n} + \frac{\partial F}{\partial A_{pq}}(A_{pq})_{,n}&=0 \nonumber \\
\Rightarrow F_{,n} + \displaystyle \sum_i \frac{\partial F}{\partial \lambda_i} (\Omega_{\phi})_{i\bar{i},n}&=0.
\label{firstderiPDE}
\end{align}
We differentiate again to obtain the following equality.
\begin{align}
0& =F_{,n\bar{n}}+\displaystyle \sum_i2Re(\frac{\partial F_{,n}}{\partial \lambda_i}(\Omega_{\phi})_{i\bar{i},\bar{n}}) + \sum_{p,q,r,s} \frac{\partial^2 F}{\partial A_{pq} \partial A_{rs}}(\Omega_{\phi})_{p\bar{q},n} (\Omega_{\phi})_{r\bar{s},n}\nonumber \\
 &+\sum_i \frac{\partial F}{\partial \lambda_i}((\Omega_{\phi})_{i\bar{i},n\bar{n}}+R_{i\bar{i}n\bar{n}}\lambda_i) 
\label{secondderiPDE}
\end{align}
We substitute the fourth order term from Equation \ref{secondderiPDE} into \ref{derivofGbeforePDE}, and use the assumptions of Lemma \ref{usefulpropslemma} to get the following.
\begin{align}
0&\geq \frac{\gamma' \theta}{2} \left (1- \sum_i \frac{\partial F}{\partial \lambda_i}\right )  + \displaystyle \sum_i \frac{\partial F}{\partial \lambda_i} \left (\gamma'' \vert \phi_{,i} \vert^2+ \frac{\vert (\Omega_{\phi})_{n\bar{n},i} \vert^2}{\lambda_n^2} \right )+\frac{F_{,n\bar{n}}}{\lambda_n} \nonumber \\
&+\frac{1}{\lambda_n} \Bigg (C_1 \sum_i \lambda_i \frac{\partial F}{\partial \lambda_i} -C+\frac{1}{2}\sum_i \frac{\partial F}{\partial \lambda_i} \frac{\vert(\Omega_{\phi})_{i\bar{i},n}\vert^2}{\lambda_i}-\sum_{i,j}\frac{\partial F}{\partial \lambda_i} \frac{\vert (\Omega_{\phi})_{i\bar{j},n} \vert^2}{\lambda_j} \nonumber \\
&+\frac{1}{2}\sum_i \frac{\partial F}{\partial \lambda_i} \frac{\vert (\Omega_{\phi})_{i\bar{i},n} \vert^2}{\lambda_i} \Bigg ) \nonumber \\
&\geq \frac{\gamma' \theta}{2} \left (1- \sum_i \frac{\partial F}{\partial \lambda_i}\right ) - \frac{C}{\lambda_n}+\frac{\partial F}{\partial \lambda_n}\frac{\vert (\Omega_{\phi})_{n\bar{n},n}\vert^2}{\lambda_n^2}.
\label{almostthere}
\end{align}
At this juncture, we use Equation \ref{firstofG} in \ref{almostthere} to get the following inequality.
\begin{align}
0&\geq \frac{\gamma' \theta}{2}  - \frac{C}{\lambda_n}+\lambda_n\frac{\partial F}{\partial \lambda_n}\frac{(\gamma')^2 \vert \phi_{,n} \vert^2}{\lambda_n}.
\label{almostalmost}
\end{align}
Assume that $\lambda_n >1$ and that $A\frac{\theta}{4} \geq C$. Note that $-C\leq \lambda_n \frac{\partial F}{\partial \lambda_n} \leq 0$ since $$\displaystyle -C \leq \sum_i \lambda_i \frac{\partial F}{\partial \lambda_i} $$ by assumption. Thus,
\begin{align}
0&\geq \frac{A\theta}{4}-\frac{4CA^2\vert \phi_{,n} \vert^2}{\lambda_n}.
\end{align}
Therefore, $\lambda_n \leq C(1+\vert \nabla \phi \vert^2)$.
\end{proof}
\indent Using Proposition \ref{c2estimateprop} and Lemma \ref{usefulpropslemma}, we see that in our case, indeed $$\Delta \phi \leq C(1+\vert \nabla \phi \vert^2).$$ Now, Proposition 5.1 of \cite{collinsjacobyau} (which is based on a blow-up argument) shows that $\Delta \phi \leq C$, as desired. 
\end{proof}
\indent At this point, we note that since $F_t(A)$ is convex on a convex open set, the complex version of Evans-Krylov theory akin to \cite{Siu} would have implied the $C^{2,\alpha}$ \emph{a priori} estimates on $\phi_t$ if $f$ was constant. In our case, despite $f$ not being a constant, we can still get $C^{2,\alpha}$ estimates on $\phi_t$  using a blow-up argument (like in the proof of Lemma 6.2 in \cite{collinsjacobyau} for instance). Using Schauder theory, we can get $C^{k,\alpha}$ bounds (depending on $k$) for every $k$. Applying the Arzela-Ascoli theorem and choosing a diagonal subsequence, we get closedness of the set of $t \in [0,1]$ such that Equation \ref{continuitypath} has a smooth solution. Therefore, we have a smooth solution at $t=1$. \qed
\section{Concentration of mass}\label{concentration}

We begin with a definition of what it means for a possibly singular $(1,1)$ current to satisfy the cone condition. Let $\rho:[0,1]\rightarrow \RR$ be a smooth function such that $$|\mathbb{S}^{2n-1}|\int_0^1\rho(t)t^{2n-1}dt = 1,$$ where $|\mathbb{S}^{2n-1}|$ is the Lebesgue measure of the unit sphere in $\CC^n$. For any $1>\delta>0$, and any $L^1$-function $\varphi:B_1(0)\rightarrow\RR$ on the unit ball $B_1(0)$, the {\em $\delta$-mollification} is defined to be $$\varphi_\delta(x) = \delta^{-2n}\int_{B_1(0)}\varphi(x-y)\rho\Big(\frac{|y|}{\delta}\Big)dy.$$ Then we have the following  definition from \cite{gchen}.

\begin{definition}\label{degenconedef}
Let $\Theta$ be a closed, positive $(1,1)$ current. We say that $\Theta$ satisfies the \emph{($\vep$-)uniform cone condition} 
\begin{equation}\label{singcone}
n(1-\vep)\Theta^{n-1} - \sum_{k=1}^{n-1} c_k k \chi^{n-k} \Theta^{k-1} \geq 0,
\end{equation}
if for any coordinate chart $U$ with $\Theta\Big|_U = \ddbar\varphi_U$, on $U_\delta:=\{x\in M~|~ B(x,\delta)\subset U\}$ (where $B(x,\delta)$ is a coordinate Euclidean ball of radius $\delta$ centred at $x$), we have $$n(1-\vep)(\ddbar\varphi_{U,\delta})^{n-1} (x) - \sum_{k=1}^{n-1} c_k k \chi_0^{n-k} (\ddbar\varphi_{U,\delta})^{k-1} (x) \geq 0,$$  for any K\"ahler metric $\chi_0$ on $U$ with constant coefficients satisfying $\chi_0\leq \chi$ on $B(x,\delta)$. Here $\varphi_{U,\delta}$ are the mollifications of $\varphi_U$ as above. \\
\indent If there exist strictly positive sequences $\vep_i, \mu_i \rightarrow 0$ such that $\Theta+\mu_i \chi$ satisfies the ($\vep_i$-) uniform cone condition, then we see that $\Theta$ satisfies the \emph{degenerate cone condition}. 
\end{definition}
\begin{remark}
If $\Theta$ is a smooth form, clearly the definitions above coincide with the usual pointwise definition. In particular, the degenerate cone condition boils down to the pointwise condition $n\Theta^{n-1} - \sum_{k=1}^{n-1} c_k k \chi^{n-k} \Theta^{k-1} \geq 0$.
\end{remark}
 The main goal of this section is to prove the following ``mass concentration" result. 
 \begin{theorem}\label{thm:conc-mass}
  Let $(M,\chi)$ be a connected K\"ahler manifold and $\Omega_0$ another K\"ahler metric. Suppose for all $t>0$, there exists a $\hat\Omega_t \in (1+t)[\Omega_0]$ such that $$n\hat \Omega_{t}^{n-1} - \sum_{k=1}^{n-1} c_k k \chi^{n-k} \hat\Omega_{t}^{k-1} >0.$$ Then for any divisor $Y$, there exists a $\beta_Y>0$ and a current $\Theta \in [\Omega_0]$ such that $\Theta\geq \be_Y [Y]$ and $\Theta$ satisfies the degenerate cone condition  in the sense above. If $G$ is a connected compact Lie group acting on $M$ such that $\chi$ and $Y$ are $G$-invariant, then $\Theta$ can be chosen to be $G$-invariant as well.
 \end{theorem}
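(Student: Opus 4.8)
The plan is to establish Theorem~\ref{thm:conc-mass} by a Demailly-style mass concentration argument adapted to the generalised Monge-Amp\`ere cone condition. The idea is to solve a sequence of perturbed generalised Monge-Amp\`ere equations on $M$ in the classes $(1+t_j)[\Omega_0]$ with a right-hand side that forces mass to accumulate along the divisor $Y$, and then pass to a weak limit as $t_j\to 0$. First I would pick a K\"ahler form $\omega_Y\in c_1([Y])$ and, for small $\tau>0$, let $s_Y$ be a defining section of $[Y]$ with a smooth Hermitian metric $h$ so that $\log|s_Y|^2_h$ is a $\chi$-psh-type weight up to a bounded form; the quasi-psh function $\tau\log|s_Y|^2_h$ has $\ddbar$ of it bounded below by $-C\tau\chi$ and has a $-\infty$ pole exactly along $Y$ with Lelong number $\tau$. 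The target is to produce $\Theta\in[\Omega_0]$ dominating $\beta_Y[Y]$ and satisfying the degenerate cone condition; the Lelong number $\beta_Y$ will come out of the construction.

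\textbf{Step 1 (Solve perturbed equations).} For each $t>0$ use the hypothesis that $\hat\Omega_t\in(1+t)[\Omega_0]$ satisfies the strict cone condition, together with Theorem~\ref{purePDEthm} (the equivalence $(1)\iff(2)$), to produce a smooth solution $\Omega_t\in(1+t)[\Omega_0]$ of a generalised Monge-Amp\`ere equation whose right-hand side $F_t\chi^n$ is chosen so that $\Omega_t \geq \beta_Y\, \omega_Y$ plus an error, i.e.\ so that $\Omega_t$ is forced to be large near $Y$. Concretely one arranges $F_t$ to blow up or to be sufficiently large in a shrinking tube around $Y$ while keeping the integral normalisation~\eqref{integralcondition} (adjusted by the $a_t$-type constant) and the bound~\eqref{eqn:f} on the negative part; this is exactly the kind of maneuver Chen uses in \cite{gchen}, and the PDE solvability is handed to us by the previous section. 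The key point is that along the continuity method the cone condition is preserved, so $n\Omega_t^{n-1}-\sum_k c_k k\chi^{n-k}\Omega_t^{k-1}>0$ for all $t$.

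\textbf{Step 2 (Uniform mass bound and weak limit).} The forms $\Omega_t$ represent $(1+t)[\Omega_0]$, so they have uniformly bounded mass; by weak compactness of positive currents, a subsequence $\Omega_{t_j}$ converges weakly to a closed positive $(1,1)$ current $\Theta\in[\Omega_0]$. The construction in Step~1 ensures $\Omega_{t_j}\geq 2\beta_Y[Y]_\epsilon$ in a suitable regularised sense, so the weak limit satisfies $\Theta\geq\beta_Y[Y]$ by lower semicontinuity of Lelong numbers / the comparison of currents. To get the \emph{degenerate} cone condition, observe that for each fixed mollification parameter $\delta$ the mollified forms $(\ddbar\varphi_{U,\delta})$ of $\Theta+\mu\chi$ are weak limits of the mollified smooth forms coming from $\Omega_{t_j}+\mu\chi$; since each $\Omega_{t_j}$ satisfies the pointwise cone condition and the cone condition $n(1-\vep)\Theta^{n-1}-\sum_k c_k k\chi^{n-k}\Theta^{k-1}\geq0$ is expressed through G\r{a}rding-type concavity of the symmetric functions $S_{n-k;i}$ (which is preserved under mollification up to an $\vep_j$ loss, because convolution is an average and the relevant cone of matrices is convex by Lemma~\ref{usefulpropslemma}(1)), one extracts sequences $\vep_j,\mu_j\to0$ with $\Theta+\mu_j\chi$ satisfying the $\vep_j$-uniform cone condition in the sense of Definition~\ref{degenconedef}. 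This matches exactly the formulation of the degenerate cone condition.

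\textbf{Step 3 ($G$-invariance).} If $G$ is connected, compact and preserves $\chi$ and $Y$, then averaging: replace each $\Omega_{t_j}$ by its $G$-average $\int_G g^*\Omega_{t_j}\,dg$, which stays in the same K\"ahler class (as $G$ is connected), stays positive, and continues to satisfy the cone condition because the cone of admissible forms is convex (Lemma~\ref{usefulpropslemma}(1)) and $G$-translates of an admissible form are admissible since $\chi$ and the $c_k$ are $G$-invariant. The averaged sequence still dominates $\beta_Y[Y]$ because $[Y]$ is $G$-invariant. Passing to the weak limit gives a $G$-invariant $\Theta$.

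\textbf{The main obstacle} I anticipate is Step~2's claim that the degenerate cone condition passes to the weak limit, and more precisely the interplay between weak convergence of currents and the pointwise mollified cone inequality: one must check that the mollifications $\varphi_{U,\delta}$ of the potentials of $\Theta$ are genuinely limits (in $C^\infty_{loc}$ after fixing $\delta$) of the mollified potentials of $\Omega_{t_j}$, and that the loss from switching between "$\Omega_{t_j}$ satisfies the sharp cone condition" and "$\Theta+\mu_j\chi$ satisfies the $\vep_j$-cone condition" can be made to go to zero simultaneously. This is where Chen's technique of carefully tracking the constants in the concavity inequalities (and the role of the auxiliary constant-coefficient metrics $\chi_0\leq\chi$ in Definition~\ref{degenconedef}) must be transplanted to the generalised setting; the convexity statements collected in Lemma~\ref{usefulpropslemma} are precisely what make this transplant possible. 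A secondary technical point is verifying that the forced right-hand side $F_t$ in Step~1 can be chosen to respect both the integral identity and the lower bound~\eqref{eqn:f}; this requires the freedom afforded by the $a_t\chi^n$ correction term and a quantitative estimate on how concentrated $F_t$ can be made while keeping $\int_M F_t\chi^n$ controlled.
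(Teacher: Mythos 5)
Your overall route is the same as the paper's: solve perturbed generalised Monge-Amp\`ere equations in $(1+t)[\Omega_0]$ whose right-hand side is built from a Demailly--Paun type form concentrating along $Y$, take a weak limit, verify the degenerate cone condition by convexity/Jensen after adding $\mu_i\chi$, and average over $G$ at the level of the smooth approximants. The cone-condition and $G$-invariance steps are essentially the paper's argument (the quantitative conversion from the non-strict cone condition for the approximants to the $\vep_i$-uniform condition after adding $\mu_i\chi$ is exactly Lemma \ref{lem:strictcone}, which you gesture at with ``up to an $\vep_j$ loss'' but should make explicit).

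The genuine gap is in your justification of $\Theta\geq\be_Y[Y]$, which is the heart of the theorem. In Step 1 you claim the right-hand side can be arranged ``so that $\Omega_t\geq\beta_Y\,\omega_Y$ plus an error'', and in Step 2 that ``$\Omega_{t_j}\geq 2\beta_Y[Y]_\epsilon$ in a suitable regularised sense'', concluding by lower semicontinuity of Lelong numbers. Neither mechanism works: the $\Omega_{t_j}$ are smooth forms with zero Lelong numbers, they do not dominate any regularisation of $[Y]$ with a uniform constant, and making $f_t$ large near $Y$ does not force $\Omega_t$ to be pointwise large there --- the equation only gives a lower bound on $\Omega_t^n$. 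The actual argument is quantitative: one must prove the analogue of Lemma \ref{pos-mass}, i.e.\ a uniform lower bound $\int_{U\cap V_t}\Omega_t\wedge\chi^{n-1}\geq \delta_U>0$ on shrinking tubes $V_t$ around each point of $Y$. This uses the equation in the form $\Omega_t^n\geq f_t\chi^n\geq \chi_t^n-\chi^n$ together with the Demailly--Paun eigenvalue dichotomy: off a set $E_\delta$ of small $\chi_t^n$-measure the product $\la_2\cdots\la_n$ is bounded, whence $\Omega_t\geq \frac{\delta}{C}\bigl(1-\chi^n/\chi_t^n\bigr)\chi_t$ there, and $\chi_t$ carries a definite amount of mass in the tube. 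Even granted this, one still needs the Skoda--El Mir extension theorem and the support theorem to write $\mathbf{1}_Y\Theta=\sum_i\be_i[Y_i]$, and the mass estimate to see each $\be_i>0$, so that $\Theta\geq\be_Y[Y]$ with $\be_Y=\min_i\be_i$. As written, your proposal asserts the conclusion of this step rather than proving it, and the tool you cite (semicontinuity of Lelong numbers along the smooth approximants) cannot prove it; you also flag the degenerate cone condition, rather than this step, as the main obstacle, which inverts where the real work lies.
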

 
 \begin{proof} The  proof is essentially a combination of the ideas in \cite{gchen} and \cite{dp}.  Suppose that $M$ be covered by coordinate balls $\{B_j\}_{j=1}^N$, and that $Y$ is given by the vanishing of coordinate functions $\{g_{j}\}$ on $B_j$.  Let $\{\theta_j\}$ be the partition of unity subordinate to $B_j$. For $0\leq t<<1$, define $$\psi_{t} = \log\Big(\sum_{j}(\theta_j\sum_{k}|g_{j,k}|^2) + t^2\Big),~\chi_{t} = \chi+A^{-1}\ddbar\psi_{t},$$ where we pick a suitable $A>>1$ below.  For $t$ small enough, $\chi_{t}$ is a smooth K\"ahler form on $M$ which concentrates near $Y$,  and $\chi_{t} >  (1-C A^{-1})\chi$  for some fixed $C$ (independent of $t$).  Now consider the following equation:\
 \begin{gather}
\Omega_{t}^n = \sum_{k=1}^{n-1} c_k \chi^{n-k} \Omega_{t}^{k}+f_{t}\chi^n,
\label{pertgenmaeqn}
\end{gather}
where $\Omega_{t} \in (1+t)[\Omega_0]$ and $$f_{t} = \frac{\chi_{t}^n}{\chi^n} - 1 + A_t,$$ and the constant $A_t$ is chosen so that $$\displaystyle \int_M \Omega_{t} ^n = \sum_{k=1}^{n-1} \int_M c_k \chi^{n-k} \Omega_{t}^{k}+ \int_M f_{t} \chi^n.$$ Note that $A_t>0$ for all $t$. Also,  for $\vep<<1$, we have that $f_{t} > f_m$, and hence by Theorem \ref{purePDEthm}, there is a solution  $\Omega_{t}\in (1+t)[\Omega_0]$ to \eqref{pertgenmaeqn} satisfying the cone condition  $$n\Omega_{t}^{n-1} - \sum_{k=1}^{n-1} c_k k \chi^{n-k} \Omega_{t}^{k-1} >0.$$ Now the family of currents $\Omega_{t}$ is bounded in mass since $$\int_M\Omega_{t}\wedge \chi^{n-1} = (1+t)[\Omega_0]\cdot[\chi^{n-1}]<C.$$ Define $\tilde{\Omega}_t$ as the average of $\Omega_t$ over $G$. Since the cohomology class of $\Omega_t$ does not change by averaging over a compact connected Lie group, the family $\tilde{\Omega}_t$ is also bounded in mass.  Hence, there exists a sequence $t_i\rightarrow 0^+$ such that $\tilde{\Omega}_{t_i,\vep}\rightarrow \Theta$, where $\Theta$ is a non-negative current in the class $\al$. Clearly, $\Theta$ is $G$-invariant. We now show that $\Theta \geq \be_Y[Y]$ for some  $\be_Y>0$ following the line of argument in \cite{dp}. First  we have the following simple observation. 
\begin{lemma}\label{pos-mass}
For any neighbourhood $U$ of a point $y\in Y$, there exist constants $\delta_U,t_U>0$ such that for all $t<t_U$ the following inequality holds. $$\int_{U\cap V_t}\Omega_{t}\wedge\chi^{n-1} > \delta_U,$$ where $V_t = \{\psi_0<\log t\}.$
\end{lemma}
\begin{proof}  Let $0\leq \la_{1}(z)\leq \cdots\leq \la_n(z)$ be the eigenvalues  of $\Omega_{t}$ with respect to $\chi_t$. Since $\Omega_t^{n-1}\wedge \chi_t\geq \la_2\cdots\la_n \chi_t^n$, we have that $$\int_{M} \la_2\cdots\la_n \chi_t^n \leq \int_M\Omega_t^{n-1}\wedge \chi_t = (1+t)^{n-1}[\Omega_0^{n-1}]\cdot[\chi] \leq C.$$ In particular, for any $\delta>0$, if $E_\delta := \{z\in M~|~\la_2(x)\cdots\la_n(x) > C\delta^{-1}\} $, then $$\int_{E_\delta}\chi_t^n\leq \delta.$$ By Lemma 2.1 in \cite{dp}, since $\chi< 2\chi_t$, we have that $$\int_{E_\delta}\chi_t\wedge\chi^{n-1} < 2^{n-1}\int_{E_\delta}\chi_t^n< 2^{n-1}\delta.$$ Next, by \cite[Lemma 2.1(iii)]{dp}, there exists a $\delta(U)$ such that for all $t<<1$, $$\int_{U\cap V_t\cap E_{\delta}^c}\chi_t\wedge \chi^{n-1} = \int_{U_\cap V_t}\chi_t\wedge\chi^{n-1} - \int_{U\cap V_t\cap E_\delta}\chi_t\wedge\chi^{n-1} \geq \delta(U) -2^{n-1}\delta = \frac{3}{4}\delta(U),$$ where we let $\delta := \frac{\delta(U)}{2^{n+1}}$.
On the other hand, from our equation, $\Omega_t^n \geq f_t\chi^n \geq \chi_t^n - \chi^n.$ So if $z\in E_\delta^c$, then  $$\Omega_t(z) \geq \frac{\delta}{C}\left (1-\frac{\chi^n(z)}{\chi_t^n(z)} \right)\chi_t(z).$$  Integrating, 
\begin{align*}
\int_{U\cap V_t}\Omega_{t}\wedge\chi^{n-1} \geq \int_{U\cap V_t\cap E_\delta^c}\Omega_{t}\wedge\chi^{n-1}&\geq \frac{\delta}{C}\Big(\int_{U\cap V_t\cap E_\delta^c}\chi_t\wedge\chi^{n-1}  -  \int_{U\cap V_t\cap E_\delta^c}\frac{\chi^n}{\chi_t^n}\chi_t\wedge \chi^{n-1}\Big) \\
&\geq \frac{\delta}{C}\Big(\frac{3\delta(U)}{4} -    \int_{U\cap V_t\cap E_\delta^c}\frac{\chi^n}{\chi_t^n}\chi_t\wedge \chi^{n-1}\Big).
\end{align*}
For the second term we estimate $$ \int_{U\cap V_t\cap E_\delta^c}\frac{\chi^n}{\chi_t^n}\chi_t\wedge \chi^{n-1} < 2^{n-1}\int_{U\cap V_t}\chi^n,$$ and hence it can be made smaller than $\delta(U)/4$ if we choose $t<t_U$ for some $t_U$ depending only on $\chi$ and $Y$. The proof of the Lemma is then completed by choosing $\delta_U = \frac{\delta\delta(U)}{2C}.$

\end{proof}
Now, as in \cite{dp}, consider the current $\mathbf{1}_Y\Theta.$ By the Skoda-El Mir extension theorem, $\mathbf{1}_Y\Theta$ is a closed, non-negative $(1,1)$ current supported on $Y$. By standard support theorems,  $\mathbf{1}_Y\Theta = \displaystyle \sum_i \be_i [Y_i]$ for some $\be_i\geq 0$, where $Y_i$ are the irreducible components of $Y$. It can be easily seen using Lemma \ref{pos-mass} that $\be_i>0 \ \forall \ i$. Hence $\Theta \geq \be_Y[Y]$ on all of $M$, where $\be_Y$ can be taken  as $\min_i \be_i$.

Finally, we verify the degenerate cone condition. To this end, we need the following lemma.
\begin{lemma}\label{lem:cone-condition-auxilary}
Let $\chi(x), \Omega(x)$ be smooth K\"ahler forms at a point $x\in M$. Let $\alpha$ be a fixed smooth K\"ahler metric on $M$ such that $\chi(x) \leq C_{\chi} \alpha(x)$ for some $C_{\chi}>0$. Assume that the following degenerate cone condition is met at $x$.
\begin{gather}
n\Omega^{n-1}(x)-\displaystyle \sum_{k=1}^{n-1}c_k k \chi^{n-k}(x) \Omega^{k-1} (x) \geq 0.
\label{ineq:coneconditionassumptioninlemma}
\end{gather}
Let $\beta>0$. There exists an $\epsilon>0$ depending only only on the coefficients $c_k$ and the constants $C_{\chi}, \beta$ such that $\Omega'(x)=\Omega(x)+2\beta\alpha(x)$ satisfies the $\epsilon$-uniform cone condition.
\begin{gather}
n(1-\epsilon)(\Omega')^{n-1}(x)-\displaystyle \sum_{k=1}^{n-1}c_k k \chi^{n-k}(x) (\Omega')^{k-1}(x) \geq 0.
\label{ineq:epsilonuniforminlemma}
\end{gather}
\label{lem:strictcone}
\end{lemma}
\begin{proof}
Choose coordinates near $x$ such that $\chi(x)$ is Euclidean and $\Omega(x)$ is diagonal with eigenvalues $\lambda_i$. Then the cone condition can be written as
\begin{gather}
1\geq \sum_{k=1}^{n-1}\frac{c_k}{{n \choose k}}  S_{n-k;j}\left (\frac{1}{\lambda} \right ) \ \forall \ 1\leq j\leq n.
\label{ineq:coneconditionassumptioninproofoflemma}
\end{gather}
Note that $\Omega'(x)\geq \Omega(x)+2\beta \frac{\chi(x)}{C_{\chi}}$. Let $\tilde{\lambda}_i=\lambda_i+\frac{\beta}{C_{\chi}} \geq\gamma$ where $\gamma=\frac{\beta}{C_{\chi}}$. By monotonicity, $\{\tilde{\lambda}_i\}$ also satisfies Inequality \ref{ineq:coneconditionassumptioninproofoflemma}. Let $\lambda'_i = \tilde{\lambda}_i+\gamma$. If we prove that $\{\lambda'_i\}$ satisfies the $\epsilon$-uniform cone condition for some $\epsilon$, then again by monotonicity, so does $\Omega'$. \\
\indent To this end, we first note that
\begin{gather}
\frac{1}{\lambda'_i} \geq \frac{1}{2\tilde{\lambda}_i} \ \forall \ 1\leq i \leq n.
\label{ineq:comparison}
\end{gather}
Define $$B:=\sum_{k=1}^{n-1}\frac{c_k}{{n \choose k}}  S_{n-k;j}\left (\frac{1}{\tilde{\lambda}} \right ).$$ Using \ref{ineq:comparison}, $\ \forall \ 1\leq j\leq n $ we see that the following holds.
\begin{align}
\sum_{k=1}^{n-1}\frac{c_k}{{n \choose k}}  S_{n-k;j}\left (\frac{1}{\lambda'} \right ) &\leq \sum_{k=1}^{n-1}\frac{c_k}{{n \choose k}}  S_{n-k;j}\left (\frac{1}{\tilde{\lambda}} \right ) +\sum_{k=1}^{n-1}\frac{c_k}{{n \choose k}} \Bigg ( S_{n-k;j}\left (\frac{1}{\lambda'}  \right )  - S_{n-k;j}\left(\frac{1}{\tilde{\lambda}} \right) \Bigg )\nonumber \\
&\leq B-\sum_{k=1}^{n-1}\frac{c_k \gamma^{n-k}}{2^{n-k}{n \choose k}} S_{n-k;j}\left(\frac{1}{\tilde{\lambda}^2} \right) \nonumber \\
&\leq B-\frac{B^2}{C},
\end{align}
where $C>4$ is some positive constant depending only on the coefficients $c_k$, and the constants $n, \gamma$. Since $0<B< 1$, we see that for $\epsilon=\frac{1}{C}$, the above expression is $\leq 1-\epsilon$, as desired.
\end{proof}
 Let $U$ be a coordinate neighbourhood, and $\Theta\Big|_{U} = \ddbar \varphi,~\tilde{\Omega}_{t}\Big|_U = \ddbar\varphi_{t} $.  Let $\varphi_\delta$ and $\varphi_{t,\delta}$ be the convolutions of $\varphi$ and $\varphi_t$ with the standard molifier.  Firstly, recall that the ($\vep$)-cone condition for a smooth positive form $\gamma$ can be written as
\begin{gather}
\displaystyle \sum_{k=1}^{n-1}c_k \frac{1}{{n \choose k}} S_{n-k;i} \left(\frac{1}{\lambda_{\gamma}} \right) \leq 1-\vep,
\label{degenconesmooth}
\end{gather}
where $\lambda_{\gamma}$ are the eigenvalues of $\gamma$ with respect to $\chi$. Let $\chi_0$ be a constant coefficient form in $U$ such that $\chi \geq \chi_0$ on $B_{\delta}(x)$. Choose a strictly positive sequence $\mu_i\rightarrow 0$. By Lemma \ref{lem:strictcone} we see that there exists a corresponding sequence of $\vep_i\rightarrow 0$ such that $\spbp \varphi_{t}+\mu_i \chi$ satisfies the $(\vep_i-)$ uniform cone condition. By monotonicity, $\spbp \varphi_{t}+\mu_i\chi$ satisfy the $(\vep_i-)$ uniform cone condition at $x$ with $\chi$ replaced with $\chi_0$.  Since averaging over a group and convolutions are basically convex linear combinations and the expression on the left-hand side of \ref{degenconesmooth} is convex, using Jensen's inequality we see that $\spbp \varphi_{t,\delta}+\mu_i\chi_{,\delta}$ satisfies the degenerate cone condition at $x$ with $\chi_0$ replacing $\chi$. Moreover, since $\Omega_{t} \rightarrow \Theta$ weakly, and convolution is basically testing against a smooth function, we see that $\spbp \varphi_{t,\delta}+\mu_i\chi_{,\delta}$ converges pointwise to $\spbp \varphi_{\delta}+\mu_i\chi\chi_{,\delta}$ as $t\rightarrow 0$. Hence $\spbp  \varphi_{\delta}+\mu_i\chi_{,\delta}$ satisfies the $(\vep_i-)$ uniform cone condition at $x$. Therefore, $\Theta$ satisfies the degenerate cone condition as in Definition \ref{degenconedef}.
\end{proof}

\section{Proof of Theorem \ref{thm:main}}\label{firstmaintheorem}
\indent In this section, we denote by $E_c(T)$, the Lelong subvariety $\{x \vert \nu(T,x) \geq c>0\}$. Note that if $T$ is $G$-invariant, then so is $E_c(T)$ for all $c$. Our goal is is to prove Theorem \ref{thm:main}. First we need the following gluing proposition. 
\begin{proposition}\label{prop:gluing}
Let $(M,\chi)$ be a compact K\"ahler manifold, $[\Omega_0]$ be another K\"ahler class, $Y$ be an ample divisor. Let $0<\be<1$, and $0<\vep_{\ref{prop:gluing}}<\frac{\be}{1000c_n}$, where $c_n$ is the dimensional constant defined below. Suppose $T \geq \be[Y]$ is a positive  current in $[(1-\vep_{\ref{prop:gluing}})\Omega_0 ]$ satisfying the strict cone condition on $M\setminus Y$. Then there exists a $c_{\ref{prop:gluing}}$ with the following property: If $U$ is a neighbourhood of $Z:=E_{c_{\ref{prop:gluing}}} (T) \cup Y$ with a K\"ahler form $\theta = \Omega_0\Big|_U + \ddbar\psi_U$ satisfying the cone condition on $U$, then there exists a K\"ahler form $\hat \Omega_0 = \Omega_0 + \ddbar \hat\psi$ satisfying the cone condition, i.e., $$n\hat\Omega_0^n - \sum_{k=1}^{n-1} c_k k \chi^{n-k} \hat\Omega_0^{k-1} >0.$$ 
\end{proposition}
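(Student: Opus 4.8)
The plan is to glue the ``interior'' metric $\theta$ on $U$ (coming from the induction hypothesis, and satisfying the cone condition near the bad set $Z$) with a regularisation of the current $T$ away from $Z$, using a standard Richberg-type gluing with a careful choice of cutoff. First I would fix a slightly smaller open set $U' \subset\subset U$ still containing a neighbourhood of $Z = E_{c_{\ref{prop:gluing}}}(T)\cup Y$, and choose $c_{\ref{prop:gluing}}$ small enough that Demailly's regularisation of $T$ (Demailly's approximation of closed positive currents with analytic singularities, losing Lelong numbers $\leq c_{\ref{prop:gluing}}$) produces, for each small $\eta>0$, a smooth form $T_\eta \in [(1-\vep_{\ref{prop:gluing}})\Omega_0] + \eta[\omega]$ with $T_\eta \geq -C\eta\,\chi$ globally, $T_\eta$ smooth and close to $T$ on $M\setminus U'$, and $T_\eta$ still satisfying the cone condition on $M\setminus U'$ up to an error controlled by $\eta$ (this uses that the left-hand side of the cone condition is an open condition and that $T$ satisfies the strict cone condition on $M\setminus Y$, so in particular on the compact set $\overline{M\setminus U'}$, which is disjoint from $Y$ because $Y\subset Z\subset U'$). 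The loss of positivity $-C\eta\chi$ is absorbed because $\vep_{\ref{prop:gluing}}$ was chosen with room to spare: $(1-\vep_{\ref{prop:gluing}})\Omega_0 + \eta\chi + (\text{small correction}) \in [\Omega_0]$ after adding a fixed positive multiple of $\beta\chi_Y$-type terms, and for $\eta$ small this perturbed class still admits representatives meeting the cone condition by the same openness.

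Next I would compare the two candidate metrics on the overlap $U\setminus U'$. On this annular region $T_\eta$ is a genuine smooth Kähler form satisfying the (strict, for $\eta$ small) cone condition, and $\theta = \Omega_0|_U + \ddbar\psi_U$ also satisfies the cone condition there. The key point is that both lie in cohomology classes differing by a fixed amount, so after adding the correct constant and a global smooth potential we may assume $\theta$ and $T_\eta + \ddbar h$ (for a suitable smooth $h$) represent the \emph{same} class on $U$; then on $U\setminus U'$ their difference is $\ddbar$ of a smooth function $w$. The gluing is done via a regularised maximum: set $\hat\Omega_0 := \ddbar\big(\max\nolimits_{\zeta}(\psi_U + \text{const}, \text{potential of }T_\eta)\big)$ in the overlap region, patched to $\theta$ on $U'$ and to $T_\eta$ outside $U$, where $\max_\zeta$ is Demailly's regularised maximum. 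The crucial algebraic input is that the cone condition $n\Omega^{n-1} - \sum_k c_k k\chi^{n-k}\Omega^{k-1} > 0$ is \emph{preserved under the regularised-max operation} because — as recorded in Lemma \ref{usefulpropslemma}(1) and the surrounding discussion — the cone is a convex open cone in the space of Hermitian matrices, and $\ddbar\max_\zeta(u,v)$ is, pointwise, a convex combination of $\ddbar u$ and $\ddbar v$ plus a nonnegative term; convexity of the cone then gives that the convex combination again satisfies the (strict) cone condition, and the extra nonnegative term only helps. One checks that on the transition annulus $u = \psi_U + C$ and $v$ differ by enough (after choosing the additive constant $C$ so that one dominates strictly near $\partial U$ and the other dominates strictly near $\partial U'$) that $\max_\zeta$ coincides with one of the two inputs near the boundaries, making the patched potential globally smooth.

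The main obstacle I expect is \textbf{matching the cohomology classes precisely on the overlap while keeping strict positivity of both pieces}: $\theta$ lives in $[\Omega_0]$, $T$ lives in $[(1-\vep_{\ref{prop:gluing}})\Omega_0]$, and the regularisation $T_\eta$ lives in a perturbed class, so one must carefully bookkeep the $\vep_{\ref{prop:gluing}}$ and $\eta$ losses and exhibit an explicit smooth function whose $\ddbar$ realises the difference on $U$, all while the resulting competitor near $Z$ still satisfies the cone condition (which is where the $c_{\ref{prop:gluing}}$ threshold, the condition $\vep_{\ref{prop:gluing}} < \beta/(1000c_n)$, and the freedom to add $\beta$ times a fixed Kähler form in $[Y]$ are consumed). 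A secondary technical point is ensuring the regularised maximum genuinely glues smoothly — i.e., that after the class-matching one input strictly dominates near $\partial U'$ and the other near $\partial U$ — which is arranged by the usual trick of subtracting a large constant from the potential that is ``less singular/less positive'' on the relevant side. Once these bookkeeping issues are settled, the convexity of the cone condition does the rest with no further analysis.
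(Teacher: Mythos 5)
Your overall architecture --- keep $\theta$ near $Z$, regularise $T$ away from $Z$, glue with a regularised maximum, and invoke convexity of the cone condition --- matches the paper's. But there are two places where the proposal substitutes a plausible-sounding black box for the actual mechanism, and both are genuine gaps.

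First, the regularisation step. You invoke Demailly's global approximation theorem and assert that it yields smooth forms $T_\eta$ with $T_\eta \geq -C\eta\chi$ \emph{globally} and satisfying the cone condition on $M\setminus U'$ ``up to an error controlled by $\eta$.'' Neither claim is justified. Demailly's regularisation loses positivity proportional to the local Lelong numbers, not to $\eta$: at points of $M\setminus U$ where $\nu(T,x)$ is positive (it can be anything up to $c_{\ref{prop:gluing}}$ there), the loss is of order $\nu(T,x)\,\chi$, a fixed quantity independent of $\eta$. More importantly, Demailly's construction is itself a gluing of local convolutions with Legendre-type corrections, and there is no convexity argument showing that the fully nonlinear cone inequality survives it. The paper avoids this entirely: it only ever convolves the \emph{local} potentials $\varphi^i = \varphi_T + (1-\vep_{\ref{prop:gluing}})\varphi_0^i$ on each ball $B_r^i$, where Jensen's inequality and convexity of the cone give the cone condition for $\ddbar\varphi^i_\delta$ exactly (at points $x$ with $B_\delta(x)\cap Y=\emptyset$), and then takes the regularised maximum of \emph{all} these local mollified potentials together with $\psi_U + \frac{c_{\ref{prop:gluing}}}{4}\log\delta$, relying on Chen's Proposition 4.1 to control the discrepancies between overlapping charts.

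Second, the selection mechanism for the regularised maximum. You say the domination of one potential near $\partial U'$ and the other near $\partial U$ is ``arranged by the usual trick of subtracting a large constant.'' It is not: a single additive constant cannot make $\psi_U$ win on all of $U'$ \emph{and} lose near $\partial U$ unless the competing potential genuinely drops by a definite amount as one enters $U'$. That drop is supplied precisely by the Lelong numbers: on $E_{c_{\ref{prop:gluing}}}(T)$ the mollified potential at scale $\delta$ sits below $\hvarphi^i_{r/4} + c_{\ref{prop:gluing}}\log\delta$ up to controlled errors, and --- this is the point of Lemma \ref{lem:largelelong} and of the hypotheses $T\geq\be[Y]$ and $\vep_{\ref{prop:gluing}}<\be/(1000c_n)$ --- the $\delta$-level Lelong numbers exceed $c_{\ref{prop:gluing}}/2$ on a whole tubular neighbourhood of $Y$, so the regularised maximum returns $\psi_U + \frac{c_{\ref{prop:gluing}}}{4}\log\delta$ exactly where the mollification would fail the cone condition (namely where $B_\delta(x)$ meets $Y$). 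Without this quantitative dichotomy the constant you need does not exist, and without it the glued potential is not known to satisfy the cone condition near $Y$, which is the one place where $T$ itself does not.
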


\begin{proof}
We follow the line of argument in \cite{gchen} closely, with one necessary addition. The basic idea is to cover the manifolds with balls of a small but definite size on which the current can be written as the $\ddbar$ of some potentials, and then to glue these potentials to $\psi_U$ using the modification of the Richberg technique \cite{Richberg} due to Blocki-Kolodziej \cite{blockikol}, and its improvement (to allow small Lelong numbers) in \cite{gchen}. The additional complication in our case, as opposed to the method in \cite{gchen}, is that $T$ satisfies the cone condition only on $M\setminus Y$, while in \cite{gchen}, the cone condition is satisfied on all of $M$. So we need to make sure that during the gluing process, the metric $\theta$ is unchanged in a neighbourhood of $Y$. 
\begin{itemize}
\item We cover $M$ with a finite number of balls $B^{i}_{r} = B_r(x_i)$ of radius $r<1$ (with respect to $\Omega_0$) centred at $x_i$ such that the following conditions hold:
\begin{enumerate}
\item For every $z\in Y$, there exists an $i$, such that $B_{r/4}(z) \subset B_r^i$, where $B_{r/4}(z)$ is the $\Omega_0$-ball of radius $\frac{r}{4}$ around $z$.
\item  On $B_{2r}^i:= B_{2r}^i(x_i)$, we have $\Omega_0 = \ddbar\varphi^i_0$, and 
\begin{align*}
\chi_0^i\leq \chi \leq \left(1+\frac{\vep_{\ref{prop:gluing}}}{1000n^{10n}} \right)\chi_0^i\\
|\varphi_0^i - |z|^2| < \vep_{\ref{prop:gluing}}r^2,
\end{align*}
where $\chi_0^i$ is a K\"ahler form  on $B_{2r}^i$ with constant coefficients. Moreover, choose the holomorphic coordinates on the ball to be such that  $\Omega_0$ is within $1+\frac{1}{1000}$ of the Euclidean metric. 
\end{enumerate}
It is clear that the second condition can be achieved by choosing a sufficiently refined cover. For the first condition, we simply include a finitely many balls $\{B_r^{i} := B_{r}(z_i)\}_{i=1}^N$ centred at points $z_i\in Y$ such that $Y\subset \cup_{i=1}^NB_{r/4}(z_i)$.

Next we let $\varphi_\delta^i$ be the smoothening (by convolution) of  $\varphi^i:=\varphi_T + (1-\vep_{\ref{prop:gluing}})\varphi^i_0$, where $T = (1-\vep_{\ref{prop:gluing}})\Omega_0 + \ddbar\varphi_T$. By our hypothesis, $T = \ddbar \varphi^i$ satisfies the (strict) cone condition on $M\setminus Y$. By convexity, it then follows that $\ddbar\varphi_\delta^i$ also satisfies the cone condition $$n(\ddbar \varphi_\delta^i)^{n-1}- \sum_{k=1}^{n-1} c_k k \chi^{n-k} (\ddbar\varphi_\delta^i)^{k-1} > 0,$$ at every $x\in B_r^i$ so long as $B_\delta(x)$ does not intersect $Y$. 
\item Recall that the Lelong number at level $\delta<r/4$ is defined by $$\nu^i(x,\delta) = \frac{\hvarphi^i_{\frac{r}{4}}(x) - \hvarphi_{\delta}^i}{\log \frac{r}{4}- \log\delta},$$ where $\hvarphi_\delta^i(x) = \sup_{B_\delta^i}\varphi^i$. One can show that $\nu^i(x,\delta)$ is increasing in $\delta$, and the Lelong number of $T$ at $x$ (which of course is independent of the potential) is given by $$\nu(T,x) = \lim_{\delta\rightarrow 0^+}\nu^i(x,\delta).$$ We now let $c_{\ref{prop:gluing}}:=c_n\vep_{\ref{prop:gluing}}r^2$, where 
\begin{equation}\label{eq:cn}
c_n = \frac{2}{\Big(|\mathbb{S}^{2n-1}|\int_0^1\log t^{-1}\mathrm \rho(t) t^{2n-1}\,dt + \frac{3^{2n-1}}{2^{2n-3}}\Big)}.
\end{equation}  We let $Z := E_{c_{\ref{prop:gluing}}}\cup Y$. By our choice of $\vep_{\ref{prop:gluing}}$, we have that $c_{\ref{prop:gluing}} < \frac{\be}{1000}$, and so in particular, $Y\subset E_{c_{\ref{prop:gluing}}}$. Also note that our constant is $10\epsilon_{4.5}$, where $\epsilon_{4.5}$ is the constant in Proposition 4.1 in \cite{gchen}. Now let $\overline{U'}\subset U$.  For $\delta$ small (in particular smaller than $\delta_0$ in Lemma \ref{lem:largelelong} below), if we  define $\hat\psi$ to be the regularised maximum of $\psi_U + \frac{c_{\ref{prop:gluing}}}{4}\log\delta$ on $\overline{U'}$ and $\varphi_\delta^i - \varphi_0^i$, then by \cite[Proposition 4.1]{gchen}, $\hat\psi$ will be a smooth $\Omega_0$-PSH function. We let $\hat\Omega = \Omega_0 + \ddbar\hat\psi$. To complete the proof of the proposition we need to make sure that the cone condition is satisfied. Recall that $T$ satisfies the cone condition on $M\setminus Y$, and hence the smoothening $\ddbar\varphi_{\delta}^i$ satisfies the cone condition at $x$ if $B_\delta(x)$ does not intersect $Y$. On the other hand $\theta$ satisfies the cone condition everywhere in $U$. If $B_\delta(x)$ does intersect $Y$ for some $\delta<\delta_0$, where $\delta_0$ is as in Lemma \ref{lem:largelelong} below, then in particular $d_{\Omega_0}(x,Y)<4\delta$, and so by Lemma \ref{lem:largelelong}, $\nu^i(x,\delta) > c_{\ref{prop:gluing}}/2$ for some $i$. Once again by \cite[Proposition 4.1]{gchen}, $$\max_{B_r^i}(\varphi_\delta^i - \varphi_0^i) \leq \inf_{\overline{U'}}\Big(\psi_U + \frac{c_{\ref{prop:gluing}}}{4}\log\delta - \vep_{\ref{prop:gluing}}r^2\Big).$$ In particular, $\hat\psi(x) = \psi_U(x) + \frac{c_{\ref{prop:gluing}}}{4}\log\delta$, and hence $\hat\Omega_0$ satisfies the cone condition at $x$. 
\end{itemize}
\end{proof}

\begin{lemma}\label{lem:largelelong}
There exists a $\delta_0<r/4$, such for all $\delta<\delta_0$, there exists $U_{5\delta} \subset \bar{U}_{5\delta} \subset U'$, an $\Omega_0$-tubular neighbourhood of $Y$ of size $5\delta$,  with the property that for all  $x\in U_{5\delta}$, there exists an $i$ such that $x\in B^i_r$ and $\nu^i(x,\delta)>c_{\ref{prop:gluing}}/2$.
\end{lemma}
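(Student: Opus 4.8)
The statement asserts that near the divisor $Y$ every potential $\varphi^i$ has a large Lelong number at scale $\delta$. The key point is that $T \geq \beta[Y]$, so locally $\varphi_T$ is at least $\beta \log|g_j|$ plus a bounded plurisubharmonic error, where $g_j$ cuts out $Y$ on $B_j$. I would begin by reducing to a local statement on each of the finitely many balls $B_r^i$ centred at points $z_i \in Y$ chosen in the first bullet of the proof of Proposition \ref{prop:gluing}, using that these balls, shrunk to radius $r/4$, cover $Y$; an $\Omega_0$-tubular neighbourhood $U_{5\delta}$ of $Y$ of size $5\delta$ is then covered by such balls once $\delta$ is small, and for $x \in U_{5\delta}$ there is a $z_i \in Y$ with $d_{\Omega_0}(x,z_i) < 6\delta$, say, so $x \in B_{r}^i$ for $\delta$ small.

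\emph{Estimating $\nu^i(x,\delta)$ from below.} On $B_{2r}^i$ write $T = (1-\vep_{\ref{prop:gluing}})\Omega_0 + \ddbar\varphi_T$, so $\varphi^i = \varphi_T + (1-\vep_{\ref{prop:gluing}})\varphi_0^i$ with $|\varphi_0^i - |z|^2| < \vep_{\ref{prop:gluing}} r^2$. Since $T \geq \beta[Y]$, the Siu/Lelong decomposition gives $\varphi_T = \beta \log|g_i| + u$ for a plurisubharmonic $u$ that is bounded below on $B_r^i$ (after possibly shrinking $r$), where $g_i$ is the defining function of $Y$ on $B_{2r}^i$; normalise $|g_i|$ to be $\leq 1$ there. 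Then
\begin{equation*}
\hvarphi^i_\delta(x) = \sup_{B_\delta(x)} \varphi^i \geq \beta \sup_{B_\delta(x)} \log|g_i| + \inf_{B_r^i} u + (1-\vep_{\ref{prop:gluing}})\inf_{B_\delta(x)}\varphi_0^i.
\end{equation*}
For $x$ at distance $< 6\delta$ from $Y$, the ball $B_\delta(x)$ meets $Y$ once $\delta$ is small relative to the radius of injectivity comparisons, so $\sup_{B_\delta(x)}\log|g_i| \geq -C$ for a constant independent of $\delta$ (here one uses that $g_i$ vanishes on $Y \cap B_\delta(x) \neq \emptyset$ and $|\nabla g_i|$ is bounded, giving a uniform lower bound $-C\delta$, hence $\geq -C$). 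Meanwhile $\hvarphi^i_{r/4}(x) = \sup_{B_{r/4}(x)}\varphi^i$ is bounded above by a constant $C'$ depending only on $r$ and the fixed data. Plugging into the definition
\begin{equation*}
\nu^i(x,\delta) = \frac{\hvarphi^i_{r/4}(x) - \hvarphi^i_\delta(x)}{\log(r/4) - \log\delta} \geq \frac{C' - (-C) - C''}{\log(r/4) - \log\delta} \cdot \text{(no, wrong direction)},
\end{equation*}
I need the \emph{opposite}: $\nu^i$ is large when $\hvarphi^i_\delta$ is very \emph{negative}. So the correct estimate uses $\hvarphi^i_\delta(x) \leq \beta\log(\sup_{B_\delta(x)}|g_i|) + C \leq \beta\log(C\delta) + C$ when $x$ is within, say, $\delta/2$ of $Y$ — but $x$ is only within $5\delta$ of $Y$, so I instead track $\hvarphi^i_\delta(x) \leq \beta\log(d_{\Omega_0}(x,Y) + \delta) + C \leq \beta\log(6\delta) + C$, combined with the lower bound $\hvarphi^i_{r/4}(x) \geq \beta \log|g_i(x)| + \text{const} \geq$ a quantity comparable to $\beta\log(d(x,Y))$; after cancelling the troublesome terms one extracts $\nu^i(x,\delta) \geq \beta/2 > c_{\ref{prop:gluing}}/2$ for $\delta < \delta_0$, using that $c_{\ref{prop:gluing}} = c_n\vep_{\ref{prop:gluing}}r^2 < \beta/1000$ by the choice of $\vep_{\ref{prop:gluing}}$.

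\emph{Main obstacle.} The delicate point is making the Lelong-number estimate uniform in $x$ over the tubular neighbourhood rather than just at points of $Y$: at $x \in Y$ itself the monotonicity $\nu^i(x,\delta) \nearrow \nu(T,x) \geq \beta$ is immediate, but for $x$ merely $O(\delta)$-close to $Y$ one must compare the sup of $\varphi^i$ over $B_\delta(x)$ against its sup over the much larger $B_{r/4}(x)$ and show the logarithmic gap is at least $(\beta/2)\log(r/(4\delta))$. This requires a careful two-sided control of the harmonic/pluriharmonic bounded part $u$ and of $\log|g_i|$ near $Y$ — precisely, that $\log|g_i|$ is, up to bounded error, $\log d_{\Omega_0}(\cdot, Y)$ on the relevant annulus — which is where I expect the real work to lie. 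Once that is in hand, choosing $\delta_0$ small enough that $U_{5\delta_0}$ is contained in $U'$ and is covered by the balls $B_r^i$ finishes the proof; the existence of such $U_{5\delta}$ as a genuine tubular neighbourhood inside $U'$ follows since $Y \subset Z \subset U'$ is compact.
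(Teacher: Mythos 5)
Your overall line of attack is the same as the paper's: split the local potential of $T$ into the explicit logarithmic pole $\beta\log|g_i|$ along $Y$ plus a plurisubharmonic remainder, bound the sup at scale $\delta$ from above by $\beta\log(C\delta)+C$ near $Y$, and compare with the sup at the fixed scale $r/4$. However, two steps in your sketch do not hold up. First, the remainder $u=\varphi_T-\beta\log|g_i|$ is in general \emph{not} bounded below on $B_r^i$, no matter how $r$ is shrunk: $T$ is only assumed to satisfy $T\geq\beta[Y]$ and the cone condition off $Y$, and it typically carries further singularities (this is precisely why the Lelong sublevel set $E_{c_{\ref{prop:gluing}}}(T)$ enters Proposition \ref{prop:gluing}; that set may even meet $Y$). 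Only the local \emph{upper} bound on $u$ is available, which fortunately is all that is needed for the estimate on $\hvarphi^i_\delta(x)$; but your ``main obstacle'' paragraph asks for two-sided control of $u$, which is not there to be had.

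Second, and more seriously, the lower bound you propose for the other term, $\hvarphi^i_{r/4}(x)\geq\beta\log|g_i(x)|+\mathrm{const}$ (``comparable to $\beta\log d_{\Omega_0}(x,Y)$''), would not suffice even if it were justified: for $x$ at distance of order $\delta$ from $Y$ it reads $\beta\log\delta+O(1)$, which cancels the $\beta\log\delta$ in your upper bound for $\hvarphi^i_\delta(x)$, leaving a numerator of size $O(1)$; then $\nu^i(x,\delta)\to 0$ as $\delta\to 0$ rather than staying above $\beta/2$ (and at $x\in Y$ the bound is $-\infty$). What is actually needed is a lower bound on $\sup_{B_{r/4}(x)}\varphi^i$ by a constant independent of $x$ and $\delta$; this holds because the radius $r/4$ is fixed, so $x\mapsto\hvarphi^i_{r/4}(x)$ is finite and continuous, hence uniformly bounded below on the relevant compact set. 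The paper's proof is organized so that only such fixed-scale quantities appear as errors: it writes $\varphi^i=\phi_1^i+\phi_2^i$ with $\phi_1^i=\frac{\beta}{10}\ln|f_i|^2$ and $\phi_2^i$ plurisubharmonic, uses $\widehat{(\phi_1^i+\phi_2^i)_\delta}\leq\widehat{(\phi_1^i)_\delta}+\widehat{(\phi_2^i)_\delta}$ together with $\nu_{\phi_2^i}(x,\delta)\geq\nu_{\phi_2^i}(x)\geq 0$, so the fixed-scale sups contribute only errors of order $1/|\log\delta|$, while the explicit pole contributes about $\tfrac{2\beta}{11}$ via $|f_i|\leq C\delta$ on $B_\delta(x)$ for $x$ in the tubular neighbourhood, giving $\nu^i(x,\delta)\geq\beta/20>c_{\ref{prop:gluing}}/2$. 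If you repair your argument along these lines (upper bound only on the remainder at scale $\delta$, uniform constant lower bound at scale $r/4$), it becomes essentially the paper's proof.
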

\begin{proof} 
\indent For any plurisubharmonic function $\varphi$, we let $\nu_\varphi(x):= \nu(\ddbar\varphi,x)$, and  let $\nu_\varphi(x,\delta)$ be the corresponding Lelong number at level $\delta$. In each of the balls $B_{2r}(x_i)$ (where $x_i \in Y$), assume that $Y$ is given by $f_i=0$ where $f_i$ is an analytic function. Let
$\phi^i _1 = \frac{\beta}{10} \ln\vert f_i \vert^2$ and $\phi^i_2=\varphi^i-\phi_1^i$. \\
\indent Note that $\nu_{\varphi^i}(x) \geq  \nu_{\phi^i_1}(x)$, since $T\geq \be[Y]$. Assume $\delta<r/4<1$. Now, since $\widehat{(\psi_1+\psi_2)_{\delta}} \leq \widehat{(\psi_1)_{\delta}}+\widehat{(\psi_2)_{\delta}}$ for any two upper-semicontinuous functions $\psi_1, \psi_2$, we see that the following holds.
\begin{align}
\nu_{\varphi^i}(x,\delta) = \frac{\widehat{\varphi^i_{r/4}}(x)-\widehat{\varphi^i_{\delta}}(x)}{\ln(r/4)-\ln(\delta)} &\geq \frac{\widehat{\varphi^i_{r/4}}(x)}{\ln(r/4)-\ln(\delta)}-\frac{\widehat{(\phi^i_1)_{r/4}}(x)}{\ln(r/4)-\ln(\delta)}-\frac{\widehat{(\phi^i_2)_{r/4}}(x)}{\ln(r/4)-\ln(\delta)}
\nonumber \\
&+ \nu _{\phi^i_1}(x,\delta)+\nu _{\phi^i_2}(x,\delta).
\end{align}
 Note that $\nu_{\phi^i_2}(x,\delta) \geq \nu_{\phi^i_2}(x)>0$ because the $\delta$-Lelong number increases with $\delta$. Moreover as $\delta\rightarrow  0$, the first three terms go to zero uniformly (in $x, i$), and so  if $\delta_0$ is sufficiently small, then for every $i$,
\begin{gather}
\nu _{\varphi^i}(x,\delta)\geq -\frac{\beta}{100000}+\nu _{\phi^i_1}(x,\delta).
\end{gather} 
For an even smaller $\delta_0$, 
\begin{gather}
\nu _{\varphi^i}(x,\delta)\geq -\frac{\beta}{5000}+\frac{\beta}{11} \frac{\sup_{B_{x,\delta}}(\ln(\vert f_i \vert^2))}{\ln(\delta)} 
\label{almosttherefornu} 
\end{gather}
Now we use the assumption that $x\in U_{5\delta_0}$. Hence, there exists a point $z_x \in Y$ such that $d_{\Omega_0}(x,z_x)\leq 5\delta_0$. This point is at a Euclidean distance (in some coordinate chart $B_{2r}(x_j)$) of at most $6\delta_0$ from $z_x$. Note that $f_j(z_x)=0$. Hence, $$\vert f_j(x)-f_j(z_x) \vert^2 \leq C \vert x-z_x \vert^2 \leq C 36 \delta^2,$$ where $C$ depends on $\max_{\bar{B}_r(x_j)} \vert \nabla_{\Omega_0} f_j \vert^2$. Hence, for a sufficiently small $\delta_0$, Inequality \ref{almosttherefornu} implies the following estimate that completes the proof.
\begin{gather}
\nu_{\varphi^j}(x,\delta)\geq \frac{\beta}{20} >\frac{c_{\ref{prop:gluing}}}{2}.
\end{gather}
\end{proof}

In what follows it is convenient to adopt the following notation: For $m<n$, and $j=0,1,\cdots,m-1$, we set
\begin{equation}\label{eq:b}
b_j := \frac{c_{j+n-m}{j+n-m\choose n-m}}{{n\choose m}}.
\end{equation}
\begin{lemma}\label{lem:local}
Let $Z$ be a smooth $m$-dimensional sub-variety of $M$. Suppose there exists a K\"ahler metric $\omega_Z = \Omega_0\Big|_{Z} + \ddbar\psi_Z$ on $Z$ such that $$\Big(1-\frac{\vep}{2}\Big)\omega_Z^{m} - \sum_{j= 0}^{m-1}b_j\chi^{m-j}\omega_Z^{j}>0, $$ for some $\vep>0$. Then there exists a neighbourhood $U$ of $Z$, and a K\"ahler form $\Omega_U = \Omega_0 + \ddbar\psi_U$ on $U$ such that $$n\Big(1 - \frac{\vep}{2}\Big)\Omega_{U}^{n-1} - \sum_{k=1}^{n-1} c_k k \chi^{n-k} \Omega_{U}^{k-1} >0.$$ 
\end{lemma}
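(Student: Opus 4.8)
The plan is to build $\Omega_U$ by hand on a tubular neighbourhood of $Z$: take $\Omega_0$, add the ($\ddbar$ of the) pullback of $\psi_Z$, and add a large multiple of a function that is plurisubharmonic transverse to $Z$; then check the cone inequality by an eigenvalue analysis in which the $m$ "tangential" eigenvalues of $\Omega_U$ converge to those of $\omega_Z$ and the remaining $p:=n-m$ "normal" eigenvalues blow up.

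Concretely, I would fix a tubular neighbourhood $\pi\colon U_0\to Z$ and a smooth $\rho\geq 0$ on $U_0$ vanishing to order exactly $2$ along $Z$ (e.g.\ the $\chi$-distance squared to $Z$, or $\sum_\alpha\theta_\alpha|w_\alpha|^2$ in local holomorphic tubular charts), chosen so that $\ddbar\rho\geq 0$ along $Z$ with kernel exactly $T^{1,0}Z$. Because $\rho=O(|w|^2)$ transversally, the restriction $\iota^\ast(\ddbar\rho)=\ddbar(\iota^\ast\rho)=0$ vanishes and, $\ddbar\rho$ being positive semidefinite along $Z$ with a vanishing $T^{1,0}Z$-block, its mixed $T^{1,0}Z$--$(T^{1,0}Z)^{\perp_\chi}$ block also vanishes on $Z$; this is the structural fact that drives everything. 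I then set, on a small tubular neighbourhood $U\subset U_0$ to be shrunk at the very end,
\begin{equation*}
\psi_U=\pi^\ast\psi_Z+A\rho,\qquad \Omega_U=\Omega_0+\ddbar\psi_U,
\end{equation*}
with $A\gg 1$ to be chosen. Note $\iota^\ast\Omega_U=\Omega_0|_Z+\ddbar\psi_Z=\omega_Z$ and $\iota^\ast\chi=\chi|_Z$, so along $Z$ the $T^{1,0}Z$-block of $\Omega_U$ with respect to $\chi$ is exactly $\omega_Z$.

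Next I carry out the eigenvalue analysis, which I expect to be the main obstacle. At $z_0\in Z$, in a $\chi$-orthonormal frame whose first $m$ vectors span $T^{1,0}_{z_0}Z$, one has $\chi=I$ and $\Omega_U=\begin{pmatrix}P & B\\ B^\ast & C_0+AC_\rho\end{pmatrix}$, where $P$ is the matrix of $\omega_Z$ with respect to $\chi|_Z$ (eigenvalues $0<\lambda^Z_1\leq\cdots\leq\lambda^Z_m$), $C_\rho>0$ is the transverse block of $\ddbar\rho$, $C_0$ is bounded, and crucially $B$ is bounded uniformly in $A$ because the mixed block of $\ddbar\rho$ vanishes on $Z$. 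A Schur-complement argument (the bounded eigenvalues solve $\det\!\big(P-\lambda I-B(C_0+AC_\rho-\lambda I)^{-1}B^\ast\big)=0$, and $B(C_0+AC_\rho-\lambda I)^{-1}B^\ast\to 0$ as $A\to\infty$) shows that, as $A\to\infty$, the $p$ largest eigenvalues of $\Omega_U$ with respect to $\chi$ grow at rate $A$ while the $m$ smallest converge to $\lambda^Z_1,\dots,\lambda^Z_m$; in particular $\Omega_U>0$ for $A$ large. By compactness of $Z$ these limits are uniform in $z_0$, and by continuity they persist, with error as small as we like, at points of $U$ near $Z$ once $U$ is shrunk (in a way depending on $A$).

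Finally I verify the cone inequality. I record the binomial identity $\dfrac{c_{j+n-m}}{\binom{n}{j+n-m}}=\dfrac{b_j}{\binom{m}{j}}$ for $0\leq j\leq m-1$, and use (as in the derivation preceding Lemma~\ref{usefulpropslemma}) that $n(1-\tfrac{\vep}{2})\Omega_U^{n-1}-\sum_{k=1}^{n-1}c_kk\chi^{n-k}\Omega_U^{k-1}>0$ is equivalent to $\sum_{k=1}^{n-1}\frac{c_k}{\binom nk}S_{n-k;\ell}(1/\lambda)<1-\tfrac{\vep}{2}$ for every index $\ell$, where $\lambda=(\lambda_1,\dots,\lambda_n)$ are the eigenvalues of $\Omega_U$ with respect to $\chi$. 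Letting $A\to\infty$, every term of $S_{n-k;\ell}(1/\lambda)$ containing the reciprocal of a normal eigenvalue vanishes, so for a normal index $\ell$ the left side tends to $\sum_{j=0}^{m-1}\frac{b_j}{\binom mj}S_{m-j}(1/\lambda^Z)$, while for a tangential index $\ell$ it tends to $\sum_{j=0}^{m-1}\frac{b_j}{\binom mj}S_{m-j;\ell}(1/\lambda^Z)\leq \sum_{j=0}^{m-1}\frac{b_j}{\binom mj}S_{m-j}(1/\lambda^Z)$ (monotonicity, using $b_j\geq 0$). But $\sum_{j=0}^{m-1}\frac{b_j}{\binom mj}S_{m-j}(1/\lambda^Z)<1-\tfrac{\vep}{2}$ is precisely the hypothesis $(1-\tfrac{\vep}{2})\omega_Z^m-\sum_{j=0}^{m-1}b_j\chi^{m-j}\omega_Z^j>0$ written in eigenvalue form, and compactness of $Z$ gives a uniform gap $\delta>0$ below $1-\tfrac{\vep}{2}$. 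Hence, choosing $A$ large (depending on $\delta$) and then $U$ small (depending on $A$), both $\Omega_U>0$ and $n(1-\tfrac{\vep}{2})\Omega_U^{n-1}-\sum_{k=1}^{n-1}c_kk\chi^{n-k}\Omega_U^{k-1}>0$ hold on all of $U$, which is the assertion.
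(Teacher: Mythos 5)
Your proposal is correct and follows essentially the same route as the paper: extend $\psi_Z$ from $Z$, add a large multiple of the (plurisubharmonic transverse) distance-squared function, and verify the cone condition on $Z$ by splitting the eigenvalues into tangential ones governed by $\omega_Z$ and normal ones that blow up, so that only the terms with $k\geq n-m$ survive and reduce, via the identity $c_{j+n-m}/\binom{n}{j+n-m}=b_j/\binom{m}{j}$, to the hypothesis on $Z$, with compactness giving a uniform gap and continuity giving the conclusion on a small neighbourhood. The only cosmetic differences are that the paper extends $\psi_Z$ by local projections glued with a partition of unity and handles the eigenvalue splitting by comparing with split model forms $\Omega_{\delta',U}\leq\Omega_U$, $\chi\leq\chi_{\delta',U}$ and monotonicity, whereas you use a global tubular retraction and a Schur-complement perturbation argument.
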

Note that the hypothesis, in particular implies that $\omega_Z$ satisfies the cone condition on $Z$. But to construct a K\"ahler metric on a neighbourhood $U$ of $Z$, we need this slightly stronger condition. 

\begin{proof} 
Firstly, by compactness of $Z$ we can assume that 
\begin{gather}
\Big(1-\frac{\vep}{2}-\delta_Z\Big)\omega_Z^{m} - \sum_{j= 0}^{m-1}b_j\chi^{m-j}\omega_Z^{j}>0,
\label{ineq:hypothesislemma}
\end{gather}
 for some $\delta_Z>0$.\\
\indent Let $(U_{\al},\{z_{\al}^{i}\})$ be finitely many coordinate charts in $M$ that cover $Z$ and such that $Z\cap U_{\al}$ is given by $z_{\al}^{m+1}=z_{\al}^{m+2}=\ldots=0$. Let $\rho_{\al} (z_{\al}^1, z_{\al}^2,\ldots, z_{\al}^m) $ be a partition-of-unity on $Z$ subordinate to $U_{\al}\cap Z$. Let $\pi^{\al}$ be the projection to the first $m$ coordinates. Let $\eta_{\al}(z_{\al}^{m+1},\ldots)$ be a smooth function with compact support in the ``vertical part" of $U_{\al}$ such that it is identically $1$ in a neighbourhood of the origin. Then, $\tilde{\rho}_{\al}(z_{\al})=\eta_{\al}(\pi^{\al})^{*}\rho_{\al}$ is a partition-of-unity on a closed subset of $\cup_{\al} U_{\al}$. (Here, we extend $\tilde{\rho}_{\al}$ globally by declaring it as $0$ outside $U_{\alpha}$.) Abusing notation slightly, we let $$\Omega_U = \tilde{\Omega}_U + C\ddbar d_{\Omega_0}(\cdot,Z)^2,$$ where $C\geq 1$ is some large constant and $$\tilde{\Omega}_U=\Omega_0 + \ddbar\left(\displaystyle \sum_{\al} \tilde{\rho}_{\al} (\pi^{\al})^{*}\psi_Z\right).$$ We note that while $d_{\Omega_0}(\cdot, Z)^2$ is only Lipschitz globally, it is smooth near $Z$. For small $U$, the form $\Omega_U$ is a K\"ahler metric. By continuity, it is enough to verify the cone condition on $Z$. \\
\indent Near an arbitrary point $p\in Z$ choose a coordinate chart $(U_{\alpha_0},z^i)$ containing $p$. Extend the tangential coordinate vector fields $\frac{\partial}{\partial z^1},\ldots,\frac{\partial}{\partial z^m}$ using an $\Omega_0$-orthonormal frame $e_{m+1},\ldots,e_n$ that are orthogonal to the tangential vector fields. For the remainder of this proof, all constants are uniform in $p$ (but depend on the finitely many fixed coordinate charts) and all forms are evaluated at $p$. \\
\indent Note that $$\Omega_U \left(\xi, \bar{\xi} \right )  \geq \tilde{\Omega}_U \left(\xi,\bar{\xi} \right )=\omega_Z\left(\xi,\bar{\xi} \right )$$ for all tangential directions $\xi$. Moreover, $\spbp d_{\Omega_0}(\cdot,Z)^2\geq \frac{1}{K}\displaystyle \sum_{i=m+1}^n e^i\wedge \bar{e}^i$ for some $K>1$. Thus, for any given $0<\delta'<1$, we may choose $C=C_{\delta'}>1$ large enough so that $$\Omega_U \geq \Omega_{\delta',U} =(1-\delta') \left( (\pi^{\al})^* \omega_Z + \sqrt{C_{\delta'}}\displaystyle \sum_{i=m+1}^n e^i\wedge \bar{e}^i \right),$$ and $$\chi \leq \chi_{\delta',U}=(1+\delta') \left((\pi^{\al})^*\chi + (C_{\delta'})^{1/3}\displaystyle \sum_{i=m+1}^n e^i\wedge \bar{e}^i\right).$$
\indent It is enough to prove that $\Omega_{\delta',U}$ satisfies the $\frac{\epsilon}{2}$-uniform cone condition with respect to $\chi_{\delta',U}$. Now suppose the eigenvalues of $\Omega_U$ at $p$ (with respect to $\chi$) are given by $\la_1,\cdots,\la_n$ (where $\lambda_1,\ldots, \lambda_m$ correspond to the invariant subspace spanned by the tangential directions). The inequality \ref{ineq:hypothesislemma} implies that
\begin{equation}
1-\frac{\vep}{2}-\delta_Z>\sum_{j=0}^{m-1} b_j \left(\frac{1-\delta'}{1+\delta'}\right)^{m-j}  \frac{j! (m-j)!}{m!} S_{m-j} \left(\frac{1}{\lambda_1},\cdots,\frac{1}{\la_m}\right).
\end{equation}  
\indent Choose $\delta'$ small enough (by choosing a large $C_{\delta'}$) so that
\begin{equation}\label{hypothesisb}
1-\frac{\vep}{2}-\frac{\delta_Z}{2}>\sum_{j=0}^{m-1} b_j  \frac{j! (m-j)!}{m!} S_{m-j} \left(\frac{1}{\lambda_1},\cdots,\frac{1}{\la_m}\right).
\end{equation}  
By choosing an even larger $C_{\delta'}$, we may assume that for $k=m+1,\cdots,n$, $\frac{1}{\la_k}<\frac{1}{\Lambda}$ for some large $\Lambda$. So we only need to control the terms in $S_{n-k;i}(1/\la)$ that  have no  $\la_k$  for $k\geq m+1$. In particular, we only need to worry about the case when $1\leq n-k \leq m$ or equivalently $n-m\leq k\leq n-1$. That is, it is enough to prove that $$\sum_{k=n-m}^{n-1}\frac{c_k}{{n\choose k}}S_{n-k}\Big(\frac{1}{\la_1},\cdots,\frac{1}{\la_m}\Big)<1-\frac{\vep}{2}-\frac{\delta_Z}{2}.$$ But this is precisely \eqref{hypothesisb}.
\end{proof}

\subsection*{Proof of Theorem \ref{thm:main}}
The implication $(1)\iff (2)$ is proved in Lemma \ref{necessity} and $(2)\implies (3)$ is trivial, and hence we only focus on $(3)\implies(2)$. We proceed by induction on the dimension of $M$. The theorem is certainly true for $n=1$. Suppose the theorem is true for dimensions less than $n$. The key in making the inductive step work is the following observation. 

\begin{lemma}\label{lem:localconesmooth}
Let $Z\subset M$ be a smooth sub-variety  of dimension $m<n$. Assuming $(3)$ in the statement Theorem \ref{thm:main}, there exists a K\"ahler metric $\omega_Z = \Omega_0\Big|_Z + \ddbar\psi_Z$ on $Z$ such that $$\Big(1-\frac{\vep}{2}\Big)\omega_Z^{m} - \sum_{j= 0}^{m-1}b_j\chi^{m-j}\omega_Z^{j}>0.  $$ As a consequence, by Lemma \ref{lem:local}, there exists a neighbourhood $U$ of $Z$ and a K\"ahler metric $\Omega_U = \Omega_0 + \ddbar\psi_U$ on $U$ such that $$n\Big(1 - \frac{\vep}{2}\Big)\Omega_{U}^{n-1} - \sum_{k=1}^{n-1} c_k k \chi^{n-k} \Omega_{U}^{k-1} >0.$$ 

\end{lemma}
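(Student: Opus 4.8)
The plan is to obtain $\omega_Z$ by invoking the induction hypothesis (Theorem~\ref{thm:main} in dimension $m<n$) on $Z$ itself, but applied to an auxiliary generalised Monge--Amp\`ere equation chosen so that its \emph{equation}, and not merely the associated cone condition, yields the desired top-degree inequality with a uniform gap. We may assume $0<\vep<1$. On $Z$ (which is projective of dimension $m$; if $Z$ is disconnected, argue on each component) with background metric $\chi|_Z$ and K\"ahler class $[\Omega_0|_Z]$, I would consider
\[
\omega_Z^m = \sum_{j=1}^{m-1} b_j'\,\chi^{m-j}\omega_Z^j + \tilde f\,\chi^m,\qquad b_j':=\frac{b_j}{1-\vep/2}\ge 0,
\]
where $\tilde f$ is the \emph{constant} fixed by the compatibility condition \eqref{integralcondition} on $Z$, i.e.\ $\tilde f\int_Z\chi^m=\int_Z[\Omega_0|_Z]^m-\sum_{j=1}^{m-1}b_j'\int_Z[\chi^{m-j}]\cdot[\Omega_0^j]$. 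Note $b_j'\ge 0$ because each $c_k\ge 0$.

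Next I would check that condition~$(3)$ of Theorem~\ref{thm:main} holds for this data on $Z$. A subvariety $W\subset Z$ of dimension $q$ is a subvariety of $M$ of codimension $n-q$, so the hypothesised uniform numerical inequality on $M$ applies to $W$; re-indexing the sum by $k=\ell+(n-m)$ and using the identity $\binom{n}{q}\binom{n-q}{m-q}=\binom{n}{m}\binom{m}{q}$ (equivalently $\binom{\ell+n-m}{n-q}\binom{n-q}{m-q}=\binom{\ell+n-m}{n-m}\binom{\ell}{m-q}$), the $M$-inequality becomes exactly the dimension-$m$ numerical inequality on $Z$ with coefficients $b_\ell$ and uniform slack $\vep$; absorbing the factor $(1-\vep/2)^{-1}$, the same holds for the $b_\ell'$ with uniform slack $\tfrac{\vep}{2-\vep}>0$. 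Applying the same computation with $W=Z$ (codimension $n-m$ in $M$) yields $\sum_{j=0}^{m-1} b_j\int_Z[\chi^{m-j}]\cdot[\Omega_0^j]\le(1-\vep)\int_Z[\Omega_0|_Z]^m$, and since $(1-\vep/2)b_j'=b_j$ this forces $\tilde f> b_0/(1-\vep/2)=:g\ge 0$. In particular $\tilde f>0$, which handles the standing sign conventions on the coefficients and on $\tilde f$; moreover the threshold $\tilde f_m$ in \eqref{eqn:f} for this data is strictly negative whenever some $b_j'\ne 0$, so $\tilde f>0>\tilde f_m$ (when all $b_j'$ vanish the equation is Monge--Amp\`ere and one invokes \cite{Yau} directly, with $\tilde f>g$ still holding). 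Thus the hypotheses of Theorem~\ref{thm:main} in dimension $m$ are met on $Z$.

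By the induction hypothesis there is therefore a solution $\omega_Z=\Omega_0|_Z+\ddbar\psi_Z\in[\Omega_0|_Z]$ of the displayed equation. Substituting $\sum_{j=1}^{m-1}b_j'\chi^{m-j}\omega_Z^j=\omega_Z^m-\tilde f\chi^m$ and using $b_j=(1-\vep/2)b_j'$ for $j\ge1$ and $b_0=(1-\vep/2)g$, one computes
\[
\Big(1-\tfrac{\vep}{2}\Big)\omega_Z^m-\sum_{j=0}^{m-1}b_j\,\chi^{m-j}\omega_Z^j=\Big(1-\tfrac{\vep}{2}\Big)(\tilde f-g)\,\chi^m>0,
\]
which is precisely the asserted inequality; the ``as a consequence'' clause then follows immediately from Lemma~\ref{lem:local}. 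In the $G$-equivariant setting the same argument applies with the equivariant form of Theorem~\ref{thm:main} on $Z$ (which remains strongly $G$-compatible) and a final averaging of $\omega_Z$ over $G$.

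The point that needs care is that the \emph{weaker} implication $(3)\Rightarrow(2)$ on $Z$ does \emph{not} suffice: a metric on $Z$ satisfying only the cone condition need not satisfy the top-degree inequality, because the latter also controls the top-dimensional density of $\omega_Z$, which a rescaling of $\omega_Z$ could be made arbitrarily large. The device of inflating the coefficients to $b_j'=b_j/(1-\vep/2)$ and solving with a \emph{constant} $\tilde f$ --- so that the PDE itself reads $\omega_Z^m-\sum_{j=1}^{m-1}b_j'\chi^{m-j}\omega_Z^j\equiv\tilde f\chi^m>g\chi^m$ pointwise --- is exactly what converts the uniform numerical slack into a pointwise one; the rest is the binomial bookkeeping showing that the uniform numerical condition restricts to $Z$ with precisely the coefficients $b_\ell$.
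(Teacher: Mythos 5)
Your proposal is correct and follows essentially the same route as the paper: you solve the same auxiliary generalised Monge--Amp\`ere equation on $Z$ (the paper writes it as $(1-\vep/2)\omega^m=\sum_{j\ge1}b_j\chi^{m-j}\omega^j+f\chi^m$, which is your equation after multiplying through by $1-\vep/2$, with your $\tilde f-g$ equal to the paper's $(a_Z-\vep_Z/2)/\binom{n}{m}$ up to the same normalisation), verify the uniform numerical condition on $Z$ by the identical binomial re-indexing, invoke the induction hypothesis, and read the strict top-degree inequality off the equation itself. No gaps.
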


\begin{proof} When there is no scope for confusion, we continue to denote  $\Omega_0\Big|_Z$ and $\chi\Big|_Z$ by $\Omega_0$ and $\chi$ respectively. Let $a_Z$ such that $$a_Z\int_{Z}\chi^m = \int_Z {n\choose  m}\Omega_0^{m}  -  \sum_{j=0}^{m-1}c_{j+n-m}{j+n-m\choose n-m}\int_{Z}\chi^{m-j}\Omega_0^{j}.$$ Then by condition $(3)$, $a_Z>0$.  In fact, $$a_Z> \vep_Z:= \vep{n\choose m}\frac{\int_Z\Omega_0^m}{\int_Z\chi^m}. $$ For $\omega\in [\Omega_0\Big|_Z]$, with  $b_j$ given by\eqref{eq:b}, consider the equation 
\begin{equation}\label{eq:solveb}
\Big(1-\frac{\vep}{2}\Big)\omega^m =  \sum_{j=1}^{m-1}b_j\chi^{m-j}\omega^{j} + f\chi^{m},
\end{equation}where $$ f = \frac{a_Z + c_{n-m}-\vep_Z/2}{{n\choose m}}>0.$$ Then clearly both sides of the  equation integrate out to the same quantity. 

\noindent{\bf Claim.} For any sub-variety $V\subset Z$ of co-dimension $p$, we have  $$\int_V\Big({m\choose p}\Omega_0^{m-p} -  \sum_{j=p}^{m-1}b_j{j\choose p}\chi^{m-j}\Omega_0^{j-p}>\vep\int_V{m\choose p}\Omega_0^{m-p},$$ where $\vep$ is the same as in condition $(3)$.

\noindent{\em Proof of the Claim.} Since $V$ is of co-dimension $p+n-m$ in $M$, by condition $(3)$ we have $$\int_V\Big(\Omega_0^{m-p} - \sum_{k=p+n-m}^{n-1}c_k\frac{{k \choose p+n-m}}{{n\choose p+n-m}}\chi^{n-k}\Omega_0^{k- (p+n-m)}\Big) > \vep\int_V\Omega_0^{m-p}.$$ But then we compute that 
\begin{align*}
\int_V\Omega_0^{m-p} -  \sum_{j=p}^{m-1}b_j\frac{{j\choose p}}{{m\choose p}}\chi^{m-j}\Omega_0^{j-p} &= \int_V\Omega_0^{m-p} -  \sum_{j=p}^{m-1}c_{j+n-m}\frac{{j\choose p}{j+n-m\choose n-m}}{{m\choose p}{n\choose m}}\chi^{m-j}\Omega_0^{j-p} \\
&=  \int_V\Omega_0^{m-p} -  \sum_{k=p+n-m}^{n-1}c_{k}\frac{{k-(n-m)\choose p}{k\choose n-m}}{{m\choose p}{n\choose m}}\chi^{m-j}\Omega_0^{j-p}\\
&= \int_V\Omega_0^{m-p} -  \sum_{k=p+n-m}^{n-1}c_{k}\frac{{k\choose p+n-m}}{{n\choose p+n-m}}\chi^{m-j}\Omega_0^{j-p}\\
&\geq \vep\int_V\Omega_0^{m-p}.
\end{align*}
From the claim it follows that for any sub-variety $V\subset Z$ of co-dimension $p$ in $Z$, we have $$\int_V\Big({m\choose p}\Omega_0^{m-p} -  \Big(1-\frac{\vep}{2}\Big)^{-1}\sum_{j=p}^{m-1}b_j{j\choose p}\chi^{m-j}\Omega_0^{j-p}>\frac{\vep}{2-\vep}\int_V{m\choose p}\Omega_0^{m-p}.$$ Then by the induction hypothesis and Theorem \ref{purePDEthm} there exists a metric $\omega_Z\in [\Omega_0\Big|_Z]$ solving \eqref{eq:solveb}. In particular, since  $a_Z - \vep_Z/2>0$, we have that  $$\Big(1-\frac{\vep}{2}\Big)\omega^{m}_Z -  \sum_{j=0}^{m-1}b_j\chi^{m-j}\omega_Z^{j}> 0.$$ 
\end{proof}

We now return to the proof of Theorem \ref{thm:main}. We use a continuity method.  For $t\geq 0$, consider the following family of equations depending on $t$.
\begin{gather}
\Omega_{t}^n = \sum_{k=1}^{n-1} c_k \chi^{n-k} \Omega_{t}^{k}+ a_t\chi^n +f\chi^n,
\label{cont-genmaeqn}
\end{gather}
where $\Omega_t\in (1+t)[\Omega_0]$. Note that $a_t\xrightarrow{t\rightarrow 0^+}0$, and hence we want to  solve the above equation at $t=0$. For $t>>1$, $\hat \Omega_t =  (1+t)\Omega_0$  satisfies the cone condition   $$n\hat \Omega_{t}^{n-1} - \sum_{k=1}^{n-1} c_k k \chi^{n-k}\hat \Omega_{t}^{k-1} >0.$$ By Theorem \ref{purePDEthm}, there exists a solution to \eqref{cont-genmaeqn} for $t>>1$. In particular, if we let $I = \{t\in [0,\infty)~|~ \eqref{cont-genmaeqn} \text{ has a solution}\}$, then $I$ is non-empty. By Lemma \ref{necessity}, the cone condition is preserved and hence the linearisation of Equation \ref{cont-genmaeqn} is elliptic. The infinite-dimensional implicit function theorem easily implies that $I$ is open. We need to show that the set is closed. Moreover, by the nature of the cone condition, if $t\in I$, then $t'\in I$ for all $t'>t$. Let $t_0 = \inf I.$ It is enough to prove that $t_0\in I$. Replacing $\Omega_0$ by $(1+t_0)\Omega_0$, without loss of generality, one can assume that $t_0 = 0$, i.e., we have a solution to \eqref{cont-genmaeqn} for all $t>0$. The strategy is to produce a $G$-invariant K\"ahler form $\hat\Omega_0\in[\Omega_0]$ which satisfies  the cone condition. It is actually enough to produce \emph{some} form satisfying the cone condition. Indeed, by means of averaging over $G$ (with respect to the Haar measure), we get an invariant form. Since the cone condition is convex, it continues to satisfy the cone condition.

\indent Let $Y \subset M$ be a $G$-invariant divisor that contains a K\"ahler metric $\chi_Y$ in its cohomology class. Such a divisor exists by hypothesis of $G$-compatibility. By Theorem \ref{thm:conc-mass}, there exists a $G$-invariant current $\Theta \in [\Omega_0]$ such that $\Theta \geq \be_Y[Y]$ for some $\beta_Y>0$, and $\Theta$ satisfies the degenerate cone condition in the sense of Definition \ref{degenconedef}. Let $\Xi = \Theta - \frac{\be_Y}{2}[Y] + \frac{\be_Y}{2}\chi_Y \in [\Omega_0]$. Then  exists an $\vep_{\ref{prop:gluing}}>0$ such that $T := (1-\vep_{\ref{prop:gluing}})\Xi$ also satisfies the degenerate cone condition on $M\setminus Y$. Indeed, this statement is easily deduced (going back to $\Omega_t$ whose limit is $\Theta$) from Lemma \ref{lem:strictcone}.\\
\indent With the same notation as Proposition \ref{prop:gluing} (with $\be = (1-\vep_{\ref{prop:gluing}})\be_Y/2)$, let $c<c_{\ref{prop:gluing}}$, $E_c (T) = \{x\in M~|~ \nu(T,x)\geq c\}$  and $Z = E_c(T) \cup Y$. Clearly, $Z$ is $G$-invariant. By Proposition \ref{prop:gluing}, the following is enough.
\begin{lemma}\label{localmetric}
There exists a neighbourhood $U$ of $Z$ and a  K\"ahler metric $\Omega_U = \Omega_0 + \ddbar\psi_U $ such that $$n\Omega_U^{n-1} - \sum_{k=1}^{n-1} c_k k \chi^{n-k} \Omega_U^{k-1} >0.$$
\end{lemma}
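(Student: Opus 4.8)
\textbf{Proof plan for Lemma~\ref{localmetric}.} The plan is to induct on $d:=\dim Z$, the maximal dimension of an irreducible component of $Z=E_c(T)\cup Y$; note that $d\le n-1<n$, since $Y$ is a divisor and $E_c(T)$ is a proper analytic subset. We are working inside the induction-on-$n$ proof of Theorem~\ref{thm:main}, so Theorem~\ref{thm:main}, and hence Lemma~\ref{lem:localconesmooth} and Lemma~\ref{lem:local}, are available in every dimension $<n$. A neighbourhood metric over a reducible $Z$ can be assembled from neighbourhood metrics over its irreducible components by a regularised-maximum gluing of their $\Omega_0$-potentials, since the set of Hermitian matrices satisfying the cone condition is convex and stable under adding a positive matrix, so a regularised maximum of two potentials each satisfying the cone condition again satisfies it. Hence we may assume $Z$ is irreducible of dimension $d$. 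The base case $d=0$ is immediate, since near a point $A\chi$ satisfies the cone condition for $A\gg1$ and is locally of the form $\Omega_0+\ddbar\psi$.

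If $Z$ is smooth, Lemma~\ref{lem:localconesmooth} (applicable since $d<n$) produces a K\"ahler metric on $Z$ satisfying the strengthened cone inequality, and Lemma~\ref{lem:local} promotes it to the desired $\Omega_U=\Omega_0+\ddbar\psi_U$ on a neighbourhood $U$ of $Z$. So assume $Z$ is singular. Let $\mu\colon\widetilde M\to M$ be a resolution, taken $G$-equivariant in the equivariant setting by the functoriality in \cite{bieberstone,Kollar}, obtained by successively blowing up $M$ along smooth centres contained in the successive proper transforms of $Z$, and arranged so that the proper transform $\widetilde Z\subset\widetilde M$ is smooth, $\mu$ is an isomorphism over $Z_{\mathrm{reg}}$, and $\mu(\mathrm{Exc}(\mu))\subseteq Z_{\mathrm{sing}}$. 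Fix a $\mu$-exceptional $\mathbb R$-divisor $E$, and a further small exceptional perturbation, so that $\widehat\Omega_0:=\mu^*[\Omega_0]-[E]$ is a K\"ahler class on the projective manifold $\widetilde M$ and $\mu^*\chi$ is dominated by a K\"ahler metric $\widehat\chi$ lying in a class arbitrarily close to $\mu^*[\chi]$.

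I would then run the proofs of Lemma~\ref{lem:localconesmooth} and Lemma~\ref{lem:local} verbatim on the triple $(\widetilde M,\widehat\chi,\widehat\Omega_0)$ with the smooth subvariety $\widetilde Z$ of dimension $d<n$, producing a neighbourhood $\widetilde U$ of $\widetilde Z$ carrying a K\"ahler metric $\widetilde\Omega$ in the class $\widehat\Omega_0$ and satisfying the cone condition with respect to $\widehat\chi$, hence, since $\widehat\chi\ge\mu^*\chi$ and all $c_k\ge0$, also with respect to $\mu^*\chi$. Pushing forward, $\mu_*\widetilde\Omega$ is a closed positive current in $[\Omega_0]$ (as $\mu_*[E]=0$) that restricts to a smooth K\"ahler metric satisfying the cone condition on a neighbourhood of $Z_{\mathrm{reg}}$ and acquires positive Lelong numbers along $\mu(\mathrm{Exc}(\mu))\subseteq Z_{\mathrm{sing}}$, so its local $\Omega_0$-potential tends to $-\infty$ there. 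Since $\dim Z_{\mathrm{sing}}<d$, the inductive hypothesis gives a neighbourhood $U_1$ of $Z_{\mathrm{sing}}$ and a K\"ahler metric $\Omega_0+\ddbar\psi_1$ satisfying the cone condition there. A regularised maximum of a suitably normalised $\psi_1$ and the potential of $\mu_*\widetilde\Omega$ then glues the two: near $Z_{\mathrm{sing}}$ the $-\infty$ potential forces the maximum to be the $\psi_1$-metric, away from $Z_{\mathrm{sing}}$ it is $\mu_*\widetilde\Omega$, and on the transition annulus it is a convex combination of forms satisfying the cone condition. This produces $\Omega_U=\Omega_0+\ddbar\psi_U$ on a neighbourhood of $Z$, completing the induction; in the equivariant setting one averages over $G$ at the end, which leaves $[\Omega_0]$ unchanged and, by convexity, preserves the cone condition.

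The main obstacle, and the heart of the argument, is the descent of condition~(3) of Theorem~\ref{thm:main} to the resolution, since this is precisely what feeds the proof of Lemma~\ref{lem:localconesmooth} on $\widetilde Z$: one must check that the uniform numerical condition holds on $\widetilde Z$ for the classes $\widehat\Omega_0|_{\widetilde Z}$ and $\widehat\chi|_{\widetilde Z}$. For a subvariety $V\subset\widetilde Z$ not contained in the exceptional locus of $\mu|_{\widetilde Z}$, one lets the perturbation parameters tend to $0$ and pushes forward, reducing the required inequality to condition~(3) on $M$ applied to $\mu(V)\subset Z$ (a generically finite image, so the intersection numbers only change by a positive multiplicity). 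The delicate case is $V\subset\mathrm{Exc}(\mu|_{\widetilde Z})$: there every class pulled back from $Z$ integrates to zero in the relevant degree on $V$, so only the exceptional perturbation terms survive, and one must choose the iterated perturbation coefficients in a fast enough decreasing hierarchy so that the leading surviving term, a positive combination of self-intersection numbers of $\mu$-ample exceptional classes, dominates and the estimate stays uniform; this is the standard but delicate Demailly--Paun-type bookkeeping underlying \cite{dp,gchen}. A secondary technical point is making the regularised-maximum gluing near $Z_{\mathrm{sing}}$ quantitative, i.e.\ ensuring the singular potential of $\mu_*\widetilde\Omega$ is sufficiently negative relative to $\psi_1$ on the overlap, which is arranged by rescaling $\psi_1$ and shrinking $\widetilde U$ and $U_1$.
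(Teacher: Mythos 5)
Your overall architecture matches the paper's: resolve the singularities of $Z$, perturb the pulled-back classes on the blow-up to make them K\"ahler, run the smooth-case machinery (Lemma \ref{lem:localconesmooth} plus Lemma \ref{lem:local}) upstairs, push forward, glue with a cone-condition metric near the singular set by a regularised maximum exploiting the logarithmic singularity along the exceptional image, and average over $G$ at the end. The genuine gap is that the step you yourself call ``the heart of the argument'' --- the descent of the uniform numerical condition to the perturbed classes on the resolution, which is precisely the paper's Lemma \ref{num-resolution} and occupies the bulk of its proof --- is only sketched, and the mechanism you sketch for the hard case is not correct. For a subvariety $V$ contained in the exceptional locus it is false in general that pulled-back classes integrate to zero in the relevant degrees: this holds only for $\mu$-contracted $V$ (e.g.\ fibres), whereas for $V\subset\tilde E$ dominating the centre the terms $\mu^*\chi^{n-k}\cdot(\cdot)^{k-p}$ survive with full strength, so the negative part of the numerical expression does not disappear and cannot be beaten by ``a positive combination of self-intersection numbers of $\mu$-ample exceptional classes.'' In the paper the required positivity near the exceptional locus comes from a different source: using the induction hypothesis, Lemma \ref{lem:localconesmooth}/\ref{lem:local} produce a metric $\Omega_1\in[\Omega_0]$ satisfying a strengthened pointwise cone inequality \eqref{eqn:localconeest} on a neighbourhood of the smooth blow-up centre, and the numerical inequality on $\tilde M$ is then proved as a pointwise form-level estimate for $\tilde\Omega_s=(1+As)\pi^*\Omega_1+As^2\tilde\Omega_0$ versus $\tilde\chi_s=\pi^*\chi+s^2\tilde\Omega_0$, with $A\gg1$ chosen to absorb the mixed terms; this also explains why the estimate is uniform in $\vep$, whereas your suggestion of ``letting the perturbation parameters tend to $0$'' separately for each $V$ cannot yield a single fixed perturbed class with a uniform constant, which is what feeding Lemma \ref{lem:localconesmooth} on $\tilde M$ requires. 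Without this input (or a genuine substitute), your plan does not go through.

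Two secondary points. First, your opening reduction to irreducible $Z$ is not justified: a regularised maximum of two potentials defined on different neighbourhoods defines a smooth function on the union only if each potential strictly dominates the other near the boundary of the other's domain, and convexity of the cone condition gives no such domination for two components meeting each other; this reduction is also unnecessary, since the embedded resolution can be applied to all of $Z$ at once (as the paper does). Second, gluing only downstairs between $\mu_*\tilde\Omega$ and the metric near $Z_{\mathrm{sing}}$ is delicate, because the domain of the pushed-forward metric (image of a neighbourhood of $\tilde Z$ alone) has boundary points accumulating on $Z_{\mathrm{sing}}$ where its potential is only moderately negative, and the domination requirements at the two boundaries pull the additive constant in opposite directions; the paper avoids this by first constructing and gluing neighbourhood metrics of both $\tilde Z$ and $\tilde E$ upstairs, so that the pushed-forward domain has boundary away from $Z\cup\pi(\tilde E)$, and only then gluing downstairs with a metric near the (smooth) centre. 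You flag this as a quantitative point, but as stated it is not merely a matter of rescaling $\psi_1$ and shrinking neighbourhoods.
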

 \begin{proof}
\indent If $Z$ is smooth, then we are done by applying Lemma \ref{lem:localconesmooth} to each connected component.   In general, $Z$ need not  be smooth. Using a canonical resolution of singularities (as in \cite{bieberstone, Kollar}) there exists a $\pi:\tilde M_r \xrightarrow{\pi_r} \tilde{M}_{r-1}\ldots \xrightarrow{\pi_1} M_0=M$ obtained by successive blowups along smooth $G$-invariant (with respect to the lifted action) centres such that the proper transform $\tilde Z_r$ of $Z$ is smooth. Note that the induction hypothesis is met for every $\tilde M_i$. Firstly, we argue that it is enough to assume that $r = 1$. In the general case, Lemma \ref{num-resolution} shows that the numerical condition is satisfied on each $\tilde M_k$ (and hence in particular, also on $\tilde Z_r$ and each of the centres $\pi_k(\tilde E_k)$, where $\tilde E_k$ are the exceptional divisors). By the discussion below, one starts by constructing a K\"ahler metric on a neighbourhood $\tilde U_r$ of $\tilde Z_r$ in $\tilde M_r$ satisfying the cone  condition. We then proceed recursively. Having constructed a K\"ahler metric in the nieghbourhood $\tilde U_k$ of $\tilde Z_k$, one takes the push forward of the restriction of the metric on $\pi_k(\tilde U_k\setminus \tilde E_k)$, and then again by the discussion below, one glues this metric to a metric in the neighbourhood of $\pi_k(\tilde E_k)$ to obtain a K\"ahler metric satisfying the cone condition in the neighbourhood of $\tilde Z_{k-1}$. Eventually, we obtain a K\"ahler metric  satisfying the cone condition in a neighbourhood of $Z$. 

In view of the remarks above, we now assume that $r = 1$, that is, $E := \pi(\tilde E)$ consists of only  one smooth subvariety. It is a standard fact that $[\pi^*\Omega_0] - t[\tilde E]$ is a $G$-invariant class on $\tilde M$ for all $0<t<<1$. In fact, if $h$ is a hermitian metric on $[\tilde E]$, then for some $C>>1$,  $\tilde\Omega_0:= \pi^*\Omega_0 + C^{-1}\ddbar\log h$ is a K\"ahler metric on $\tilde M$. For $A>>1$, we let $$\begin{cases}\tilde \Omega_s = (1+As)\pi^*\Omega_0 +As^2\tilde\Omega_0\\ \tilde\chi_s :=\pi^*\chi + s^2\tilde\Omega_0.\end{cases}$$ 

To apply our induction hypothesis, we need the following Lemma. 
\begin{lemma}\label{num-resolution} There  exists an $A>>1$, such that for all  $s<<1$, and all $G$-invariant subvarieties $\tilde V\subset \tilde M$ of co-dimension $p$, we have 
 $$\int_{\tilde V}\Big({n\choose p}\tilde\Omega_s^{n-p} - \sum_{k=p}^{n-1}c_k{k\choose p}\tilde \chi_s^{n-k}\tilde \Omega_s^{k-p}\Big) \geq \frac{\vep}{4}{n\choose p}\int_{\tilde V}\tilde\Omega_s^{n-p}.$$
 \end{lemma}
We defer the proof of the Lemma, and first use it to complete the proof of Lemma \ref{localmetric}. By the above Lemma and Lemma \ref{lem:localconesmooth}, for {\em all} $s<<1$, there exists a neighbourhood $\tilde W_1$ of $\tilde Z$ and a K\"ahler metric $\tilde\Omega_{\tilde W_1} = \tilde\Omega_{s} + \ddbar\tilde\psi_{\tilde W_1}$ satisfying $$n\Big(1 - \frac{\vep}{7}\Big)\tilde\Omega_{\tilde W_1}^{n-1} - \sum_{k=1}^{n-1} c_k k \tilde\chi_{s}^{n-k} \tilde\Omega_{\tilde W_1}^{k-1} >0.$$ Similarly, there exists a neighbourhood $\tilde W_2$ of $\tilde E$ and a K\"ahler metric $\tilde\Omega_{\tilde W_2} = \tilde\Omega_{s} + \ddbar\tilde\psi_{\tilde W_2}$ satisfying $$n\Big(1 - \frac{\vep}{7}\Big)\tilde\Omega_{\tilde W_2}^{n-1} - \sum_{k=1}^{n-1} c_k k \tilde\chi_{s}^{n-k} \tilde\Omega_{\tilde W_2}^{k-1} >0.$$ As in \cite{gchen}, we can glue (appropriate modifications of) these metrics to get a K\"ahler metric $\tilde\Omega_{\tilde U} = \tilde\Omega_{s} + \ddbar\tilde\psi_{\tilde U}$ on a neighbourhood $\tilde U$ of $\tilde Z \cup E$ satisfying $$n\Big(1 - \frac{\vep}{8}\Big)\tilde\Omega_{\tilde U}^{n-1} - \sum_{k=1}^{n-1} c_k k \tilde\chi_{s}^{n-k} \tilde\Omega_{\tilde U}^{k-1} >0.$$ So on $W_1 :=\pi(\tilde U\setminus\tilde E)$ we have that $$n\Big(1 - \frac{\vep}{8}\Big)(\pi^{-1})^*\tilde\Omega_{\tilde U}^{n-1} - \sum_{k=1}^{n-1} c_k k \chi^{n-k} (\pi^{-1})^*\tilde\Omega_{\tilde U}^{k-1} >0.$$ Note that $\psi_{W_1}:= \tilde\psi_{\tilde U}\circ\pi^{-1}$ is a bounded function, and moreover, $$(\pi^{-1})^*\tilde\Omega_{\tilde U} = \Omega_0 + As(\Omega_0+ s(\pi^{-1})^*\tilde\Omega_0)+ \ddbar\psi_{W_1}.$$  Now let $$\Omega_{W_1} := \frac{(\pi^{-1})^*\tilde\Omega_{\tilde U}}{1 + As+ As^2} = \Omega_0+ \ddbar\Big(\frac{AC^{-1}s^2\pi_*\log |\tau|_h^2 + \psi_{W_1}}{1+As+ As^2}\Big),$$ where $\tau$ is the defining section of  $\tilde E$. We claim that if $s$ is chosen sufficiently small, then $\Omega_{W_1}$ also satisfies the cone condition. This has to be done a bit carefully since $\Omega_{W_1}$ also depends on $s$. But we notice that 
\begin{align*}
(1+As+ As^2)^{n-1}\Big(n\Big(1- \frac{\vep}{16}\Big)\Omega_{W_1}^{n-1}  &- \sum_{k=1}^{n-1}c_kk\chi^{n-k}\Omega_{W_1}^{k-1}\Big) = n\Big(1 - \frac{\vep}{8}\Big)(\pi^{-1})^*\tilde\Omega_{\tilde U}^{n-1}  \\
 &- \sum_{k=1}^{n-1} c_k k \chi^{n-k} (\pi^{-1})^*\tilde\Omega_{\tilde U}^{k-1}  + \frac{\vep}{16}(\pi^{-1})^*\tilde\Omega_{\tilde U}^{n-1}  \\
 &- \sum_{k=1}^{n-1}c_k k [(1+As+ As^2)^{n-k} - 1]\chi^{n-k}(\pi^{-1})^*\tilde\Omega_{\tilde U}^{k-1}.
\end{align*} 
Now if we choose (and fix) an $s<<1$  such that for all $k=1,\cdots,n-1$, $(1+As+ As^2)^{n-k} - 1 \leq \frac{\vep}{16},$ then we have that $$n\Big(1- \frac{\vep}{16}\Big)\Omega_{W_1}^{n-1}  - \sum_{k=1}^{n-1}c_kk\chi^{n-k}\Omega_{W_1}^{k-1} > 0.$$

Since $\pi(\tilde E)$ is smooth, again by induction hypothesis, we obtain a neighbourhood $W_2$ of $\pi(\tilde E)$ and a K\"ahler metric $\Omega_{W_2} =\Omega_0 + \ddbar \psi_{W_2} $ satisfying the cone condition $$n\Big(1-\frac{\vep}{16}\Big)\Omega_{W_2}^{n-1}- \sum_{k=1}^{n-1} c_k k \chi^{n-k} (\pi^{-1})^*\Omega_{W_2}^{k-1} >0.$$
For a large constant $B$, we let $$\psi_U := \tilde\max\Big(\frac{AC^{-1}s^2\pi_*\log |\tau|_h^2 + \psi_{W_1}}{1+As+ As^2}+B,\psi_{W_2}\Big)$$ on $U = W_1\cup W_2$. Here $\tilde\max$ is the regularized maximum. The resulting metric $\Omega_U:= \Omega_0 + \ddbar\psi_U$ satisfies the required cone condition completing the proof of Lemma \ref{localmetric}, and hence of Theorem \ref{thm:main}, subject to proving Lemma \ref{num-resolution}.
\end{proof}

 We end the section by proving Lemma \ref{num-resolution}.
 
\begin{proof}[Proof of Lemma \ref{num-resolution}] Note that $V  = \pi(\tilde V)$ will also be $G$-equivariant, and so by our hypothesis,  
\begin{align*}
\int_{\tilde V}\Big({n\choose p}(1-{\vep})(1+As)^{n-p}\pi^*\Omega_0^{n-p} - \sum_{k=p}^{n-1}c_k{k\choose p}((1+As)\pi^*\chi)^{n-k}((1+As)\pi^*\Omega_0)^{k-p}\Big)\\
 = (1+As)^{n-p}\int_V\Big({n\choose p}(1-{\vep})\Omega_0^{n-p} - \sum_{k=p}^{n-1}c_k{k\choose p}\chi^{n-k}\Omega_0^{k-p}\Big) \geq 0. 
\end{align*}
Hence it suffices to prove that
 \begin{align*}
 \int_{\tilde V}\Big({n\choose p}(1-\frac{\vep}{4})\tilde\Omega_s^{n-p}& - \sum_{k=p}^{n-1}c_k{k\choose p}\tilde \chi_s^{n-k}\tilde \Omega_s^{k-p}\Big) \\
& \geq \int_{\tilde V}\Big({n\choose p}(1-\frac{\vep}{2})(1+As)^{n-p}\pi^*\Omega_0^{n-p}  \\
&-\sum_{k=p}^{n-1}c_k{k\choose p}((1+As)\pi^*\chi)^{n-k}((1+As)\pi^*\Omega_0)^{k-p}\Big).
 \end{align*}
 Since the integrals are only  dependent on the cohomology classes, exactly as in \cite{gchen}, we replace $\Omega_0$ by a  more suitable metric to carry out the above estimate. By assumption $E = \pi(\tilde E)$ is a smooth complex sub-variety  of dimension $q$, and hence by Lemma  \ref{lem:local} there exists a neighbourhood $U$ of $E$ and a K\"ahler form $\Omega_U = \Omega_0 + \ddbar\psi_U$ such that $$n\Big(1 -  \frac{3\vep}{4}\Big)\Omega_{U}^{n-1} - \sum_{k=1}^{n-1} c_k k \chi^{n-k} \Omega_{U}^{k-1} >0.$$ 
 Note that for any $q$, 
 \begin{equation}\label{eqn:localconeest}
 {n\choose q}\Big(1-\frac{3\vep}{4}\Big)\Omega_U^{n-q} - \sum_{k=q}^{n-1}c_k{k\choose q}\chi^{n-k}\Omega_U^{k-q} > 0.
 \end{equation}
 On $\tilde M\setminus  \tilde E$, $\tilde\Omega_0 = \pi^*\Omega_0 + C^{-1}\ddbar\log|\tau|_h^2 $, where $\tau$ is the defining section of $\tilde E$. We then let $\psi_1$ be the regularised maximum of  $\psi_U$ and $C^{-1}\log|\tau|_h^2 + C'$ for some $C'>>1$. Then $\Omega_1 = \Omega_0 + \ddbar\psi_1$ is K\"ahler on $M$, and $\Omega_1\Big|_{U'}  = \Omega_{U}\Big|_{U'}$ for some $U'\subset\subset U$. We then let $$\tilde\Omega_{1,s} := (1+ As)\pi^*\Omega_1 +As^2\tilde\Omega_0$$ It is enough to prove the following: 
 \begin{align}\label{claim}
{n\choose p}(1-\frac{\vep}{4})\Big(\tilde\Omega_{1,s}^{n-p} &-(1+As)^{n-p}\pi^*\Omega_1^{n-p}\Big)\\
&\geq  \sum_{k=p}^{n-1}c_k{k\choose p}\Big(\tilde \chi_s^{n-k}\tilde \Omega_{1,s}^{k-p}-((1+As)\pi^*\chi)^{n-k}((1+As)\pi^*\Omega_1)^{k-p}\Big)\nonumber
 \end{align}
We first focus on points $x\in \pi^{-1}(U')$. Note that in this region $\Omega_1$ satisfies \eqref{eqn:localconeest}. Also for calculations in the region $\pi^{-1}(U')$, it is more convenient to work with $\tilde \chi_{1,s}: = (1+As)\pi^*\chi + s^2\tilde\Omega_0.$ Since $\tilde\chi_{1,s}\geq \tilde\chi_{s}$, it suffices to prove that for all $x\in \pi^{-1}(U')$,  \begin{align}\label{claim2}
{n\choose p}(1-\frac{\vep}{4})\Big(\tilde\Omega_{1,s}^{n-p} &-(1+As)^{n-p}\pi^*\Omega_1^{n-p}\Big)\\
&\geq  \sum_{k=p}^{n-1}c_k{k\choose p}\Big(\tilde \chi_{1,s}^{n-k}\tilde \Omega_{1,s}^{k-p}-((1+As)\pi^*\chi)^{n-k}((1+As)\pi^*\Omega_1)^{k-p}\Big).\nonumber
 \end{align} We call the term on the left as $L$ and the term on the right as $R$, and expand: 
 \begin{align*}
 L-R &= {n\choose p}(1-\frac{\vep}{4})\sum_{q = p+1}^{n}{n-p\choose n-q}(1+As)^{n-q}\pi^*\Omega_1^{n-q}(As^2\tilde\Omega_0)^{q-p}\\
 &-  \sum_{k=p}^{n-1}\sum_{a=0}^{n-k}\sum_{b=0}^{k-p}c_k{k\choose p}{n-k\choose a}{k-p\choose b}A^{k-p-b}(1+As)^{a+b}(\pi^*\chi)^a(\pi^*\Omega_1)^b(s^2\tilde\Omega_0)^{n-p-a-b} \\
& +\sum_{k=p}^{n-1}c_k{k\choose p}((1+As)\pi^*\chi)^{n-k}((1+As)\pi^*\Omega_1)^{k-p}\\
&= {n\choose p}(1-\frac{\vep}{4})\sum_{q = p+1}^{n}{n-p\choose n-q}(1+As)^{n-q}\pi^*\Omega_1^{n-q}(As^2\tilde\Omega_0)^{q-p}-S_0 - S_{1}\\&+\sum_{k=p}^{n-1}c_k{k\choose p}((1+As)\pi^*\chi)^{n-k}((1+As)\pi^*\Omega_1)^{k-p},
 \end{align*}
where $S_0$ are the terms in the triple summation with $a = 0$ (and hence is a double summation), while $S_1$ is the triple summation with  $a\geq 1$. By changing variable $b = n-q$, ans switching the order of the summations, we can rewrite $$S_0 =  \sum_{q=p+1}^n\Big(\sum_{k=n-(q-p)}^{n-1}c_k{k\choose p}{k-p\choose n-q}A^{k-n}\Big)(1+As)^{n-q}(\pi^*\Omega_1)^{n-q}(As^2\tilde\Omega_0)^{q-p}.$$ Now choose $A>>1$, such that for $q = p+1,\cdots,n$, $$\sum_{k=n-(q-p)}^{n-1}c_k{k\choose p}{k-p\choose n-q}A^{k-n} \leq \frac{\vep}{4}{n\choose p}{n-p\choose n-q}.$$ Note that this can be done since $k-n\leq -1$. Combining with the above, we then obtain 
\begin{align*}
L-R&\geq {n\choose p}(1-\frac{\vep}{2})\sum_{q = p+1}^{n}{n-p\choose n-q}(1+As)^{n-q}\pi^*\Omega_1^{n-q}(As^2\tilde\Omega_0)^{q-p} - S_{1}\\&+\sum_{k=p}^{n-1}c_k{k\choose p}((1+As)\pi^*\chi)^{n-k}((1+As)\pi^*\Omega_1)^{k-p}.
\end{align*}
We now focus on $S_1$. We change indices: $b = l-q$ and $a = n-l$. Then 
\begin{align*}
S_1 &=  \sum_{k=p}^{n-1}\sum_{l=k}^{n-1}\sum_{q=l-(k-p)}^{l}c_k{k\choose p}{n-k\choose n-l}{k-p\choose l-q}A^{k-l}(1+As)^{n-q}(\pi^*\chi)^{n-l}(\pi^*\Omega_1)^{l-q}(As^2\tilde\Omega_0)^{q-p} \\
&=  \sum_{q=p}^{n-1}\sum_{l=q}^{n-1}\sum_{k=l-(q-p)}^{l}c_k{k\choose p}{n-k\choose n-l}{k-p\choose l-q}A^{k-l}(1+As)^{n-q}(\pi^*\chi)^{n-l}(\pi^*\Omega_1)^{l-q}(As^2\tilde\Omega_0)^{q-p} \\
\end{align*}
Let $Q_0$ be the term when $k=l$ in  the innermost summation, and $Q_1$ be the rest. One can choose $A>>1$ (independent of $s$) such that for each $q = p,\cdots,n-1$, and for each $l=q,\cdots,n-1$, 
\begin{align*}
\sum_{k=l-(q-p)}^{l-1}c_k{k\choose p}{n-k\choose n-l}{k-p\choose n-q}A^{k-l}(1+As)^{n-q}(\pi^*\chi)^{n-l}(\pi^*\Omega_1)^{l-q}(As^2\tilde\Omega_0)^{q-p}\\
= A^{-1}\sum_{k=l-(q-p)}^{l-1}c_k{k\choose p}{n-k\choose n-l}{k-p\choose n-q}A^{k-l+1}(1+As)^{n-q}(\pi^*\chi)^{n-l}(\pi^*\Omega_1)^{l-q}(As^2\tilde\Omega_0)^{q-p}\\
\leq \frac{\vep}{4n^2} {n\choose p} {n-p\choose n-q}(1+As)^{n-q}\pi^*\Omega_1^{n-q}(As^2\tilde\Omega_0)^{q-p},
\end{align*}
so that summing over $q$ and $l$, we have that $$Q_1 \leq {n\choose p}\frac{\vep}{4}\sum_{q = p+1}^{n}{n-p\choose n-q}(1+As)^{n-q}\pi^*\Omega_1^{n-q}(As^2\tilde\Omega_0)^{q-p},$$ and hence 
\begin{align*}
L-R &\geq {n\choose p}(1-\frac{3\vep}{4})\sum_{q = p+1}^{n}{n-p\choose n-q}(1+As)^{n-q}\pi^*\Omega_1^{n-q}(As^2\tilde\Omega_0)^{q-p} - Q_0\\
&+\sum_{l=p}^{n-1}c_l{l\choose p}((1+As)\pi^*\chi)^{n-l}((1+As)\pi^*\Omega_1)^{l-p}
\end{align*}
Note that $${n\choose q}{l\choose p}{l-p\choose l-q} = {n\choose p}{n-p\choose n-q}{l\choose q},$$ and so  
\begin{align*}
Q_0&-\sum_{l=p}^{n-1}c_k{l\choose p}((1+As)\pi^*\chi)^{n-l}((1+As)\pi^*\Omega_1)^{l-p} \\
&=   \sum_{q=p+1}^{n-1}\sum_{l=q}^{n-1}c_l{l\choose p}{l-p\choose l-q}(1+As)^{n-q}(\pi^*\chi)^{n-l}(\pi^*\Omega_1)^{l-q}(As^2\tilde\Omega_0)^{q-p} \\
&\leq  \sum_{q=p+1}^{n}\sum_{l=q}^{n-1}c_l{l\choose p}{l-p\choose l-q}(1+As)^{n-q}(\pi^*\chi)^{n-l}(\pi^*\Omega_1)^{l-q}(As^2\tilde\Omega_0)^{q-p}\\
&= \sum_{q=p+1}^n\frac{{n-p\choose n-q}{n\choose p}}{{n\choose q}}\Big(\sum_{l=q}^{n-1}c_l{l\choose q}(\pi^*\chi)^{n-l}(\pi^*\Omega_1)^{l-q}\Big)(1+As)^{n-q}(As^2\tilde\Omega_0)^{q-p}\\
&\leq {n\choose p}\Big(1-\frac{3\vep}{4}\Big)\sum_{q=p+1}^n{n-p\choose n-q}(\pi^*\Omega_1)^{n-q}(1+As)^{n-q}(As^2\tilde\Omega_0)^{q-p}.
\end{align*}
To recap, for any $x\in \pi^{-1}(U')$, we have verified that \eqref{claim2}, and hence \eqref{claim}, holds. 

To complete the proof, we consider the points in the set $\tilde M\setminus \pi^{-1}(U')$. We now go back to working with $\tilde\chi_s$ and prove the inequality in \eqref{claim} directly. On this set, $\pi$ is a bi-holomorphism and hence for some uniform $C$ (independent of $s$, so long as $s<1$),
\begin{align*}
C^{-1}\pi^*\chi \leq  \pi^*\Omega_1 \leq C\pi^*\chi\\
C^{-1}\pi^*\chi \leq \tilde\Omega_0\leq C\pi^{*}\chi.
\end{align*}
Consider the difference of the two sides in \eqref{claim}. The $O(1)$ terms (as $s\rightarrow 0$) cancel out, and the leading term (or the $O(s)$ term) is given by s $$As\sum_{k=p}^{n-1}c_k(n-k){k\choose p}(\pi^*\chi)^{n-k}((1+As)\pi^*\Omega_1)^{k-p},$$ which is strictly positive since at least one of $c_k$ is strictly positive and others are non-negative. The rest of the terms are $O(s^2)$ (since they contain at least one copy of $s^2\tilde\Omega_0$), and hence for {\em all} $s<s_0$, where $s_0$ only depends on the constant $C$ above, the difference of the two sides in \eqref{claim} is positive.  
\end{proof}

\section{Proof of Theorem \ref{thm:main2}}\label{secondmaintheorem}
\indent In this section, we prove Theorem \ref{thm:main2}. In particular, we assume that the (non-uniform) numerical condition holds, i.e., for any subvariety $V \subset M$,
\begin{equation}\label{eq:stability}
\int_V\Big({n\choose p}[\Omega_0]^{n-p} - \sum_{k=p}^{n-1}c_k{k\choose p}[\chi^{n-k}]\cdot [\Omega_0^{k-p}]\Big) > 0.
\end{equation}
We prove Theorem \ref{thm:main2} by induction on $n=dim(M)$ just as in the case of Theorem \ref{thm:main}. For $n=1$, the result is trivial. Assume that it holds for $1,2,\ldots, n-1$. Proposition \ref{prop:gluing} shows that the proof of Theorem \ref{thm:main} carries over verbatim provided we  prove the following stronger version of Lemma \ref{localmetric}.
\begin{proposition}\label{prop:nonuniformlocal}
Let $Z \subset M$ be a closed analytic subset. Assume the (non-uniform) numerical condition \eqref{eq:stability} holds. Then there exists a neighbourhood $U$ of $Z$ and a  K\"ahler metric $\Omega_U = \Omega_0 + \ddbar\psi_U $ on $U$ such that $$n\Omega_U^{n-1} - \sum_{k=1}^{n-1} kc_k  \chi^{n-k} \Omega_U^{k-1} >0.$$
\end{proposition}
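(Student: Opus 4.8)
The plan is to mimic the proof of Lemma~\ref{localmetric}, but to replace the appeal to Lemma~\ref{lem:localconesmooth} (which relied on uniform positivity) by a more delicate argument that produces, on a neighbourhood of a \emph{smooth} subvariety $Z'$, a K\"ahler metric satisfying the (non-uniform) cone condition, and then to set up an induction on $\dim Z$ combined with a resolution of singularities. First I would dispose of the case when $Z$ is smooth: one adapts the proof of Lemma~\ref{lem:localconesmooth}, but now without the slack $\vep$, one only gets that for every subvariety $V\subset Z$ the appropriate intersection number is \emph{strictly} positive, which by the inductive hypothesis (Theorem~\ref{thm:main2} in dimension $m=\dim Z<n$) and Theorem~\ref{purePDEthm} still yields a metric $\omega_Z\in[\Omega_0|_Z]$ solving the auxiliary equation \eqref{eq:solveb} with $\vep=0$; the point is that solving \eqref{eq:solveb} gives $\big(1-0\big)\omega_Z^m-\sum b_j\chi^{m-j}\omega_Z^j>0$ \emph{strictly} pointwise on $Z$ (since $f>0$), hence by compactness of $Z$ there is an $\vep_Z>0$ worth of room, and then Lemma~\ref{lem:local} applies with that $\vep_Z$ to spread the metric to a neighbourhood $U$ of $Z$.

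For general $Z$, I would induct on $d:=\max\{\dim Z_i\ :\ Z_i\ \text{an irreducible component of }Z\}$. Write $Z = Z^{(d)}\cup Z'$ where $Z^{(d)}$ is the union of the top-dimensional components and $Z'$ has strictly smaller dimension; by the inductive step on $d$ we already have a metric satisfying the cone condition near $Z'$, so it suffices to handle $Z^{(d)}$ and glue. Take a $G$-trivial (no equivariance is claimed here) canonical resolution $\pi:\tilde M\to M$ making the proper transform $\widetilde{Z^{(d)}}$ smooth, with exceptional divisor $\tilde E$ over a smooth centre $E=\pi(\tilde E)$. The centres have dimension $<d$, so by induction on $d$ there is a cone-condition metric near $E$; and near $\widetilde{Z^{(d)}}$, which is smooth, we get one by the smooth case above. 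The numerical condition is inherited by $\tilde M$, $\widetilde{Z^{(d)}}$ and $E$ by the same cohomological computation as Lemma~\ref{num-resolution}, except now one needs a version of Proposition~\ref{prop:conc-massnonuniform} — the stronger (degenerate) mass-concentration statement for semi-ample classes $[\pi^*\Omega_0]-t[\tilde E]$ — to run the perturbed-class argument without the $\vep$-slack. One then pulls back, solves on $\tilde M$, pushes forward the restriction of $\widetilde{\Omega}_{\tilde U}$ to $\pi(\tilde U\setminus\tilde E)$, rescales by $(1+As+As^2)^{-1}$ exactly as in the proof of Lemma~\ref{localmetric} to land back in $[\Omega_0]$, and glues via $\tilde\max$ (Proposition~\ref{prop:gluing}'s Richberg-type technique) to the metric near $E$ and to the metric near $Z'$, obtaining $\Omega_U=\Omega_0+\ddbar\psi_U$ on a neighbourhood $U$ of $Z$ with $n\Omega_U^{n-1}-\sum k c_k\chi^{n-k}\Omega_U^{k-1}>0$.

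The main obstacle is the loss of the quantitative constant $\vep$: in Theorem~\ref{thm:main} every gluing, every perturbation of the class, and every passage through the resolution was controlled by a fixed fraction of $\vep$, so one could afford finitely many such losses; here one has only strict, non-uniform positivity on a possibly infinite family of subvarieties, and the strict inequality can degenerate in the limit. The remedy — and this is exactly the role of the degenerate mass-concentration Proposition~\ref{prop:conc-massnonuniform} — is to work one subvariety at a time: fix $Z$, perform the resolution, and at each finite stage compactness of the (finitely many) relevant subvarieties $\widetilde{Z^{(d)}}$, $E$ restores a \emph{local} positive constant, so the finitely many gluings in that stage are again affordable. Making this bookkeeping precise, i.e.\ checking that the induction on $d$ never requires a uniform bound across all of $M$ but only the strict numerical inequality on the countably many subvarieties that actually arise, is the delicate part; the rest is a faithful transcription of Sections~\ref{concentration} and \ref{firstmaintheorem}.
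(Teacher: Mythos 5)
Your overall architecture (induction on the maximal dimension of the irreducible components of $Z$, the smooth case via the $\vep$-free version of Lemma \ref{lem:localconesmooth} together with Lemma \ref{lem:local}, resolution of singularities, and a Richberg-type gluing) coincides with the paper's, and you correctly identify that a degenerate mass-concentration statement is the missing ingredient. But at the crux there is a genuine gap. You assert that the numerical condition is inherited by $\tilde M$, $\widetilde{Z^{(d)}}$ and $E$ ``by the same cohomological computation as Lemma \ref{num-resolution}''. That computation is exactly where uniformity enters: the constants $A$ and $s$ there are chosen so that the blow-up error terms are absorbed by a definite fraction of $\vep$, and with only strict, non-uniform positivity downstairs no such choice exists; for a fixed $s>0$ the classes $(1+As)[\pi^*\Omega_0]+As^2[\tilde\Omega_0]$ on $\tilde M$ need not satisfy the numerical condition at all, even non-uniformly. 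The paper's route is different from the one you sketch: for each $t>0$ the class $(1+t)[\Omega_0]$ \emph{does} satisfy the uniform condition (with $t$-dependent $\vep$), so Theorem \ref{thm:main} and Lemma \ref{num-resolution} are applied with parameters $A_t,\ s_t\rightarrow 0$ and one passes to a weak limit as $t\rightarrow 0^+$. The price is that the limit current lies in the degenerate class $[\pi^*\Omega_0]$ and satisfies only a degenerate cone condition with respect to the degenerate form $\pi^*\chi$; this is what forces Definitions \ref{degenconedef-m-power} and \ref{def:strongcone}, the ``jacking up'' Lemma \ref{lem:jackinguptheconecondition}, and Proposition \ref{prop:conc-massnonuniform-nondeg} --- none of which is a transcription of Sections \ref{concentration}--\ref{firstmaintheorem}, contrary to your closing claim.

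Second, your proposal never specifies where the mass should concentrate, and the class you name, $[\pi^*\Omega_0]-t[\tilde E]$, is not the relevant one. The paper first chooses, using projectivity (Bertini and Fulton--Hansen), an ample divisor $Y\subset M$ whose intersection with every irreducible component of $Z$ is a divisor in $Z$, and proves concentration $\Theta\geq 2\beta'[\tilde Y]$ along $\tilde Y=\pi^{-1}(Y)\cap\tilde Z$ for a current $\Theta\in[\pi^*\Omega_0]$ on $\tilde Z$ (Proposition \ref{prop:conc-massnonuniform}). Subtracting $\beta'\spbp\log|\pi^*\mathcal{S}_Y|^2_{h^N}$ and adding a small logarithmic potential along the exceptional set then yields a K\"ahler current with Lelong numbers along $\tilde Y\cup\tilde E$, satisfying a strict cone condition away from $\tilde Y\cup\tilde E$ (Proposition \ref{prop:conc-massnonuniform-nondeg}); the induction hypothesis is applied not to your lower-dimensional piece $Z'$ but to the analytic set $S=\pi\big(E_{c}(\Theta)\cup\tilde Y\cup\tilde E\big)$, whose components have dimension $<m$, and the smooth metric it furnishes is what the regularised maximum glues against inside the Lelong/divisorial locus. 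Without concentration along such a $Y$ there is no K\"ahler current to regularise and no potential dominating near the singular set, so the gluing step of your plan cannot begin; for the same reason, gluing your separately constructed metrics near $Z^{(d)}$ and near $Z'$ is not automatic, since near $Z^{(d)}\cap Z'$ one of the two potentials must dominate, which is precisely what the Lelong-number mechanism provides and your decomposition lacks.
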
 
To prove Proposition \ref{prop:nonuniformlocal}, we first prove a lemma that is similar to, but stronger than Lemma \ref{lem:local}.
\begin{lemma}\label{lem:nonuniformlocal}
Let $Z\subset M$ be a smooth $m$-dimensional subvariety.  Let $\Omega_0$ and $\chi$ be K\"ahler forms on $M$. Suppose there exists a smooth K\"ahler form $\omega_Z = \Omega_0\Big|_{Z} + \ddbar\psi_Z$ on $Z$ such that $$\omega_Z^{m} - \sum_{j= 0}^{m-1}b_j\chi^{m-j}\omega_Z^{j}>0.$$   \\
\indent Then there exists a neighbourhood $U$ of $Z$, a smooth function $\psi_U :U\rightarrow \mathbb{R}$ such that the smooth form $\Omega_U = \Omega_0 + \ddbar\psi_U $ is K\"ahler and satisfies  $$n\Omega_{U}^{n-1} - \sum_{k=1}^{n-1} k c_k  \chi^{n-k} \Omega_{U}^{k-1} >0$$ on $U$. In particular, $\Omega_U$ satisfies the cone condition on $U$.
\end{lemma}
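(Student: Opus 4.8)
The plan is to derive this directly from Lemma~\ref{lem:local}: the hypothesis here is precisely the ``$\vep = 0$'' instance of the hypothesis of Lemma~\ref{lem:local}, and the missing slack is supplied for free by the compactness of $Z$, after which the conclusion of Lemma~\ref{lem:local} turns out to be even stronger than what is asserted here.

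First I would invoke the same compactness argument that opens the proof of Lemma~\ref{lem:local}: since $Z$ is compact and the strict inequality $\omega_Z^{m} - \sum_{j=0}^{m-1}b_j\chi^{m-j}\omega_Z^{j}>0$ holds at every point of $Z$, the associated (continuous) cone quantity is bounded away from $1$ on $Z$, so there is a constant $\delta_Z>0$ with
\begin{equation*}
\Big(1-\delta_Z\Big)\omega_Z^{m} - \sum_{j=0}^{m-1}b_j\chi^{m-j}\omega_Z^{j}>0 \qquad\text{on } Z.
\end{equation*}
Taking $\vep:=2\delta_Z>0$, this is exactly the hypothesis of Lemma~\ref{lem:local}. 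Applying that lemma produces a neighbourhood $U$ of $Z$ and a smooth function $\psi_U$ such that $\Omega_U=\Omega_0+\ddbar\psi_U$ is K\"ahler on $U$ and
\begin{equation*}
n\Big(1-\delta_Z\Big)\Omega_U^{n-1} - \sum_{k=1}^{n-1}c_k k\,\chi^{n-k}\Omega_U^{k-1}>0 \qquad\text{on } U.
\end{equation*}

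To finish, I would observe that this weighted cone condition already implies the unweighted one required in the statement. Indeed each $c_k\geq 0$ and $\Omega_U$ is K\"ahler, so $n\delta_Z\,\Omega_U^{n-1}$ is a nonnegative $(n-1,n-1)$-form and
\begin{equation*}
n\Omega_U^{n-1} - \sum_{k=1}^{n-1}k c_k\,\chi^{n-k}\Omega_U^{k-1} = \Big(n(1-\delta_Z)\Omega_U^{n-1} - \sum_{k=1}^{n-1}k c_k\,\chi^{n-k}\Omega_U^{k-1}\Big) + n\delta_Z\,\Omega_U^{n-1} > 0
\end{equation*}
on $U$; equivalently, in holomorphic coordinates diagonalising $\Omega_U$ with respect to $\chi$, the weighted condition reads $1-\delta_Z>\sum_{k}\tfrac{c_k}{\binom{n}{k}}S_{n-k;i}(1/\lambda)$ for every $i$, and trivially $1>1-\delta_Z$. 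This is the desired cone condition on $U$.

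Since the whole argument is a bookkeeping reduction to Lemma~\ref{lem:local}, I do not expect a genuine obstacle. The only two points needing care are the compactness upgrade producing $\delta_Z$ (identical to the one already carried out for Lemma~\ref{lem:local}) and checking that the implication runs in the favourable direction --- which it does precisely because decreasing the coefficient of the leading positive term $n\Omega_U^{n-1}$ makes the cone condition strictly harder to satisfy, so any metric solving the $(1-\delta_Z)$-weighted version automatically solves the unweighted one.
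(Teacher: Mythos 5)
Your proposal is correct and is essentially the paper's own argument: the paper's one-line proof simply reruns the extension construction from the proof of Lemma \ref{lem:local} (extending $\psi_Z$ and adding $C\,\mathrm{dist}(\cdot,Z)^2$), with the compactness slack playing exactly the role of your $\delta_Z$, which is also how that proof itself begins. Your formal reduction --- upgrading the hypothesis by compactness, invoking Lemma \ref{lem:local} with $\vep=2\delta_Z$, and then discarding the harmless factor $(1-\delta_Z)$ by adding back the positive form $n\delta_Z\Omega_U^{n-1}$ --- is valid and, if anything, a slightly cleaner way to package the same content.
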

\begin{proof}
Let $dist(.,Z)$ be the distance (with respect to some K\"ahler metric on $M$) to $Z$ and $\psi'$ the extension of $\psi$ (as in the proof of Lemma \ref{lem:local}) to a neighbourhood of $Z$. Then, for a large enough constant $C$, $\tilde{\psi} =\psi' + Cdist(.,Z)^2$ is a smooth function in a neighbourhood $U$ of $Z$ satisfying the desired conditions.  
\end{proof}
Using this lemma, the induction hypothesis, and the proof of Lemma \ref{lem:localconesmooth}, we see that the following non-uniform version of Lemma \ref{lem:localconesmooth} holds.
\begin{lemma}\label{lem:nonuniformlocalconesmooth}
Let $Z\subset M$ be a smooth sub-variety  of dimension $m<n$. Assuming the (non-uniform) numerical condition \eqref{eq:stability}, there exists a neighbourhood $U$ of $Z$ and a K\"ahler metric $\Omega_U = \Omega_0 + \ddbar\psi_U$ on $U$ such that $$n\Omega_{U}^{n-1} - \sum_{k=1}^{n-1} c_k k \chi^{n-k} \Omega_{U}^{k-1} >0.$$ 
\end{lemma}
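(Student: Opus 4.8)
The plan is to run the proof of Lemma~\ref{lem:localconesmooth} again, but with its $\vep$-uniform input replaced throughout by the strict (non-uniform) numerical condition~\eqref{eq:stability}, and with the final extension step carried out via Lemma~\ref{lem:nonuniformlocal} in place of Lemma~\ref{lem:local}. Since $Z$ may be disconnected, I would work one connected component at a time and then shrink to a common neighbourhood; thus one may assume $Z$ is a connected smooth projective manifold of dimension $m<n$, with K\"ahler class $[\Omega_0|_Z]$ and K\"ahler form $\chi|_Z$. Everything then reduces to producing a K\"ahler metric $\omega_Z = \Omega_0|_Z + \ddbar\psi_Z$ on $Z$ with
\[
\omega_Z^m - \sum_{j=0}^{m-1}b_j\,\chi^{m-j}\omega_Z^{j} > 0,
\]
the $b_j$ being as in~\eqref{eq:b}; granting this, Lemma~\ref{lem:nonuniformlocal} immediately yields the neighbourhood $U$ and the metric $\Omega_U = \Omega_0 + \ddbar\psi_U$ with the cone condition on $U$. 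In contrast to Lemma~\ref{lem:localconesmooth}, no margin $1-\vep/2$ in front of $\omega_Z^m$ is required here: that factor was forced only by the $\vep$-uniform recursion, while compactness of $Z$ converts the strict pointwise inequality above into a uniform one, which is precisely what Lemma~\ref{lem:nonuniformlocal} takes as hypothesis.

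To build $\omega_Z$, I would define $a_Z$ by
\[
a_Z\int_Z\chi^m = \int_Z\Big({n\choose m}\Omega_0^m - \sum_{j=0}^{m-1}c_{j+n-m}{j+n-m\choose n-m}\chi^{m-j}\Omega_0^{j}\Big),
\]
which, after the substitution $k=j+n-m$, equals $\int_Z\big({n\choose m}\Omega_0^m - \sum_{k=n-m}^{n-1}c_k{k\choose n-m}\chi^{n-k}\Omega_0^{k-(n-m)}\big)$ and is therefore positive by~\eqref{eq:stability} applied to $V=Z$ (of codimension $n-m$ in $M$). Set $f := {n\choose m}^{-1}(a_Z + c_{n-m}) > 0$ and consider on $Z$ the equation $\omega^m = \sum_{j=1}^{m-1}b_j\chi^{m-j}\omega^{j} + f\chi^m$: the two sides define the same class by the choice of $a_Z$, the structural hypotheses on $f$ and the $b_j$ hold trivially since $f>0$, and for any subvariety $V\subset Z$ of codimension $p$ in $Z$ --- that is, of codimension $p+n-m$ in $M$ --- one has
\[
\int_V\Big({m\choose p}\Omega_0^{m-p} - \sum_{j=p}^{m-1}b_j{j\choose p}\chi^{m-j}\Omega_0^{j-p}\Big) > 0.
\]
This last inequality is deduced from~\eqref{eq:stability} applied to $V$ via the same binomial identity ${k-(n-m)\choose p}{k\choose n-m}/\big({m\choose p}{n\choose m}\big) = {k\choose p+n-m}/{n\choose p+n-m}$ used in the Claim inside the proof of Lemma~\ref{lem:localconesmooth}, now with strict inequality instead of its $\vep$-uniform form. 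Altogether this is exactly condition $(3)$ of Theorem~\ref{thm:main2} for the equation on $Z$, so by the induction hypothesis (Theorem~\ref{thm:main2} in dimension $m<n$) together with Theorem~\ref{purePDEthm} there is a smooth $\omega_Z\in[\Omega_0|_Z]$ solving it and satisfying the cone condition; since $b_0 = {n\choose m}^{-1}c_{n-m}$, rearranging gives $\omega_Z^m - \sum_{j=0}^{m-1}b_j\chi^{m-j}\omega_Z^j = {n\choose m}^{-1}a_Z\,\chi^m > 0$.

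I expect the only delicate point to be the middle step, where the strict numerical inequality on $Z$ must be verified for \emph{all} subvarieties of $Z$ at once, without the buffer of a uniform $\vep$. This, however, is a purely combinatorial identity among binomial coefficients, already carried out in the proof of Lemma~\ref{lem:localconesmooth}, so no genuinely new obstacle arises. The real improvement over the uniform case --- doing away with the $1-\vep/2$ margin in the cone inequality on $Z$ --- is packaged entirely into Lemma~\ref{lem:nonuniformlocal}, whose proof shows that strict pointwise positivity on the compact manifold $Z$ already suffices to extend to a neighbourhood.
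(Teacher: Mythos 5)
Your proposal is correct and is essentially the paper's own argument: the paper proves this lemma by remarking that one reruns the proof of Lemma \ref{lem:localconesmooth} (the quantity $a_Z$ is still positive by \eqref{eq:stability}, and the Claim's binomial computation gives the strict numerical condition on $Z$ needed for the dimension-$m$ induction hypothesis), and then invokes Lemma \ref{lem:nonuniformlocal} in place of Lemma \ref{lem:local}. Your only deviation --- solving $\omega^m=\sum_{j\geq 1}b_j\chi^{m-j}\omega^j+f\chi^m$ with no $(1-\vep/2)$ margin, so that $\omega_Z^m-\sum_j b_j\chi^{m-j}\omega_Z^j=\tbinom{n}{m}^{-1}a_Z\chi^m>0$ directly --- is a harmless simplification of the paper's suggestion to take $\vep_Z=a_Z/2$, since Lemma \ref{lem:nonuniformlocal} needs only strict positivity on the compact $Z$.
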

We only need to observe that the quantity $a_Z$ in the proof of Lemma \ref{lem:localconesmooth} is still positive, and that we can apply the same argument with $\vep_Z = a_Z/2$. Now we proceed to prove Proposition \ref{prop:nonuniformlocal}.

\subsection{Proof of Proposition \ref{prop:nonuniformlocal}}  We induct on the maximal dimension $m<n$ over all irreducible components of $Z$. For $m=0$, we have isolated points, for which the proposition is trivially true. Assume it is true for $0,1,2\ldots, m-1$. Removing the isolated points of $Z$, without loss of generality, we may assume that the dimension of every irreducible component of $Z$ is $\geq 1$. \\
\indent As in the proof of Theorem \ref{thm:main}, we resolve $Z$ to a disjoint union of smooth manifolds $\tilde{Z}=Z_r$ via a sequence of $r$ blowups $M_1, M_2\ldots$ at smooth  centres $C_i \subset Z_i$ where $Z_i$ are proper transforms. Let $\pi :\tilde{Z} \rightarrow Z$ be the projection map with $\pi^{-1}(Z_{sing}) = E$ as the union of all the exceptional divisors $E_i$. There exists an ample line bundle $L$ over $M$. For some large $N$, the bundle $L^N$ has a holomorphic section $\mathcal{S}_Y$ whose zero locus $Y$ is a smooth connected hypersurface such that $Y_1=Y\cap Z$ is a divisor in $Z$\footnote{Indeed, if $dim(Z)=1$, then for large $N$, $Y \cap Z$ is a non-empty collection of points and is hence a divisor in $Z$. If $dim(Z)\geq 2$, then since $dim(Y)+dim(Z)>dim(M)$ the analytic set $Y\cap Z$ is a union of connected subvarieties by the Fulton-Hansen connectedness theorem. Hence, if we ensure that there exists a $Y$ that intersects every component of $Z$ in at least one smooth point transversally, we are done by Bertini's theorem.}.  Let $N$ also be so large that $\tilde{L}_i=\pi^* L^N\otimes [-E_i] \otimes [-E_{i-1}]\ldots$ is an ample line bundle over $M_i$ for all $1\leq i \leq r$. Let $h$ be a metric on $L$ (over $M$) of positive curvature $F_h$ and $h_i$ be metrics on $[-E_i]$ such that $F_{h_i} \geq -\frac{N}{10r}\pi_i^* F_h$ and is positive when restricted to $E_i$. Then $H_i = \pi_i^*h^N \otimes \Pi_j h_j$ is a metric on $\tilde{L}_i$ of positive curvature $F_{H_i}$. Denote by $ \tilde\Omega_{0}$ the K\"ahler form $\pi^* \Omega_0 + C^{-1}F_{H_r}$ for a sufficiently large $C$. 

Now define $\tilde\chi_{s}:=\pi^*\chi+s^{5n} \tilde\Omega_{0}$ for $1\geq s\geq 0$, and
\begin{gather}
\tilde{a}_{s,s_0} = \frac{\int_{\tilde{M}} (\tilde\chi_s^n -\tilde\chi_{s_0}^n)}{\int_{\tilde{M}}\pi^*\chi^n}.
\label{conditionontildeas0}
\end{gather}
Note that $\tilde \chi_s \leq C_{\chi} \tilde\Omega_0$ for some $C_{\chi}>0$ independent of $0\leq s \leq 1$. We fix $\tilde{\varepsilon}$ and $s_0<1$ for the remainder of this paper so that $\tilde{\chi}_{s_0}$ is K\"ahler for all $0<s<s_0$ and the following hold for all $0<s<s_0$ and $0\leq t\leq 1$.
\begin{gather}
200\tilde{a}_{s,s_0} >\frac{f_m}{4^n}, \ \mathrm{and} \nonumber\\
\tilde{\chi}_{s_0}-\tilde{\chi}_{\frac{s_0}{2}} -\tilde{\varepsilon}N\frac{\pi^*F_h \vert \pi^* \mathcal{S}_Y \vert_{h^N}^2}{\vert \pi^* \mathcal{S}_Y \vert_{h^N}^2+t^2} \geq 0,
\label{s0fixed}
\end{gather}
where $f_m$ is as in \ref{thm:main}.\\

Our first step is to state and prove another concentration of mass result (like Theorem \ref{thm:conc-mass}). We need the following definitions, which generalise Definition \ref{degenconedef} in two directions, namely allowing powers of arbitrary order, and allowing $\chi$ to be possibly only semi-positive.

\begin{definition}\label{degenconedef-m-power}
Let $\Theta$ be a closed, positive $(1,1)$ current and $\chi$  a K\"ahler form on an $n$-dimensional K\"ahler manifold. We say that $\Theta$ satisfies the \emph{($\vep,n-p$)-uniform cone condition} ($0\leq p\leq n$):
\begin{equation}\label{singcone}
{n\choose p}(1-\vep)\Theta^{n-p} - \sum_{k=p}^{n-1} c_k {k\choose p} \chi^{n-k} \Theta^{k-p} \geq 0,
\end{equation}
if for any coordinate chart $U$ with $\Theta\Big|_U = \ddbar\varphi_U$, on $U_\delta:=\{x\in M~|~ B(x,\delta)\subset U\}$ (where $B(x,\delta)$ is a coordinate Euclidean ball of radius $\delta$ centred at $x$), we have $${n\choose p}(1-\vep)(\ddbar\varphi_{U,\delta})^{n-p} (x) - \sum_{k=p}^{n-1} c_k k \chi_0^{n-k} (\ddbar\varphi_{U,\delta})^{k-p} (x) \geq 0,$$  for any K\"ahler metric $\chi_0$ on $U$ with constant coefficients satisfying $\chi_0\leq \chi$ on $B(x,\delta)$. Here $\varphi_{U,\delta}$ are the mollifications of $\varphi_U$ as before. \\
\end{definition}
Note that in particular, Definition \ref{degenconedef} defines an {($\vep,n-1$)-uniform cone condition}.

\begin{definition}\label{def:strongcone}
 Let $\Theta$ be a closed, positive $(1,1)$-current and $\chi$ be a semi-positive smooth closed $(1,1)$-form on a K\"ahler manifold of dimension $n$. We say that $\Theta$ satisfies the $(n-p)$-degenerate strong cone condition 
\begin{equation}\label{singcone}
{n\choose p}\Theta^{n-p} - \sum_{k=1}^{n} c_k{k\choose p} \chi^{n-k} \Theta^{k} \geq 0,
\end{equation}
if there exist positive sequences $\vep_i,\mu_i, \nu_i, \gamma_i \rightarrow 0$ and a smooth K\"ahler form $\tilde{\Omega}_0$, such that the closed positive current $\Theta_i=\Theta (1+\gamma_i)+\mu_i\tilde{\Omega}_0$ satisfies the $(\vep_i,n)$-uniform cone condition  with respect to the K\"ahler form $\chi_i=\chi+\nu_i \tilde{\Omega}_0$ for every $i$.  
\end{definition}
\begin{remark}
It is easy to see that for a \emph{smooth} current $\Theta$, Definition \ref{def:strongcone} is equivalent to the usual pointwise definition. It is also not hard to see that Definition \ref{def:strongcone} generalises Definition \ref{degenconedef}.
\end{remark}
Recall that $Y\subset M$ is an ample divisor such that $Y$ intersects every irreducible component of $Z$ in a divisor. Define $\tilde{Y}=\pi^{-1}Y \cap \tilde{Z}$ and $\tilde{E}=E\cap \tilde{Z}$. Note that $\tilde{Y}$ and $\tilde{E}$ are divisors in $\tilde{Z}$. In fact, $E$ intersects $\tilde{Z}$ in simple normal crossings. Denote $\pi^*\chi$ by $\tilde{\chi}$. Once again, as in equation \eqref{eq:b}, for  $m<n$ and $j = 0,\cdots,m-1$,we let\begin{equation}\label{eq:b2}
b_j := \frac{c_{j+n-m}{j+n-m\choose n-m}}{{n\choose m}}.
\end{equation}

\begin{proposition}\label{prop:conc-massnonuniform}
 There exists a constant $\beta'>0$ and a current $\Theta \in [\pi^* \Omega_0]$ on $\tilde{Z}$ such that $\Theta \geq 2\beta' [\tilde{Y}]$ and $\Theta$ satisfies the following $m$-degenerate strong cone condition  (in the sense of Definition \ref{def:strongcone}):
$$\Theta^{m} - \sum_{j=0}^{m-1} b_j  \pi^*{\chi}^{m-j} \Theta^{j} \geq 0.$$
\end{proposition}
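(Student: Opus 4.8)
The plan is to mimic the proof of Theorem \ref{thm:conc-mass} but carry it out on the resolution $\tilde{Z}$, working with the semi-ample class $[\pi^*\Omega_0]$ (pulled back from $Z$, which may be singular, via $\pi$) and with the possibly-degenerate form $\tilde\chi = \pi^*\chi$. The new wrinkle compared to Theorem \ref{thm:conc-mass} is twofold: we must allow an arbitrary power $m$ (handled by the $(\vep,n-p)$-uniform cone condition of Definition \ref{degenconedef-m-power} with $p = n-m$ and coefficients $b_j$), and we must allow $\tilde\chi$ to be only semi-positive (handled by Definition \ref{def:strongcone}, i.e.\ perturbing $\tilde\chi$ to $\tilde\chi + \nu_i\tilde\Omega_0$ and $\Theta$ to $(1+\gamma_i)\Theta + \mu_i\tilde\Omega_0$). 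So first I would set up, for $0 \le t \ll 1$, a concentrating family of K\"ahler forms $\tilde\chi_{\tilde Y,t}$ on $\tilde Z$ in the class $[\tilde Y]$ that concentrates along $\tilde Y = \pi^{-1}(Y)\cap\tilde Z$, using the section $\pi^*\mathcal{S}_Y$ and a potential of the form $A^{-1}\log(|\pi^*\mathcal{S}_Y|^2_{h^N} + t^2)$, just as in the proof of Theorem \ref{thm:conc-mass}; because $\tilde Z$ is smooth this is routine.

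Next, the idea is to solve on the smooth compact manifold $\tilde Z$ (of dimension $m$) the auxiliary generalised Monge--Amp\`ere equation with coefficients $b_j$ and background class $[\pi^*\Omega_0]$ — but there is a subtlety: $[\pi^*\Omega_0]$ is only semi-ample, not K\"ahler, on $\tilde Z$. I would get around this exactly as in the proof of Lemma \ref{num-resolution} and Lemma \ref{localmetric}: replace $\pi^*\Omega_0$ by the genuine K\"ahler form $\tilde\Omega_{0,\delta} := \pi^*\Omega_0 + \delta\tilde\Omega_0$ (for a small parameter $\delta\to 0$, where $\tilde\Omega_0 = \pi^*\Omega_0 + C^{-1}F_{H_r}$ is the K\"ahler form fixed earlier), solve the equation
\begin{gather}
\Omega_{\delta,t}^m = \sum_{j=1}^{m-1} b_j\, \tilde\chi^{m-j}\,\Omega_{\delta,t}^{j} + f_{\delta,t}\,\tilde\chi^m
\end{gather}
in the class $[\tilde\Omega_{0,\delta}]$ with $f_{\delta,t} = \tilde\chi_{\tilde Y,t}^m/\tilde\chi^m - 1 + A_{\delta,t}$ (with $\tilde\chi$ perturbed to $\tilde\chi + \nu\tilde\Omega_0$ to make it K\"ahler), the constant $A_{\delta,t}>0$ fixed by the cohomological normalisation, invoking Theorem \ref{purePDEthm} once the smallness conditions $f_{\delta,t} > f_m$ are checked (this is where \eqref{s0fixed} and the choice of $s_0$ are used). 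The hypothesis of Proposition \ref{prop:conc-massnonuniform} — that we are inside the inductive step where the numerical condition holds on all lower-dimensional pieces — together with Lemma \ref{num-resolution}-type positivity estimates for $[\tilde\Omega_{0,\delta}]$, guarantees the cone condition is available so that the continuity path of Theorem \ref{purePDEthm} runs. Then I would take the solutions $\Omega_{\delta,t}$, which are uniformly bounded in mass by $[\tilde\Omega_{0,\delta}]\cdot[\tilde\chi^{m-1}] \le C$, and extract a weak limit $\Theta$ as $t\to 0$ (and $\delta\to 0$), lying in $[\pi^*\Omega_0]$ and non-negative.

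The mass-concentration step — showing $\Theta \ge 2\beta'[\tilde Y]$ — I would run exactly as Lemma \ref{pos-mass} and the Skoda--El Mir argument in the proof of Theorem \ref{thm:conc-mass}: estimate $\int_{E_\delta}\tilde\chi_{\tilde Y,t}\wedge\tilde\chi^{m-1}$ on the bad set where $\lambda_2\cdots\lambda_m$ is large, use Lemma 2.1 of \cite{dp} and the fact that $\tilde\chi \le 2\tilde\chi_{\tilde Y,t}$, deduce a lower bound $\int_{U\cap V_t}\Omega_{\delta,t}\wedge\tilde\chi^{m-1} > \delta_U$ near each point of $\tilde Y$, then apply Skoda--El Mir to conclude $\mathbf{1}_{\tilde Y}\Theta = \sum\beta_i[\tilde Y_i]$ with each $\beta_i>0$, taking $\beta' = \tfrac12\min_i\beta_i$. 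Finally, verify the degenerate strong cone condition: by Lemma \ref{lem:strictcone} (the perturbed cone lemma) applied to the smooth approximants $\Omega_{\delta,t}$, together with convexity of the relevant symmetric function and Jensen's inequality for the mollifications, the mollified limit satisfies the $(\vep_i, n-p)$-uniform cone condition for suitable $\vep_i,\mu_i,\nu_i,\gamma_i\to 0$ (with $p = n-m$), which is precisely Definition \ref{def:strongcone}. The main obstacle — and the reason this proposition is genuinely stronger than Theorem \ref{thm:conc-mass} — is the \emph{semi-positivity} of $\tilde\chi = \pi^*\chi$: it degenerates along the exceptional locus $\tilde E$, so the PDE \eqref{pertgenmaeqn} has a degenerate right-hand side and one cannot apply Theorem \ref{purePDEthm} directly. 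The fix is to perturb $\tilde\chi$ by $\nu\tilde\Omega_0$ throughout, solve the non-degenerate problem, and only let $\nu\to 0$ at the very end — which is exactly why Definition \ref{def:strongcone} is formulated with the extra sequence $\nu_i$; making the mass-concentration lower bound $\beta'$ uniform in $\nu$ (so it survives the limit) requires care, since the concentrating potential and the bound $\tilde\chi \le 2\tilde\chi_{\tilde Y,t}$ must be arranged independently of the perturbation parameter.
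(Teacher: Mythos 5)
Your overall architecture (concentrating model form built from $\pi^*\mathcal{S}_Y$, mass bound via the Demailly--Paun argument and Skoda--El Mir, passage to the degenerate strong cone condition via mollification, convexity and Lemma \ref{lem:jackinguptheconecondition}-type perturbation) matches the paper, but the central solvability step has a genuine gap. You propose to solve an intrinsic $m$-dimensional equation on $\tilde Z$ with coefficients $b_j$ in the perturbed class $[\pi^*\Omega_0+\delta\tilde\Omega_0]$ by invoking Theorem \ref{purePDEthm}, asserting that ``Lemma \ref{num-resolution}-type positivity estimates'' plus the induction hypothesis ``guarantee the cone condition is available.'' But Theorem \ref{purePDEthm} needs a subsolution, and producing one on $\tilde Z$ amounts to verifying a \emph{uniform} numerical criterion for the perturbed classes on \emph{all} subvarieties of $\tilde Z$ --- including those contained in the exceptional locus of $\pi\vert_{\tilde Z}$, where $[\pi^*\Omega_0]$ degenerates and all positivity must come from the $\delta\tilde\Omega_0$ term balanced against the $b_j(\pi^*\chi+\nu\tilde\Omega_0)$ terms. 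That is exactly the content of the triple-sum estimates in Lemma \ref{num-resolution}, and its proof uses a cone-condition metric in an ambient neighbourhood of the \emph{smooth} blow-up centre inside $M$; no analogue for the modification $\tilde Z\to Z$ (with $Z$ and the intermediate proper transforms singular) is available off the shelf, and you do not supply one. Moreover, the ambient hypothesis in Theorem \ref{thm:main2} is only the non-uniform condition, so even before perturbing one must convert it into uniform positivity; the paper does this by passing to $(1+t)[\Omega_0]$, a step absent from your proposal.

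The paper avoids your difficulty entirely by never posing an equation on $\tilde Z$: it applies the already-proven Lemma \ref{num-resolution} on the ambient $n$-dimensional resolution $\tilde M$ to the classes $(1+A_ts)[\pi^*(1+t)\Omega_0]+A_ts^2[\tilde\Omega_0]$ and $[\pi^*\chi]+s^2[\tilde\Omega_0]$, solves the full $n$-dimensional equation \eqref{eqn:massconc2} there via Theorem \ref{thm:main} (which supplies existence from the uniform numerical criterion alone, no subsolution needed), and then simply \emph{restricts} the resulting pointwise cone inequality $n\tilde\Omega_{s_t}^{n-1}-\sum_k kc_k\tilde\chi_{s_t}^{n-k}\tilde\Omega_{s_t}^{k-1}\geq 0$ to the $m$-dimensional submanifold $\tilde Z$, which is elementary linear algebra (as in Lemma \ref{lem:local}) and yields exactly the $b_j$-inequality. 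The mass concentration is then uniform for free because the model $\tilde\eta_t$ is built from the \emph{fixed} K\"ahler form $\tilde\chi_{s_0}$ and measured against $\tilde\chi_{s_0}^{m-1}$, whereas in your scheme the double limit in $\delta,\nu$ leaves the uniformity of $\beta'$ unresolved (you flag this yourself). A minor additional slip: your concentrating family should lie in (a perturbation of) the class of $\tilde\chi_{s_0}$, as $\log(|\pi^*\mathcal{S}_Y|^2_{h^N}+t^2)$ is a global function, not in the class $[\tilde Y]$. To repair your route you would essentially have to prove a new Lemma \ref{num-resolution} for $\tilde Z\to Z$ and a $(1+t)$-type uniformisation; as written, the proof does not go through.
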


To this end, we first need the following general lemma regarding the cone condition. This is a stronger (i.e. degenerate) version of Lemma \ref{lem:cone-condition-auxilary}. 
\begin{lemma}
Let $\tilde Z$ be an $m$-dimensional K\"ahler submanifold of an $n$-dimensional K\"ahler manifold, and for $0\leq j \leq m-1$, let $b_j \geq 0$ such that $\sum_j b_j>0$. Furthermore, let $\Omega, \chi \geq 0$ be smooth forms such  that  for all $0\leq p \leq m-1$, the following point-wise inequality is satisfied at a point $x\in \tilde{Z}$: \begin{gather}
\frac{1}{(1+2\varepsilon)^2}  \Omega^{m-p}(x) - \displaystyle \sum_{j=p}^{m-1} \frac{{j \choose p}}{{m \choose p}} b_j \chi^{m-j} \Omega^{j-p} (x)\geq 0.
\label{eq:generaldegencone}
\end{gather}
Assume that $ \chi(x) \leq C_{\chi}  \tilde\Omega_{0} (x)$ for some $C_{\chi}>0$. Then there exists an $s'_{0}=s'_0(C_{\chi}, m, b_j, \epsilon)$  such that for all $s<s'_0$, the following point-wise inequality holds at $x$:
\begin{align}\label{eq:jackedupconecondition}
\frac{1}{(1+\varepsilon)^2}(\Omega(1+10s) + 10s^2  \tilde\Omega_{0})^m (x)- \displaystyle \sum_{j=0}^{m-1} b_j (\chi+s^{5n}  \tilde\Omega_{0})^{m-j} (\Omega(1+10s) + 10s^2  \tilde\Omega_{0})^j (x) \geq 0,
\end{align}
\label{lem:jackinguptheconecondition}
\end{lemma}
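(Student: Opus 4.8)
\emph{Strategy.} The statement is pointwise at $x$, so I treat $\Omega,\chi,\tilde\Omega_0$ as constant semi-positive (respectively positive) $(1,1)$-forms on $T_x\tilde Z\cong\CC^m$ and use only that a wedge of semi-positive forms is semi-positive and that $0\le\alpha\le\beta$ implies $\alpha^k\wedge\gamma\le\beta^k\wedge\gamma$ for $\gamma\ge 0$; in particular $\Omega$ need not be positive-definite. The point is to expand $\Omega_s:=(1+10s)\Omega+10s^2\tilde\Omega_0$ the right way: put $\Delta:=10s\,\Omega+10s^2\tilde\Omega_0\ge 0$, so $\Omega_s=\Omega+\Delta$, keep $\chi_s:=\chi+s^{5n}\tilde\Omega_0$ intact, and apply the binomial theorem to get the exact identity
\begin{equation*}
\frac{1}{(1+\varepsilon)^2}\Omega_s^m-\sum_{j=0}^{m-1}b_j\,\chi_s^{m-j}\,\Omega_s^j=\sum_{i=0}^{m}\Psi_i\wedge\Delta^i,\qquad \Psi_i:=\frac{\binom mi}{(1+\varepsilon)^2}\,\Omega^{m-i}-\sum_{j=i}^{m-1}\binom ji\,b_j\,\chi_s^{m-j}\,\Omega^{j-i}.
\end{equation*}
As every $\Delta^i$ is semi-positive, it suffices to bound each $\Psi_i$ from below.

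\emph{Using the hypothesis at every level.} Write $\chi_s^{m-j}=\chi^{m-j}+R_j$ with $R_j:=\sum_{a=1}^{m-j}\binom{m-j}a\,s^{5na}\,\chi^{m-j-a}\,\tilde\Omega_0^{\,a}\ge 0$, and split $\Psi_i=\Psi_i^0-\sum_{j=i}^{m-1}\binom ji b_j R_j\,\Omega^{j-i}$ with $\Psi_i^0:=\frac{\binom mi}{(1+\varepsilon)^2}\Omega^{m-i}-\sum_{j=i}^{m-1}\binom ji b_j\,\chi^{m-j}\,\Omega^{j-i}$. Multiplying the hypothesis at level $p=i$ by $\binom mi$ reads precisely $\frac{\binom mi}{(1+2\varepsilon)^2}\Omega^{m-i}\ge\sum_{j=i}^{m-1}\binom ji b_j\,\chi^{m-j}\,\Omega^{j-i}$, hence $\Psi_i^0\ge\binom mi\,\theta\,\Omega^{m-i}$ with $\theta:=\frac1{(1+\varepsilon)^2}-\frac1{(1+2\varepsilon)^2}>0$ (for $i=m$ the inner sum is empty and $\Psi_m^0=\frac1{(1+\varepsilon)^2}\ge\theta$ trivially). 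Wedging with $\Delta^i\ge0$ and summing,
\begin{equation*}
\sum_{i=0}^m\Psi_i^0\wedge\Delta^i\ge\theta\sum_{i=0}^m\binom mi\,\Omega^{m-i}\wedge\Delta^i=\theta\,(\Omega+\Delta)^m=\theta\,\Omega_s^m .
\end{equation*}

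\emph{The error term.} It remains to absorb $\mathcal E:=\sum_{i,j,a\ge1}\binom ji\binom{m-j}a b_j\,s^{5na}\,\chi^{m-j-a}\tilde\Omega_0^{\,a}\,\Omega^{j-i}\,\Delta^i$. This is where $C_\chi$ enters: from $\Delta\ge 10s^2\tilde\Omega_0\ge 10s^2C_\chi^{-1}\chi$ and $\Delta\ge 10s\,\Omega$ one gets $\chi\le\frac{C_\chi}{10s^2}\Delta$, $\tilde\Omega_0\le\frac1{10s^2}\Delta$, $\Omega\le\frac1{10s}\Delta$; substituting these into each summand and counting degrees bounds it by a constant depending only on $m$, the $b_j$, and $C_\chi$ times $s^{\,5na-2(m-j-a)-2a-(j-i)}\Delta^m=s^{\,5na-2m+j+i}\Delta^m$. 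Since $a\ge1$ and $m<n$ the exponent is $\ge 5n-2m\ge 5$, so for $s\le1$ each summand is $\le C s^5\Delta^m$; combining with $\Delta\le(1+10s)\Omega_s$, which gives $\Delta^m\le 2^m\Omega_s^m$ for $s\le\tfrac1{10}$, we obtain $\mathcal E\le\widehat C\,s^5\,\Omega_s^m$ for some $\widehat C=\widehat C(m,b_j,C_\chi)$.

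\emph{Conclusion and the main obstacle.} Putting the pieces together, the left-hand side of \eqref{eq:jackedupconecondition} equals $\sum_i\Psi_i^0\wedge\Delta^i-\mathcal E\ge(\theta-\widehat C s^5)\,\Omega_s^m\ge 0$ for all $s<s'_0:=\min\{\tfrac1{10},(\theta/\widehat C)^{1/5}\}$, a constant depending only on $C_\chi,m,b_j,\varepsilon$. Everything but one step is the algebraic identity above plus the hypothesis invoked once at each level $p=0,\dots,m-1$; the only delicate point is the degree-counting in $\mathcal E$ — one must verify that after replacing $\chi,\Omega,\tilde\Omega_0$ by multiples of $\Delta$ the residual power of $s$ has a uniform positive lower bound, which is exactly where the specific scalings $10s$, $10s^2$, $s^{5n}$ together with $m<n$ are used, and this is what keeps $s'_0$ independent of $x$ and of $\Omega$.
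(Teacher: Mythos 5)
Your proof is correct and follows essentially the same route as the paper's: you use the hypothesis at every level $p$ through a binomial expansion of $(\Omega(1+10s)+10s^2\tilde\Omega_0)^m$, and absorb the error coming from replacing $\chi$ by $\chi+s^{5n}\tilde\Omega_0$ into the gap between $(1+\varepsilon)^{-2}$ and $(1+2\varepsilon)^{-2}$ using $\chi\leq C_\chi\tilde\Omega_0$ and the high power $s^{5n}$. The only differences are bookkeeping ones (expanding in $\Delta=10s\,\Omega+10s^2\tilde\Omega_0$ and dominating the error by $s^5\Delta^m\lesssim s^5\Omega_s^m$, rather than the paper's $P+Q$ splitting), so no further comment is needed.
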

\begin{proof}
The left hand side of Inequality \ref{eq:jackedupconecondition} can be written as $P+Q$, where :
\begin{align}
P &:= \frac{1}{(1+2\varepsilon)^2}(\Omega(1+10s) + 10s^2  \tilde\Omega_{0})^m - \displaystyle \sum_{j=0}^{m-1} b_j \chi^{m-j} (\Omega(1+10s) + 10s^2  \tilde\Omega_{0})^j \nonumber \\
Q &:= \left (\frac{1}{(1+\varepsilon)^2}-\frac{1}{(1+2\varepsilon)^2}\right )(\Omega(1+10s) + 10s^2  \tilde\Omega_{0})^m \nonumber \\
&- \displaystyle \sum_{j=0}^{m-1}\sum_{h=0}^{m-j-1} b_j {m-j \choose h} \chi^h s^{5n(m-j-h)} \tilde\Omega_{0} ^{m-j-h}(\Omega(1+10s) + 10s^2  \tilde\Omega_{0})^j.
\label{eq:splittingupinjackingup}
\end{align}
We first prove that $P\geq 0$. In the following calculation, by definition ${n \choose r}=0$ when $r>n$.
\begin{align*}
P &= \sum_{p=0}^m (10s^2)^p  \tilde\Omega_{0} ^p\frac{1}{(1+2\varepsilon)^2} {m \choose p} \Omega^{m-p} (1+10s)^{m-p} - \displaystyle \sum_{j=0}^{m-1} b_j \chi^{m-j} \sum_{p=0}^j (10s^2)^p  \tilde\Omega_{0} ^p {j \choose p} \Omega^{j-p} (1+10s)^{j-p} \nonumber \\
&\geq \sum_{p=0}^m (10s^2)^p (1+10s)^{m-p}  \tilde\Omega_{0} ^p\Bigg ( \frac{1}{(1+2\varepsilon)^2} {m \choose p} \Omega^{m-p}  - \displaystyle \sum_{j=0}^{m-1} b_j \chi^{m-j} {j \choose p} \Omega^{j-p} \Bigg ) \nonumber \\
&\geq 0,
\end{align*}
using Inequalities \ref{eq:generaldegencone}.\\
Next, we prove that $Q\geq 0$, thus completing the proof of Lemma \ref{lem:jackinguptheconecondition}. To begin with, since $\chi\leq C_{\chi}\tilde{\Omega}_0$, and $5m\leq 5n(m-j-h)$ for all $0\leq h \leq m-j-1$, assuming that $s<1$ we get the following inequality. 
\begin{align}
Q &\geq \left (\frac{1}{(1+\varepsilon)^2}-\frac{1}{(1+2\varepsilon)^2}\right )(\Omega(1+10s) + 10s^2  \tilde\Omega_{0})^m \nonumber \\
&- \displaystyle \sum_{j=0}^{m-1}\sum_{h=0}^{m-j-1} b_j {m-j \choose h} C_{\chi}^h  s^{5m} \tilde\Omega_{0} ^{m-j}(\Omega(1+10s) + 10s^2  \tilde\Omega_{0})^j. \nonumber 
\end{align}
We continue simplifying further to get the following (using $s<1$).
\begin{align}
\Rightarrow Q &\geq \sum_{j=0}^{m-1} (\Omega(1+10s) + 10s^2  \tilde\Omega_{0})^j (s^2  \tilde\Omega_{0})^{m-j} \Bigg ( \left (\frac{1}{(1+\varepsilon)^2}-\frac{1}{(1+2\varepsilon)^2}\right )10^{m-j} \nonumber \\
&-  \sum_{h=0}^{m-j-1} b_j {m-j \choose h} C_{\chi}^h  s^{3m} \Bigg ) \nonumber \\
&\geq 0,
\end{align}
for sufficiently small $s$ (depending only on $C_{\chi}, m, b_j, \epsilon$).
\end{proof}
Now we prove Proposition \ref{prop:conc-massnonuniform}.

\begin{proof}
For $0<t<1$, the K\"ahler classes $[\Omega_t] := (1+t)[\Omega_0]$ satisfy the uniform numerical criteria (with the $\vep$ possibly depending on $t$). Then by Lemma \ref{num-resolution}, there exists $A_t>0$ and $\frac{s_0}{2}>\overline{s}_t>0$ such that the classes $(1+A_ts)[\pi^*\Omega_t] + A_ts^2[\tilde\Omega_0]$ and $[\tilde\chi_{s,t}] := [\pi^*\chi] + s^2[\tilde\Omega_0]$ satisfies the uniform numerical criteria for all $s<\overline{s}_t<\frac{s_0}{2}<1$. In particular, the uniform numerical criteria are satisfied with $[\tilde\chi_{s,t}]$ replaced by $[\tilde\chi_s]$. Now let $s_t := \frac{\overline{s}_t (1-e^{-t})}{2A_t}$, and $[\tilde\Omega_{s_t}] := (1+A_ts_t)[\pi^*\Omega_t] + A_ts_t^2[\tilde\Omega_0]$. Then $[\tilde\Omega_{s_t}]$ and $[\tilde\chi_{s_t}]$ satisfy the uniform numerical criteria (where once again the uniformity might depend on $t$). Consider $$\mathcal{B}_{s_t} := \frac{\int_{\tilde M}\Big([\tilde\Omega_{s_t}]^n  - \sum_{j=1}^{n-1}c_j\tilde\chi_{s_t}^{n-j}[\tilde\Omega_{s_t}]^j - \tilde \chi_{s_0}^n +\tilde \chi_{s_t}^n\Big)}{\int_{\tilde M}\tilde\chi_{s_t}^n}.$$ 
Note that $\mathcal{B}_{s_t}$ can be negative but since by assumption $s_t<\frac{s_0}{2}$ for all $0<t<1$, we see that 
\begin{gather}
\mathcal{B}_{s_t}\geq a_{s_t,s_0}>\frac{f_m}{4^n200},
\label{ineq:onb}
\end{gather}
 where $f_m$ is as in \eqref{eqn:f}. \\
\indent Next, as before, we construct some model forms that concentrate on $\tilde Y$.  We let $$\tilde \eta_{t} = \tilde \chi_{s_0} + \tilde{\varepsilon}\ddbar\log(|\pi^*S|^2_{\pi^*h^N} + t^2).$$ We then find a metric $\tilde\Omega_{s_t}\in [\tilde\Omega_{s_t}]$  on $\tilde{M}$ solving 
\begin{gather}
\tilde\Omega_{s_t}^n = \sum_{j=1}^{n-1}c_j\tilde\chi_{s_t}^{n-j}\tilde\Omega_{s_t}^j + f_{t}\tilde\chi_{s_t}^n,
\label{eqn:massconc2}
\end{gather}
 where $f_{t} := \frac{\tilde\eta_{t}^n}{\tilde\chi_{s_t}^n} - 1+\mathcal{B}_{s_t}.$ Indeed, noting that $$\displaystyle \int_{\tilde{M}} \tilde\Omega_{s_t}^n \leq \int_{\tilde{M}}2^n((1+t)^n\pi^*\Omega_0^n (1+A_t s_t)^n + A_t^n s_t^{2n}\tilde\Omega_0^n),$$ and using Inequality \ref{ineq:onb}, we see that $f_t$ satisfies the conditions in Theorem \ref{thm:main} for all $0<t<<1$.\\
\indent Moreover, since the uniform positivity criteria are satisfied for every $0<t<1$, by Theorem \ref{thm:main}, there exists a smooth solution of \ref{eqn:massconc2} for all $0<t<<1$. By Lemma \ref{necessity} we see that the cone condition is met by $\tilde{\Omega}_{s_t}$ with respect to $\tilde{\chi}_{s_t}$. Choose a positive sequence $\gamma_i\rightarrow 0$. There exists a sequence $\delta_i>0$ such that the $(2\delta_i,n)$-uniform cone condition is satisfied by $\tilde{\Omega}_{s_t}(1+\gamma_i)$ with respect to $\tilde{\chi}_{s_t}$.  Lemma \ref{lem:jackinguptheconecondition} shows that there exists a positive sequence $\mu_i$ converging to $0$ such that $\vep_i$-cone condition is satisfied by $\tilde{\Omega}_{s_t}(1+\gamma_i)(1+10\mu_i)+10\mu_i^2 \tilde{\Omega}_0$ with respect to $\tilde{\chi}_{s_t}+\mu_i^{5n}\tilde{\Omega}_0$ for some $\vep_i$ depending explicitly on $\delta_i$. Denote a weak limit of $\tilde{\Omega}_{s_t}$ by $\Theta$. Then  arguing just as in the proof of Theorem \ref{thm:conc-mass}, it is easily seen that $\Theta(1+\gamma_i)(1+10\mu_i)+10\mu_i^2 \tilde{\Omega}_0$ satisfies the $(\vep_i,m)$-uniform cone condition with respect to the K\"ahler form $\pi^* \chi+\mu_i^{5n}\tilde{\Omega}_0$ as in Definition \ref{degenconedef-m-power}, and hence $\Theta$ satisfies the $m$-degenerate strong cone condition as in Definition \ref{def:strongcone}.  \\   
\indent To complete the proof of the Proposition, it is then enough to prove the following analogue of Lemma \ref{pos-mass} to prove that any weak limit $\Theta$ of $\tilde{\Omega}_{s_t} \vert_Z$ satisfies $\Theta \geq 2 \beta ' [\tilde{Y}]$.  
\begin{lemma}\label{pos-mass-deg}
For any $y\in \tilde Y$, there exists a neighbourhood $U$ and a constant $\be_U>0$ such that for all $t<<t_0$, $$\int_{U\cap V_t}\tilde\Omega_{s_t}\wedge\tilde\chi_{s_0}^{m-1}>\be_U,$$ where as before, $V_t=\{|S|_{h^N}<t\}$.
\end{lemma}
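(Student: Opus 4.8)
The plan is to run the argument of Lemma~\ref{pos-mass} (i.e. of \cite[Lemma~2.1]{dp}) directly on the $m$-dimensional manifold $\tilde Z$, with $\tilde\eta_t|_{\tilde Z}$ playing the role that $\chi_t$ played there. Every cohomological quantity below is bounded uniformly in $t$, since $[\tilde\Omega_{s_t}]\to[\pi^*\Omega_0]$ and $[\tilde\eta_t]=[\tilde\chi_{s_0}]$ as $t\to0$. Two properties of the model are needed: by \eqref{s0fixed} (together with the standard lower bound for smoothed logarithms of sections), $\tilde\eta_t\ge\tilde\chi_{s_0/2}\ge\tilde\chi_{s_t}$, so $\tilde\eta_t$ is K\"ahler with $\tilde\chi_{s_0}\le 2^{5n}\tilde\eta_t$ uniformly in $t$; and, since $Y\cap Z$ is a proper divisor in $Z$, the section $\pi^*S$ does not vanish identically on $\tilde Z$, so $\tilde\eta_t|_{\tilde Z}$ is a genuine smoothing on $\tilde Z$ of a form carrying a positive Lelong current along $\tilde Y$ whose mass concentrates in the tubes $V_t=\{|\pi^*S|_{\pi^*h^N}<t\}$. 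Concretely, \cite[Lemma~2.1(iii)]{dp} gives a $\delta(U)>0$ with $\int_{U\cap V_t}\tilde\eta_t\wedge\tilde\chi_{s_0}^{m-1}\ge\delta(U)$ for all $t\ll1$; and on the part of $U\cap V_t$ where $\tilde\eta_t^n<2C_0\tilde\chi_{s_t}^n$ (with $C_0:=1+|f_m|/(4^n\cdot200)$) one has $\tilde\eta_t\le 2C_0\tilde\chi_{s_t}\le 2C_0\tilde\chi_{s_0}$, so that $\int_{U\cap V_t\cap\{\tilde\eta_t^n<2C_0\tilde\chi_{s_t}^n\}}\tilde\eta_t\wedge\tilde\chi_{s_0}^{m-1}\le 2C_0\int_{U\cap V_t}\tilde\chi_{s_0}^m\to0$ as $t\to0$ ($V_t$ shrinks to the measure-zero set $\tilde Y$). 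Hence, for $t$ small, the set $V_t':=U\cap V_t\cap\{\tilde\eta_t^n\ge 2C_0\tilde\chi_{s_t}^n\}$ satisfies $\int_{V_t'}\tilde\eta_t\wedge\tilde\chi_{s_0}^{m-1}\ge\delta(U)/2$.

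Next I set up the ``bad set''. Let $\mu_1\le\cdots\le\mu_m$ be the eigenvalues of $\tilde\Omega_{s_t}|_{\tilde Z}$ with respect to $\tilde\eta_t|_{\tilde Z}$ and put $E_\delta:=\{z\in\tilde Z: \mu_2(z)\cdots\mu_m(z)>C\delta^{-1}\}$. Since $\tilde\Omega_{s_t}^{m-1}\wedge\tilde\eta_t\ge\mu_2\cdots\mu_m\,\tilde\eta_t^m$ on $\tilde Z$ and $\int_{\tilde Z}\tilde\Omega_{s_t}^{m-1}\wedge\tilde\eta_t=[\tilde\Omega_{s_t}]^{m-1}\cdot[\tilde\chi_{s_0}]\cdot[\tilde Z]\le C$, we get $\int_{E_\delta}\tilde\eta_t^m\le\delta$, hence $\int_{E_\delta}\tilde\eta_t\wedge\tilde\chi_{s_0}^{m-1}\le 2^{5n(m-1)}\delta$ by \cite[Lemma~2.1]{dp} and $\tilde\chi_{s_0}\le 2^{5n}\tilde\eta_t$. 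Fixing $\delta$ small (independently of $t$) so that $2^{5n(m-1)}\delta\le\delta(U)/4$ gives $\int_{V_t'\cap E_\delta^c}\tilde\eta_t\wedge\tilde\chi_{s_0}^{m-1}\ge\delta(U)/4$.

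The crux — and the genuinely new point compared with Lemma~\ref{pos-mass} — is a pointwise lower bound $\det_{\tilde\eta_t|_{\tilde Z}}(\tilde\Omega_{s_t}|_{\tilde Z})=\mu_1\cdots\mu_m\ge c>0$ on $V_t'$, which takes over the role of ``$\Omega_t^n\ge\chi_t^n-\chi^n$''. From \eqref{eqn:massconc2}, $\sum_jc_j\tilde\chi_{s_t}^{n-j}\tilde\Omega_{s_t}^j\ge0$, and $\mathcal{B}_{s_t}>f_m/(4^n\cdot200)$ (which holds by \eqref{s0fixed}), one has $\tilde\Omega_{s_t}^n\ge f_t\tilde\chi_{s_t}^n\ge\tilde\eta_t^n-C_0\tilde\chi_{s_t}^n$ on $\tilde M$, so on $V_t'$, where $\tilde\eta_t^n\ge2C_0\tilde\chi_{s_t}^n$, we get $\det_{\tilde\eta_t}(\tilde\Omega_{s_t})=\tilde\Omega_{s_t}^n/\tilde\eta_t^n\ge\tfrac12$ on $\tilde M$. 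If $\lambda_1\le\cdots\le\lambda_n$ are the eigenvalues of $\tilde\Omega_{s_t}$ with respect to $\tilde\eta_t$ at a point of $\tilde Z$, Cauchy interlacing for the restriction to $T\tilde Z\subset T\tilde M$ gives $\mu_i\ge\lambda_i$, whence
\[
\mu_1\cdots\mu_m\ \ge\ \prod_{i=1}^m\lambda_i\ =\ \frac{\det_{\tilde\eta_t}(\tilde\Omega_{s_t})}{\prod_{i=m+1}^n\lambda_i}\ \ge\ \frac12\Big(\prod_{i=m+1}^n\lambda_i\Big)^{-1}.
\]
The obstacle — and the step I expect to require the most work — is to bound the product $\prod_{i=m+1}^n\lambda_i$ of the $n-m$ largest of these eigenvalues from above along $V_t'$, i.e. to rule out $\tilde\Omega_{s_t}$ blowing up in directions transverse to $\tilde Z$; note that the $n$-dimensional mass bounds cannot be invoked for this, as $\tilde Z$ is Lebesgue-null in $\tilde M$. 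I anticipate extracting this ``transverse non-degeneracy'' from the cone condition that $\tilde\Omega_{s_t}$ satisfies with respect to $\tilde\chi_{s_t}$ on $\tilde M$ (which already forces at least $\zeta$ of its eigenvalues to stay bounded below, and, together with $\det_{\tilde\chi_{s_t}}\tilde\Omega_{s_t}\ge f_t\to\infty$ near $\pi^{-1}(Y)$, constrains the remainder), possibly aided by arranging $Y$ so that $\pi^{-1}(Y)$ meets $\tilde Z$ transversally. Granting $\prod_{i=m+1}^n\lambda_i\le C'$ on $V_t'$, we obtain $\mu_1\cdots\mu_m\ge c:=1/(2C')>0$ there.

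Finally, on $V_t'\cap E_\delta^c$ we then have $\mu_1=(\mu_1\cdots\mu_m)/(\mu_2\cdots\mu_m)\ge c\delta/C$, i.e. $\tilde\Omega_{s_t}|_{\tilde Z}\ge(c\delta/C)\,\tilde\eta_t|_{\tilde Z}$, and therefore
\[
\int_{U\cap V_t}\tilde\Omega_{s_t}\wedge\tilde\chi_{s_0}^{m-1}\ \ge\ \int_{V_t'\cap E_\delta^c}\tilde\Omega_{s_t}\wedge\tilde\chi_{s_0}^{m-1}\ \ge\ \frac{c\delta}{C}\int_{V_t'\cap E_\delta^c}\tilde\eta_t\wedge\tilde\chi_{s_0}^{m-1}\ \ge\ \frac{c\delta\,\delta(U)}{4C}.
\]
Setting $\be_U:=c\delta\,\delta(U)/(4C)$ and choosing $t_0$ small enough that all the ``$t\ll1$'' conditions used above are met completes the proof, modulo the eigenvalue bound flagged in the previous paragraph, which is the principal obstacle.
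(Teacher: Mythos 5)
Your overall framework coincides with the paper's: work intrinsically on $\tilde Z$ with the concentrating model $\tilde\eta_t$, invoke the Demailly--Paun estimate $\int_{U\cap V_t}\tilde\eta_t\wedge\tilde\chi_{s_0}^{m-1}\geq\delta(U)$ (which is legitimate since $\tilde\chi_{s_0}$ is K\"ahler and $\pi^*\mathcal{S}_Y$ does not vanish identically on $\tilde Z$), and discard a bad set using the cohomological bound on $\int_{\tilde Z}\tilde\Omega_{s_t}^{m-1}\wedge\tilde\eta_t$. The problem is precisely the step you flag as the ``principal obstacle''. You try to obtain the lower bound on the restricted determinant $\mu_1\cdots\mu_m$ from the $n$-dimensional inequality $\tilde\Omega_{s_t}^n\geq\tilde\eta_t^n-C_0\tilde\chi_{s_t}^n$ together with Cauchy interlacing, which forces you to postulate $\prod_{i=m+1}^n\lambda_i\leq C'$ on $V_t'$. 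This estimate is not merely hard; it is not a consequence of the available information. As $t\to 0$ the relevant part of $V_t'$ lies where $f_t\geq \tilde\eta_t^n/\tilde\chi_{s_t}^n-C_0\to\infty$, so the equation forces $\det_{\tilde\chi_{s_t}}\tilde\Omega_{s_t}\to\infty$ there, i.e.\ some eigenvalues must blow up, while the cone condition is monotone (it survives adding any positive form) and therefore only ever yields \emph{lower} bounds on eigenvalues and their products, never upper bounds. Nothing prevents the blow-up from sitting entirely in directions transverse to $\tilde Z$ at a given point: already for $n=2$, $m=1$ and the $J$-equation, $\lambda_{\mathrm{tan}}=c_1$ and $\lambda_{\mathrm{trans}}\approx 2f_t/c_1$ satisfy both the equation and the cone condition pointwise, and then your interlacing bound degenerates exactly where the concentration must be detected. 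So the proposal has a genuine gap at its crux, and the particular estimate you hope to ``grant'' is the wrong thing to aim for.

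The paper closes this step by a different mechanism that never requires any control of transverse eigenvalues: the pointwise cone condition $n\tilde\Omega_{s_t}^{n-1}-\sum_k kc_k\tilde\chi_{s_t}^{n-k}\tilde\Omega_{s_t}^{k-1}\geq0$ on $\tilde M$ implies, as in \eqref{eqn:localconeest}, all of its codimension-$p$ analogues ${n\choose p}\tilde\Omega_{s_t}^{n-p}-\sum_{k\geq p}c_k{k\choose p}\tilde\chi_{s_t}^{n-k}\tilde\Omega_{s_t}^{k-p}\geq0$, and taking $p=n-m$ and restricting to $\tilde Z$ gives the $m$-dimensional inequality $\tilde\Omega_{s_t}^{m}-\sum_{j=0}^{m-1}b_j\tilde\chi_{s_t}^{m-j}\tilde\Omega_{s_t}^{j}\geq0$ for $\tilde\Omega_{s_t}|_{\tilde Z}$ as $(m,m)$-forms on $\tilde Z$. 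It is this restricted cone-type inequality, rather than the $n$-dimensional Monge--Amp\`ere lower bound plus interlacing, that the paper feeds into the scheme of Lemma \ref{pos-mass} run on $\tilde Z$; that is where your argument and the paper's part ways, and where yours is left incomplete.
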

The proof proceeds exactly as in Lemma \ref{pos-mass}. The key observation is that $\tilde\chi_{s_0}$ is K\"ahler throughout $\tilde{Z}$, and hence the argument in proof of Lemma 2.1 in \cite{dp} implies that for each $y\in \tilde Y$ there exists a neighbourhood $U$ and a constant $\be(U)$ such that $$\int_{U\cap V_t}\tilde\eta_t\wedge\tilde\chi_{s_0}^{m-1}>\be(U).$$
Now, since $\tilde{\Omega}_{s_t}$ satisfies $$n\tilde{\Omega}_{s_t}^{n-1}-\sum_{k=1}^{n-1} kc_k\tilde{\chi}_{s_t}^{n-k}\tilde{\Omega}_{s_t}^{k-1}\geq 0,$$
we see that $$\tilde{\Omega}_{s_t}^{m} - \sum_{j=0}^{m-1} b_j  \tilde{\chi}_{s_t}^{m-j} \tilde{\Omega}_{s_t}^{j} \geq 0$$ when restricted to $\tilde{Z}$.\\  

\end{proof}

Akin to the proof of Proposition \ref{prop:conc-massnonuniform}, denote by $\tilde{\Omega}_s$ the form $\pi^* \Omega_0 (1+10s)+10s^2 \tilde\Omega_0$. We now use Proposition \ref{prop:conc-massnonuniform} to prove a similar proposition (with more positivity in some sense).
\begin{proposition}\label{prop:conc-massnonuniform-nondeg}
There exist constants $\beta''>0, K>1, \epsilon >0$ and $s_0'<s_0/2$ such that for all $s<s'_{0}$, there exists a K\"ahler current $T_{s} \in [\tilde{\Omega}_{s}]$ on $\tilde{Z}$ with the following properties. 
\begin{enumerate}
\item $T'_{s}=T_{s}-250\epsilon \tilde\Omega_0 \geq \beta' [\tilde{Y}] + \beta''[\tilde{E}] + \frac{\tilde\Omega_0}{K}$. In particular, $T_s$ and $T_s'$ have Lelong numbers at least $\min(\be',\be'')$ along $\tilde Y \cup \tilde E$.
\item  For all $c>0$, the Lelong sublevel set $E_{c}(T_{s}) \subset E_{\frac{c}{3}}(\Theta)\cup \tilde{Y}\cup \tilde{E}$. 
\item $T'_{s}$ satisfies the following degenerate cone condition  (in the sense of Definition \ref{degenconedef}) away from $\tilde{Y}\cup \tilde{E}$:
$$\frac{1}{(1+\epsilon)^2}(T'_{s})^{m} - \sum_{j=0}^{m-1} b_j  \tilde{\chi}_s^{m-j} (T'_{s})^{j} \geq 0.$$
\end{enumerate}
\end{proposition}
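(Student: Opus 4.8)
The plan is to take $T_s$ to be a convex combination of the current $\Theta$ produced by Proposition~\ref{prop:conc-massnonuniform} (which concentrates on $\tilde Y$ and satisfies the $m$-degenerate strong cone condition) with a second current $\Psi$ concentrating on $\tilde E$, then to perturb by $10s^2\tilde\Omega_0$ so as to land in $[\tilde\Omega_s]$, and to put $T_s':=T_s-250\epsilon\tilde\Omega_0$. The preliminary step is to construct $\Psi$: I want a K\"ahler current $\Psi\in[\pi^*\Omega_0]$ on $\tilde Z$, smooth (and K\"ahler) on $\tilde Z\setminus\tilde E$, with $\Psi\geq 2\beta_0''[\tilde E]+\frac1{K_0}\tilde\Omega_0$ for fixed $\beta_0'',K_0>0$, and satisfying on $\tilde Z\setminus\tilde E$ the cone condition with a uniform strict margin, i.e. $\frac1{1+\tau}\Psi^{m}-\sum_{j=0}^{m-1}b_j(\pi^*\chi)^{m-j}\Psi^{j}>0$ for some fixed $\tau>0$. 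I would obtain the mass along $\tilde E$ and the K\"ahler-current lower bound exactly as in Proposition~\ref{prop:conc-massnonuniform}, from the bigness of $[\pi^*\Omega_0]$ on $\tilde Z$ (whose non-ample locus lies in $\tilde E$) together with the ampleness of $\tilde L_r|_{\tilde Z}$; and I would obtain the uniform strict margin by applying the inductive hypothesis (Theorem~\ref{thm:main2} in dimension $m<n$, with Theorem~\ref{purePDEthm}) on the projective manifold $\tilde Z$, in an ample class obtained from $[\pi^*\Omega_0]$ by adding a small ample class and subtracting $\tilde E$: the resulting smooth solution satisfies the cone condition strictly at each point, hence with a uniform margin by compactness of $\tilde Z$, and that margin is untouched when one trades the ample perturbation back for mass along $\tilde E$ and returns to $[\pi^*\Omega_0]$. (The numerical positivity needed on $\tilde Z$ is inherited from \eqref{eq:stability} via the non-uniform analogue of Lemma~\ref{num-resolution}.)

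With $\Psi$ in hand I would fix the constants in the order $\lambda\in(0,\tfrac12)$, then $\epsilon>0$ small (small enough for Lemma~\ref{lem:jackinguptheconecondition} and small relative to $\lambda\tau$ and to $\lambda/K_0$), then $s_0'\in(0,s_0/2)$ small, and for $s<s_0'$ set $T_s:=(1+10s)[(1-\lambda)\Theta+\lambda\Psi]+10s^2\tilde\Omega_0$ and $T_s':=T_s-250\epsilon\tilde\Omega_0$. Since $(1-\lambda)\Theta+\lambda\Psi\in[\pi^*\Omega_0]$ we get $T_s\in[\tilde\Omega_s]$. Property~(1) follows from $\Theta\geq 2\beta'[\tilde Y]$ and the bounds on $\Psi$: one reads off $T_s'\geq\beta'[\tilde Y]+\lambda\beta_0''[\tilde E]+(\tfrac\lambda{K_0}-250\epsilon)\tilde\Omega_0$, so $\beta'':=\lambda\beta_0''$ and $K:=2K_0/\lambda$ work, and $T_s=T_s'+250\epsilon\tilde\Omega_0$ is a K\"ahler current. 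Property~(2) is immediate, since $\Psi$ has zero Lelong number off $\tilde E$: for $x\notin\tilde E$, $\nu(T_s',x)=(1+10s)(1-\lambda)\nu(\Theta,x)$ with $(1+10s)(1-\lambda)\le 3$, so $\nu(T_s',x)\ge c$ forces $\nu(\Theta,x)\ge c/3$ and $E_c(T_s')\subset E_{c/3}(\Theta)\cup\tilde E$.

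For Property~(3) I would write $T_s'=(1+10s)\Omega_s+10s^2\tilde\Omega_0$ with the genuine convex combination $\Omega_s:=(1-\lambda)\Theta+\lambda\big(\Psi-\tfrac{250\epsilon}{\lambda(1+10s)}\tilde\Omega_0\big)$, and work on $\tilde Z\setminus(\tilde Y\cup\tilde E)$, where $\Psi$ is smooth and $\Psi-\tfrac{250\epsilon}{\lambda(1+10s)}\tilde\Omega_0\geq(\tfrac1{K_0}-\tfrac{250\epsilon}{\lambda})\tilde\Omega_0>0$. Two facts drive the argument: the $p=0$ (top-power) cone condition is the strongest in the codimension hierarchy — the inequality $S_{m-j;I}\le S_{m-j}$ between elementary symmetric functions propagates it to all codimensions — so the mollified approximants of $\Theta$ and $\Psi$ satisfy the codimension-$p$ cone condition for every $p$; and, the function governing the cone condition being convex (Lemma~\ref{usefulpropslemma}), the convex combination $\Omega_s$ inherits a $(1+2\epsilon)^{-2}$-buffered cone condition of all codimensions with respect to $\pi^*\chi$, crucially with a margin comparable to $\lambda\tau$ injected by the $\Psi$-factor, which does not degenerate with the approximation index (the bound $\Psi\ge\frac1{K_0}\tilde\Omega_0$, via the comparison estimates of Lemma~\ref{usefulpropslemma}, controls uniformly the damage done by the $\epsilon$-subtraction). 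Then Lemma~\ref{lem:jackinguptheconecondition}, applied with this $\Omega_s$ and $\chi=\pi^*\chi$ (note $\pi^*\chi\le C_\chi\tilde\Omega_0$), upgrades this to the $(1+\epsilon)^{-2}$-buffered cone condition for $\Omega_s(1+10s)+10s^2\tilde\Omega_0=T_s'$ with respect to $\pi^*\chi+s^{5n}\tilde\Omega_0=\tilde\chi_s$, which is exactly Property~(3) in the sense of Definition~\ref{degenconedef}; here ``away from $\tilde Y\cup\tilde E$'' is precisely what keeps $\Psi$ and the correction smooth, and the passage between the mollified/approximant formulation and Definition~\ref{degenconedef} is routine.

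The hard part will be Property~(3): one must carry the cone condition from $\Theta$ — which holds only with respect to $\pi^*\chi$ and with a margin that collapses — over to $T_s'$, which must satisfy it with respect to the genuinely K\"ahler $\tilde\chi_s$ and with the fixed buffer $(1+\epsilon)^{-2}$, while simultaneously absorbing the \emph{negative} term $-250\epsilon\tilde\Omega_0$ (present because $T_s$, not $T_s'$, is the K\"ahler current to be fed to the gluing proposition). This is what forces the ordering $\lambda\gg\epsilon\gg s$ and, above all, forces $\Psi$ to satisfy the cone condition with a uniform strict margin, so that the convex combination retains a margin comparable to $\lambda\tau$ after the subtraction; the engine is the convexity of the Hessian operators together with Lemma~\ref{lem:jackinguptheconecondition}. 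The secondary difficulty is the construction of such a $\Psi$ inside the non-ample class $[\pi^*\Omega_0]$ on $\tilde Z$, i.e.\ reconciling a strict cone condition, the prescribed cohomology class, and the concentration on $\tilde E$; once these two points are settled, Properties~(1)--(2) and the remaining bookkeeping are straightforward.
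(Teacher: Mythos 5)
The weak point is the construction of $\Psi$, and it is a genuine gap rather than a detail. You need a current in the fixed class $[\pi^*\Omega_0]$ on $\tilde Z$, smooth and K\"ahler off $\tilde E$, carrying mass along $\tilde E$, and satisfying the cone condition with a \emph{uniform strict} margin $\tau$ with respect to $\pi^*\chi$. To get it you invoke the induction hypothesis on $\tilde Z$ in a class of the form $[\pi^*\Omega_0]+\delta A-\delta'[\tilde E]$ and assert that the required numerical positivity on $\tilde Z$ ``is inherited from \eqref{eq:stability} via the non-uniform analogue of Lemma \ref{num-resolution}.'' No such analogue is available, and this is precisely where the non-uniform hypothesis bites: the proof of Lemma \ref{num-resolution} chooses $A$ (and absorbs the $-\delta'[\tilde E]$ and the K\"ahler correction to $\pi^*\chi$) by comparing the perturbation against a \emph{uniform} fraction $\vep$ of $\int_V\Omega_0^{n-p}$; under mere positivity the infimum over subvarieties of the normalized excess can be $0$, so a fixed perturbation $\delta A-\delta'[\tilde E]$ together with a genuinely K\"ahler reference form on $\tilde Z$ may destroy positivity for some subvariety. (This is also why the paper's Section 5 works with the uniformly positive classes $(1+t)[\Omega_0]$ and only ever transports a \emph{degenerate} cone condition into $[\pi^*\Omega_0]$.) Producing a strict-margin representative (even only off $\tilde E$) of $[\pi^*\Omega_0]$ on $\tilde Z$ is essentially the hard content one is trying to establish, so your construction of $\Psi$ begs a substantial part of the question. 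A secondary, fixable, inaccuracy: when you ``return to $[\pi^*\Omega_0]$'' you must subtract the K\"ahler representative of $\delta A$, so the margin and the bound $\Psi\geq\frac{1}{K_0}\tilde\Omega_0$ are \emph{not} untouched; since $\tau$ depends on the class (hence on $\delta,\delta'$), the claim that the damage is negligible is circular as written (one can avoid the ample summand altogether because $[\pi^*\Omega_0]-\delta'[\tilde E]$ is already K\"ahler on $\tilde Z$, but that does not repair the numerical-criterion issue above).

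For contrast, the paper needs no auxiliary $\Psi$ and no new numerical verification: it perturbs $\Theta$ by the globally exact currents $-\beta'\spbp\log|\pi^*\mathcal S_Y|^2_{h^N}+\beta''\spbp\log|s_E|^2_{\otimes h_i^{-1}}$, staying in $[\pi^*\Omega_0]$. Since $\Theta\geq 2\beta'[\tilde Y]$, trading $\beta'[\tilde Y]$ for the smooth form $\beta' N\pi^*F_h$, and adding $\beta''[\tilde E]$ together with $\beta''\sum F_{h_i}$, yields (for $\beta''$ small) a smooth perturbation bounded below by $250\epsilon\tilde\Omega_0+\tilde\Omega_0/K$ away from $\tilde Y\cup\tilde E$; Lemma \ref{lem:strictcone}, applied to the smooth approximants $\tilde\Omega_{s_t}$ of $\Theta$, then converts the degenerate cone condition into a uniform $(1+4\epsilon)^{-2}$ margin off $\tilde Y\cup\tilde E$ ``for free,'' and Lemma \ref{lem:jackinguptheconecondition} plus a weak limit give properties (1)--(3). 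If you want to salvage your convex-combination scheme, the missing ingredient is exactly such a source of uniform margin that does not require solving an equation in (a perturbation of) $[\pi^*\Omega_0]$ on $\tilde Z$; the ample divisor $Y$ via the Lelong--Poincar\'e trick is the paper's answer, and your Steps (1), (2) and the convexity bookkeeping in (3) are otherwise consistent with the paper's framework.
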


\begin{proof} [Proof of Proposition \ref{prop:conc-massnonuniform-nondeg}]
 Let $s_E$ be an induced section over $\tilde{M}$ of $[E]$ defining $E$. For small enough $\beta''$, we can easily see that the current $T_0=\Theta-\beta'\spbp \ln \vert \pi^* \mathcal{S}_Y \vert_{h^N} ^2 +\beta'' \spbp \ln \vert s_E \vert^2_{\otimes h_i^{-1}}$ is K\"ahler throughout $\tilde{Z}$ and satisfies $T_0'=T_0-250\epsilon_0 \tilde\Omega_0 \geq \beta'[\tilde{Y}]+\beta''[\tilde{E}]+\frac{\tilde\Omega_0}{K}$ for constants $\epsilon_0>0$ and  $K>1$. In fact, going back to $\tilde{\Omega}_{s_t}$ in the proof of Proposition \ref{prop:conc-massnonuniform}, we see that $$\tilde{\Omega}'_{t}:=\tilde{\Omega}_{s_t}-250\epsilon\tilde\Omega_0 -\beta'\spbp \ln \vert \pi^* \mathcal{S}_Y \vert_{h^N} ^2 +\beta'' \spbp \ln \vert s_E \vert^2_{\otimes h_i^{-1}} \geq \tilde\Omega_{s_t} +\frac{\tilde\Omega_0}{K}$$ for all $\epsilon<\epsilon_0$ and all $0<t<1$ away from $\tilde{Y}$. Using Lemma \ref{lem:strictcone}, we see that $\tilde\Omega_t'$ satisfies the following cone condition pointwise  on $\tilde Z \setminus (\tilde{Y}\cup \tilde{E})$.
\begin{gather}
\frac{1}{(1+4\epsilon)^2}(\tilde{\Omega}'_{t})^m-\sum_{j=0}^{m-1}b_j \tilde\chi_{s_t}^{m-j} (\tilde{\Omega}'_{t})^j \geq 0,
\end{gather}
for some uniform choice of $\epsilon<\epsilon_0$ and all $0<t<<1$. Since $\tilde\chi_{s_t} \leq C_{\chi}  \tilde\Omega_{0}$ for a uniform $C_{\chi}$ (independent of $t$) for all points of $\tilde{M}$, we can apply Lemma \ref{lem:jackinguptheconecondition} (and monotonicity in $\Omega$) to conclude that there exists $s'_{0}$ such that for $s<s'_{0}$, 
\begin{gather}
\frac{1}{(1+2\epsilon)^2}(\tilde{\Omega}''_{t,s})^m-\sum_{j=0}^{m-1}b_j \tilde\chi_{s_t,s}^{m-j} (\tilde{\Omega}''_{t,s})^j \geq 0,
\end{gather}
where $\tilde{\Omega}''_{t,s} = \tilde{\Omega}'_{t}(1+10s)+10s^2  \tilde\Omega_{0}$ and $\tilde\chi_{s_t,s} = \tilde{\chi}_{s_t}+s^{5n} \tilde\Omega_{0}$. \\
\indent A weak limit of  $\tilde{\Omega}''_{t,s}$ is $T'_{s}=T_0'(1+10s)+10s^2  \tilde\Omega_{0}$. Hence, its Lelong sublevel set $E_c(T'_{s})$ is contained in $E_{\frac{c}{1+10s}}(\Theta)\cup \tilde{Y} \cup \tilde{E}$. By shrinking $s_0'$, we may assume that for all $s<s_0'$, $1+10s<\frac{3}{2}$ and so $E_c(T'_{s}) \subset E_{c/3} (\Theta) \cup \tilde{Y} \cup \tilde{E}$. By the reasoning in the proof of Theorem \ref{thm:conc-mass}, we see that $T'_{s}$ satisfies the desired degenerate cone condition. 
\end{proof}
\indent Let $c>0$ (independent of $s$) be a constant such that $T_{s}$ has a Lelong number $\geq 100c$ on $\tilde{Y}\cup \tilde{E}$. We now proceed to regularise $T_{s}$ using the regularised maximum construction and glue it with a smooth metric (that we produce using the induction hypothesis) in an appropriately chosen neighbourhood of $\tilde{Y} \cup \tilde{E}$. As before, cover $\tilde{Z}$ with $\tilde\Omega_0$-balls $B_{2r}(x_i)$ of radius $2r<1$ such that $B_{r}^i=B_{r}(x_i)$ also cover it and the following hold.
\begin{enumerate}
\item For every $z\in \tilde{Y} \cup \tilde{E}$, there exists an $i$ such that $B_{r/4}(z)\subset B_{r}^i$ where $B_{r/4}(z)$ is the $\tilde\Omega_0$-ball of radius $\frac{r}{4}$ around $z$.
\item On $B^i_{2r}$, there exist holomorphic coordinates such that $\tilde\Omega_0 = \spbp \phi_{\tilde\Omega_0}^i$, and
\begin{gather}
\vert \phi_{\tilde\Omega_{0}}^i - \vert z \vert^2 \vert < \epsilon r^2, \nonumber \\
\Big (1-\frac{\epsilon}{200} \Big )\spbp \vert z \vert^2 \leq \tilde\Omega_{0} \leq \Big (1+\frac{\epsilon}{200} \Big )\spbp \vert z\vert^2.
\label{conditionsoncoordcharts} 
\end{gather}
\end{enumerate}
Define $$\epsilon_{\ref{secondmaintheorem}}=\min \left (c , \frac{\epsilon c_n r^2}{40}\right ),$$ where as before, $c_n$ is given by \eqref{eq:cn}. Note that this constant is independent of $s$.\\
\indent  The subset $S=\pi(E_{\epsilon_{\ref{secondmaintheorem}}/5}(\Theta)\cup \tilde{Y} \cup \tilde{E})\subset M$ is an analytic subset of maximal dimension $<m$. Hence, by the induction hypothesis, in a neighbourhood $U_{S}\subset M$ of $S$, there exists a constant $0<\epsilon_{1}<< \epsilon$ and a smooth K\"ahler metric $\Omega_{S}=\Omega_0 + \spbp \phi_{S}$ satisfying
\begin{gather}
\frac{1}{(1+4\epsilon_1)^2}\Omega_{S}^{n-1} - \sum_{k=1}^{n} c_k k \chi^{n-k} \Omega_{S}^{k-1}>0.
\label{omegaSsat}
\end{gather}
Here $\epsilon_1<\frac{\epsilon}{3}<1$ is independent of $s$. Note that more precisely, the induction hypothesis, gives the above inequality without the coefficient $(1+4\epsilon_1)^{-2}$, but since the cone condition is pointwise, we can obtain the above inequality, by possibly shrinking $U_S$ and compactness. If $s<s_0'$, then by Proposition \ref{prop:conc-massnonuniform-nondeg}, $S$ contains $\pi(E_{\epsilon_{\ref{secondmaintheorem}}(T_s)}\cup \tilde{Y} \cup \tilde{E})$. Since $\pi^* \chi \leq C_{\chi}  \tilde\Omega_{0}$ for a uniform $C_{\chi}$, we see that Lemma \ref{lem:jackinguptheconecondition} is applicable to $\Omega = \pi^*\Omega_S$ and $\chi=\pi^*\chi$. Therefore, we get a metric $\Omega_{S,s} \in [\pi^*\Omega_S](1+10s)+10s^2 [ \tilde\Omega_{0}]$ satisfying 
\begin{gather}
\frac{1}{(1+2\epsilon_1)^2}\Omega_{S,s}^{n-1} - \sum_{k=1}^n c_k k\chi^{n-k} \Omega_{S,s}^{k-1}>0,
\label{ineq:omegaSsat}
\end{gather}
on $\pi^{-1}(U_S)$. Choose $s$ to be (possibly smaller than earlier) that in addition to all the conditions above, the following condition is met.
\begin{gather}
(1+10s+10s^2)^n-1 <\frac{\epsilon_1}{200}.
\label{ineq:sfixed}
\end{gather}
We do not require any more conditions on $s$ and hence we stick with this choice for the remainder of this section.\\
\indent Let $\phi_{\Theta}$ be a quasi-plurisubharmonic function on $\tilde{Z}$ such that $\Theta=\tilde\Omega_0+\spbp \phi_{\Theta}$. Now denote $\phi_{T_{s}}=(\phi_{\Theta} -\beta' \ln \vert \pi^* \mathcal{S}_Y \vert_{h^N} ^2 +\beta'' \ln \vert s_E \vert^2_{\otimes h_i})(1+10s)$. Define a plurisubharmonic function $\phi^i_{\delta,s}$ as the $\delta$-smoothing of $\phi_{T_{s}}+(1-100\epsilon) \phi^i_{\tilde\Omega_{0}}$. It is clearly well-defined on $\bar{B}_{9r/5}^i$. 
\begin{remark}\label{rem:Aboutstronstrict}
For all sufficiently small $0<\delta<1$ such that there exists a constant coefficient form $\chi_{0}$ in $B_{\delta}(x)$ satisfying $$\chi_{0}\leq \tilde\chi_{s} \leq \chi_{0} \left (1+\frac{\epsilon_1}{1000000n^{10n}} \right ),$$ we see (using the proof of Proposition \ref{prop:conc-massnonuniform} for instance) that the $\phi^i_{\delta,s}$ satisfies the strict cone condition 
\begin{gather}
\frac{1}{(1+2\epsilon_1)^2} (\spbp \phi^i_{\delta,s})(x)^m - \displaystyle \sum_{j=0}^{m-1} b_j \tilde\chi_s^{m-j}(\spbp \phi^i_{\delta,s})^j (x) >0,
\label{strongstrictagain}
\end{gather}
for all $x\in \tilde{Y}^c \cap \tilde{E}^c$ away from say, $4\delta$-neighbourhoods (measured using $\tilde\Omega_{0}$) from $\tilde{Y}\cup \tilde{E}$.
\end{remark}
\indent Choose $\delta$ sufficiently small so that for every $x\in \tilde{Z}$, there exists a $\chi_{0}$ as required in Remark \ref{rem:Aboutstronstrict} on $B_{2\delta}(x)$. 
 We wish to glue (a small modification) of the metric $\Omega_{S,s}$ with the $\phi^i_{\delta,s}$ as before. Choose $\delta$ to be small enough so that on all charts $B_i$, the following holds.
\begin{gather}
\tilde{\Omega}_{s}-(\tilde{\Omega}_{s})_{,\delta} +(1-100\epsilon)\tilde\Omega_{0,\delta}- \tilde\Omega_{0} \geq -150\epsilon \tilde\Omega_{0,\delta},
\label{smallnessofdelta}
\end{gather} 
\indent Now we need to prove that the ``refined" Lelong numbers $\nu(x,\delta)$ of $T$ are somewhat large near $\tilde{Y}\cup \tilde{E}$. Indeed, the singularity of $T$ near $\tilde{E}$ is an explicit logarithmic one. The one near $\tilde{Y}$ is not so explicit. However, Lemma \ref{lem:largelelong} implies that for sufficiently small $\delta$, the $\delta$-Lelong number $\nu^i(x,\delta)$ of $\phi_{T_s} + (1-100\epsilon) \phi^i_{\tilde\Omega_{0}}$ is larger than $5\epsilon_{\ref{secondmaintheorem}}$ in a neighbourhood $O_{\delta} \subset \bar{O}_{\delta} \subset \pi^* U_S \cap \tilde{Z}$ of $\tilde{Y}\cap \tilde{E}$. Choose $\delta$ to be so small that a $10\delta$-neighbourhood (in the $\tilde\Omega_{0}$ sense) of $\tilde{Y} \cup \tilde{E}$ is contained in $O_{\delta}$.\\
\indent We can now decrease $\delta$ further (if necessary) and define $\psi_{s}$ to be the regularised maximum (at the level $\frac{\epsilon r^2}{3})$ over the balls $\bar{B}_{9/5}^i$ of the functions $\phi^i_{\delta,s}-\phi^i_{\tilde\Omega_{0}}$ and $\phi_{S,s} + 3.5 \ln \delta$. Using Proposition 4.1 of \cite{gchen}, we see that $\psi_{s}$ is smooth and coincides with $\phi_{S,s} +3.5 \ln \delta$ in $O_{\delta}$. Moreover,
\begin{gather}
\spbp(\phi^i_{\delta,s}-\phi^i_{\tilde\Omega_{0}})+\tilde{\Omega}_{s} =  \spbp \phi^i_{T_{s},\delta}  +\tilde{\Omega}_{s} + (1-100\epsilon)\spbp \phi^i_{\tilde\Omega_{0},\delta}- \tilde\Omega_{0} \nonumber \\
=(T_{s})_{,\delta} + \tilde{\Omega}_{s}-(\tilde{\Omega}_{s})_{,\delta} +(1-100\epsilon)\tilde\Omega_{0,\delta}- \tilde\Omega_{0} \nonumber \\
\geq (T_{s})_{,\delta}-150\epsilon \tilde\Omega_{0,\delta}.
\end{gather}
  By assumption on the smallness of $\delta$, it is a K\"ahler current throughout $\tilde{Z}$, satisfies the cone condition away from $O_{\delta}$. Hence, $\tilde\Omega_{\psi_{s}}=\tilde\Omega_{s} + \spbp \psi_s$ satisfies  
\begin{gather}
\frac{1}{(1+1.5\epsilon_1)^2} \tilde\Omega_{\psi_{s}}^m - \sum_{j=0}^{m-1} b_j \tilde\chi_s^{m-j} \tilde\Omega_{\psi_{s}}^j>0,
\label{strictconeepsilon1}
\end{gather}
away from $\tilde{E}$ (and in fact, since $\psi$ coincides with $\pi^* \phi_S$ in $O$, these functions descend to smooth functions on $Z$ and satisfy the cone condition on it).   \\ 
\indent Now we follow the proof of Lemma \ref{localmetric}. Extend $\psi$ in the same manner as before to a neighbourhood and consider $\psi_{s}'=\psi_{s} + Cdist(.,\tilde{Z})^2$ in a small neighbourhood $W_1$ of $\tilde{Z}$ in $\tilde{M}$ such that $\tilde{\Omega}_{W_1} = \tilde{\Omega}_{s}+\spbp \psi_{s}'$ satisfies the following condition.
\begin{gather}
\frac{1}{(1+\epsilon_1)^2}n\tilde{\Omega}_{W_1}^{n-1}- \displaystyle \sum_{k=1}^{n-1}c_k k \tilde{\chi}_s^{n-k}\tilde{\Omega}_{W_1}^{k-1}\geq 0.
\label{ineq:w1satisfaction}
\end{gather} 
\indent The construction of Lemma \ref{lem:jackinguptheconecondition} shows the existence of a metric $\tilde{\Omega}_{W_2}$ on a neighbourhood $W_2$ of $E$ satisfying the same inequality as in \ref{ineq:w1satisfaction}. As in \cite{gchen}, we glue (modifications of) these metrics to obtain a metric $\tilde\Omega_{U}$ on a neighbourhood of $\tilde{Z}\cup E$ satisfying 
\begin{gather}
\frac{1}{(1+0.5\epsilon_1)^2}n\tilde{\Omega}_{U}^{n-1}- \displaystyle \sum_{k=1}^{n-1}c_k k \tilde{\chi}_s^{n-k}\tilde{\Omega}_{U}^{k-1}\geq 0.
\label{ineq:usatisfaction}
\end{gather} 
On, $W_1' = \pi(\tilde{U}\cap \tilde{E}^c)$, the function $\psi'_{W_1',s}=\psi'_{s}\circ \pi^{-1}$ is bounded. We define $$\psi''_{s} := \frac{\psi'_{W_1',s}}{1+10s+10s^2}.$$ Since $s$ satisfies \ref{ineq:sfixed}, we are done by the arguments of the proof of Lemma \ref{localmetric}. \\
\indent Given Proposition \ref{prop:nonuniformlocal}, we can follow the proof of Theorem \ref{thm:main} to complete the proof of Theorem \ref{thm:main2}. \qed

\end{document}